\newtheorem{theorem}{Theorem}[section]
\newtheorem{proposition}[theorem]{Proposition}
\newtheorem{claim}[theorem]{Claim}
\newtheorem{lemma}[theorem]{Lemma}
\newtheorem{corollary}[theorem]{Corollary}
\theoremstyle{definition} 
\newtheorem{ques}[theorem]{Question}
\newtheorem{remark}[theorem]{Remark}
\newenvironment{claimproof}{\paragraph{\textit{Proof of the claim}.}}{\hfill$\square$}
\theoremstyle{definition}
\newtheorem{definition}[theorem]{Definition}
\newtheorem{example}[theorem]{Example}
\numberwithin{equation}{section}
\def\R{\mathbb{R}}                              
\def\Z{\mathbb{Z}}                              
\def\C{\mathbb{C}}                              
\def\N{\mathbb{N}}
\def\eps{\epsilon}
\begin{document}
	
	\title[Envelope of truncated tubes and special domains]
	{Envelope of truncated tubes and special domains in higher complex dimensions}

	\author{Suprokash Hazra}
\curraddr{Department of Engineering, Mathematics and Science Education (IMD), Mid Sweden University,
SE-85170 Sundsvall, Sweden} \email{suprokash.hazra@miun.se}	
	
\keywords{Envelopes of holomorphy, truncated tube domains, special domains, schlichtness, analytic continuation.}
\subjclass[2020]{Primary 32D10, 32D26, 32Q02  ; Secondary 32V25, 32E20.}	

	\begin{abstract}
	In this article, we introduce special domains and discuss the geometry of these domains, which includes showing that every pseudoconvex truncated tube domain is a special domain. Next, we prove a theorem for the envelope of special domains in $\C^n ~(n\geq 2)$. Our theorem on special domains is a generalization of a recent result by Jarnicki-Pflug on the envelope of holomorphy of truncated tube domains in $\C^n$. We also establish a result on schlichtness in complex dimension 2, and conclude this article with two  higher-dimensional generalizations of the same result by Jarnicki-Pflug mentioned above.
	\end{abstract}
	
	\maketitle
	
\section{Introduction}\label{sec_1}
For a domain $D\subset \C^n$, its envelope of holomorphy $\mathcal{E}(D)$ is known to be a {maximal} Riemann domain over $\C^n$ with projection $\pi : \mathcal{E}(D) \to \C^n$, such that there is a canonical holomorphic embedding $\iota_D:D\hookrightarrow \mathcal{E}(D)$ satisfying the lifting property $\pi\circ\iota_D=\mbox{id}_D$, and that for all $f \in \mathcal{O}(D)$, the map $\pi^* f:=f\circ(\pi|_{\iota_D(D)})$ has a holomorphic extension to $\mathcal{E}(D)$. For further details, see \cite{JP1} and the references therein. Due to a result of Oka, $\mathcal{E}(D)$ is known to be a Stein manifold. In general, the envelope $\mathcal{E}(D)$ of a domain $D$ need not be schlicht (that is, $\#\pi^{-1}(z)\neq 1$) and thus may be multisheeted over $\C^n$. Moreover, in general, it is neither always evident to determine the number of sheets of the envelope of a domain, nor always simple to comprehend how the geometry of a given domain relates to the geometry of its envelope. Nevertheless, a classical theorem due to Bochner (\cite{boch}) shows that for a domain $X\subset \R^n$, the envelope of holomorphy of the \textit{complete Bochner tube} $X+i\R^n$ is the tube $  \textsf{ch}(X) + i\R^n$ over the convex hull $\textsf{ch}(X)$, and hence the envelope is schlicht. Recall that a domain is called a \textit{truncated tube domain} (or simply a \textit{tube domain}) if it is of the form $X+iY$, where $X, Y \subset \mathbb{R}^n$ are arbitrary domains. Although the geometry of pseudoconvex tube domains is relatively well understood (see  \cite{nogu}, \cite{egm}), that of non-pseudoconvex domains remains much undeveloped. Nevertheless, recently, the following result for the non-pseudoconvex truncated tube was obtained.

\begin{theorem}[Jarnicki-Pflug, \cite{JP2}]\label{JP2theorem}
For $0\leq r_1 < r_2$ and $r_3>0$, the envelope of holomorphy of the truncated tube domain $$\{x \in \R^n: r_1<||x||<r_2\}+i\{y \in \R^n:||y||<r_3\} \subset \C^n; ~ n\geq 2,$$ is given by $$\{x+iy\in \C^n: ||x||<r_2, ||y||<r_3,  ||y||^2<||x||^2-(r_1^2-r_3^2)\}.$$ In particular, the envelope is schlicht.
\end{theorem}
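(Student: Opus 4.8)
The plan is to identify the candidate envelope $G:=\{\|x\|<r_2,\ \|y\|<r_3,\ \|x\|^2-\|y\|^2>r_1^2-r_3^2\}$ as a schlicht domain of holomorphy and then to show that every $f\in\mathcal{O}(D)$, where $D$ denotes the given truncated tube, continues holomorphically to $G$. The unifying device is the entire function $q(z)=z_1^2+\cdots+z_n^2$, for which $\operatorname{Re}q(z)=\|x\|^2-\|y\|^2$ and $\operatorname{Im}q(z)=2\langle x,y\rangle$, together with the identities $\|x\|^2=\tfrac12\bigl(|z|^2+\operatorname{Re}q(z)\bigr)$ and $\|y\|^2=\tfrac12\bigl(|z|^2-\operatorname{Re}q(z)\bigr)$. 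Thus $\|x\|^2$ and $\|y\|^2$ are strictly plurisubharmonic and $\operatorname{Re}q$ is pluriharmonic, and the three inequalities cutting out $G$ are sublevel sets of the plurisubharmonic functions $\|x\|^2-r_2^2$, $\|y\|^2-r_3^2$ and $(r_1^2-r_3^2)-\operatorname{Re}q$. Hence $G$ is pseudoconvex, i.e.\ a schlicht domain of holomorphy, once one checks it is a domain (both $D$ and $G$ are connected for $n\geq 2$, the annulus $\{r_1<\|x\|<r_2\}$ and the region $\{\|x\|^2-\|y\|^2>r_1^2-r_3^2\}$ being connected in this range).

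Granting that $G$ is a schlicht domain of holomorphy, the theorem reduces to the single extension statement that every $f\in\mathcal{O}(D)$ continues to $G$. Indeed this gives $\mathcal{E}(D)\supseteq G$, while $\mathcal{E}(D)\subseteq G$ follows from maximality: since $G$ is a domain of holomorphy it carries some $g\in\mathcal{O}(G)$ singular at every boundary point, and by the identity theorem its restriction $g|_D$ cannot be continued past $\partial G$, so no point over $\C^n\setminus\overline{G}$ can lie in $\mathcal{E}(D)$; schlichtness of the envelope is then immediate from that of $G$. The same circle of ideas explains, a posteriori, why $G$ has exactly this shape: along any closed analytic disc whose boundary lies in $D$, the subharmonic functions $\|x\|^2,\|y\|^2$ and the harmonic function $\operatorname{Re}q$ obey the maximum (respectively minimum) principle, so the three defining inequalities, valid on the boundary circle $\subset D$, propagate to the whole disc. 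This already yields the easy inclusion $\mathcal{E}(D)\subseteq G$ and simultaneously predicts that the truncation $\|y\|<r_3$ is precisely what converts Bochner's constraint $\|x\|<r_2$ into the sharp inequality $\|x\|^2-\|y\|^2>r_1^2-r_3^2$.

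The heart of the argument, and the step I expect to be the main obstacle, is the converse filling $G\subseteq\mathcal{E}(D)$, carried out by the continuity principle. Here I would exploit that $D$ and $G$ are invariant under the diagonal holomorphic action of $O(n,\R)$ on $\C^n$ (preserving $\|x\|$, $\|y\|$ and $q$): given a target $z^{0}\in G\setminus D$ lying in the filled hole $\{\|x\|\leq r_1\}$, one may rotate it into the complex $2$-plane $\C e_1\oplus\C e_2$ and then construct a continuous family of closed analytic discs, with all boundaries contained in the spherical shell tube $D$, sweeping from a disc lying inside $D$ out to one passing through $z^{0}$. The discs should realize the classical Bochner filling of the convex hull of the shell in the real directions, while the radius available in the imaginary directions is capped by $r_3$; the delicate bookkeeping is to keep each boundary circle in $D$ throughout and to verify that the points so reached exhaust exactly $\{\|x\|<r_2\}\cap\{\|y\|<r_3\}\cap\{\operatorname{Re}q>r_1^2-r_3^2\}$ and nothing more. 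By the continuity principle each such disc lies in $\mathcal{E}(D)$, giving $G\subseteq\mathcal{E}(D)$.

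The final point to secure is that the continuation is single-valued, equivalently that the envelope is schlicht as claimed. Since $D$ and $G$ are in general not simply connected (for $n=2$ both carry a $\Z$ in $\pi_1$, coming from the annulus $\{r_1<\|x\|<r_2\}$ and from $\{\sqrt{r_1^2-r_3^2}<\|x\|<r_2\}$ when $r_1>r_3$), one must check that the extension has trivial monodromy. I would verify that the inclusion $D\hookrightarrow G$ induces an isomorphism on fundamental groups, so that a function single-valued on $D$ stays single-valued on $G$; this is straightforward for $n\geq 3$, where the relevant regions are simply connected, but requires the dedicated low-dimensional analysis for $n=2$—precisely the schlichtness phenomenon recorded in the theorem's last sentence.
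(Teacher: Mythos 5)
There is a genuine gap: your write-up is a plan rather than a proof, and the two steps it defers are precisely the ones that carry all the difficulty. First, the inclusion $G\subseteq\mathcal{E}(D)$ is reduced to ``construct a continuous family of closed analytic discs, with all boundaries contained in $D$, sweeping from a disc lying inside $D$ out to one passing through $z^{0}$,'' together with the ``delicate bookkeeping'' of keeping every boundary circle inside the thin shell $\{r_1<\|x\|<r_2\}+i\{\|y\|<r_3\}$ while the disc interiors exhaust exactly $G$. No such family is exhibited, and producing one is nontrivial (it is essentially the content of Iva\v skovi\v c's lemma used in the original Jarnicki--Pflug argument). Second, the schlichtness step is not secured by your monodromy heuristic: an isomorphism $\pi_1(D)\to\pi_1(G)$ does not by itself imply single-valuedness of the extension, because multisheetedness of an envelope is not a covering-space phenomenon governed by $\pi_1$ of the base domain --- the paper itself cites a truncated tube in $\C^2$ diffeomorphic to the $4$-ball (hence simply connected) whose envelope has infinitely many sheets. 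To run a monodromy argument you must first prove that every $f\in\mathcal{O}(D)$ continues \emph{along every path} in $G$, which again requires the unconstructed disc families; so your claim that the case $n\geq3$ is ``straightforward'' is not justified as stated.

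For contrast, the paper's alternative proof avoids both issues at once. It takes the candidate envelope $D'=\{\|x\|^2-\|y\|^2>r_1^2-r_3^2\}\cap(B_{r_2}(0)+iB_{r_3}(0))$ (your $G$), isolates the Levi-flat boundary piece $E=\{\|x\|^2-\|y\|^2=r_1^2-r_3^2\}\cap\overline{D'}$, and shows $E$ is $\mathcal{O}(\overline{D'})$-convex by evaluating the single entire function $\exp(\sum_j z_j^2)$, whose modulus is $\exp(\|x\|^2-\|y\|^2)$. Stout's global analytic continuation theorem (Theorem \ref{stout_theo}) then yields directly that $\mathcal{E}(\Omega)=D'$ for the truncated tube $\Omega$, with schlichtness built into the conclusion. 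Your observation that $\operatorname{Re}(z_1^2+\cdots+z_n^2)=\|x\|^2-\|y\|^2$ is pluriharmonic is exactly the right structural input --- it is what makes $E$ Levi-flat and $\mathcal{O}(\overline{D'})$-convex --- but you deploy it only to verify pseudoconvexity of $G$ and to get the easy inclusion, rather than to trigger a global extension theorem that would finish the proof.
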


Recently, S. Hazra and E. Porten proved that the envelope of truncated tube domains need not be schlicht (\cite{haz-por}). To be specific, the authors have constructed a truncated tube domain in $\C^2$ diffeomorphic to the $4$-ball such that the envelope has infinitely many sheets lying uniformly over
a neighborhood of a circle embedded in $\C^2$. In addition, they established a sufficient condition in 
$\C^2,$ which yields the following theorem.

\begin{theorem}[Hazra-Porten, \cite{haz-por}]\label{haz-por_sch_2}
If $X\subset \R^2$ be a convex domain with finitely many strictly convex holes, and $Y \subset \R^2$ be a convex domain, then the envelope of holomorphy of the truncated tube $X+ iY$ is schlicht.
\end{theorem}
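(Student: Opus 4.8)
The plan is to produce the envelope concretely as a schlicht pseudoconvex domain rather than to track sheets abstractly. Recall the standard reduction (see \cite{JP1}): if one can exhibit a \emph{schlicht} pseudoconvex domain $\Omega\subseteq\C^2$ with $X+iY\subseteq\Omega$ to which every $f\in\mathcal{O}(X+iY)$ extends holomorphically, then the restriction map $\mathcal{O}(\Omega)\to\mathcal{O}(X+iY)$ is a bijection (injectivity by the identity theorem, surjectivity by the extension), so that $\mathcal{E}(X+iY)=\mathcal{E}(\Omega)=\Omega$, the last equality because a schlicht pseudoconvex domain is its own envelope. Thus everything reduces to (a) writing down the correct $\Omega$, (b) checking pseudoconvexity, and (c) proving the continuation. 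When the exact $\Omega$ is out of reach, it is enough instead to show directly that the continuation furnished by the continuity principle is \emph{single-valued} over its projection, since an envelope is schlicht precisely when the analytic continuation of every function is single-valued; this is the true engine behind the theorem.

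To write down $\Omega$ I would fill the holes one at a time, which is legitimate since they are finitely many and pairwise disjoint, so each lies in a neighbourhood where $X$ is a convex domain with a single strictly convex hole $K_j$. The model for the filling is Theorem \ref{JP2theorem}: for a round hole $\{\|x\|\le r_1\}$ and round $Y=\{\|y\|<r_3\}$ the fillable region is cut out by $\|x\|^2-\|y\|^2>r_1^2-r_3^2$, i.e. by $\mathrm{Re}(z_1^2+z_2^2)>r_1^2-r_3^2$, which is the real part of the holomorphic extension of the quadratic defining function $\|x\|^2$ of the hole, shifted by the extent $\sup_{y\in Y}\|y\|^2$ of $Y$. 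Guided by this, for a general strictly convex hole $K_j$ with convex defining function $\rho_j$ I would complexify $\rho_j$ and impose, per supporting direction, a condition $\mathrm{Re}\,\tilde\rho_j(z)>-h_Y(\cdot)$, where $h_Y$ records the width of $Y$ in that direction; then $\Omega$ is $\bigl(\mathrm{ch}(X)+iY\bigr)$ intersected with these conditions, one family per hole. Strict convexity guarantees the supporting lines of $\partial K_j$ rotate monotonically, so these conditions are parametrised by a circle of directions without folding.

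Step (c) is where the real work lies, and I would establish it with the Kontinuit\"atssatz applied to a one-parameter family of analytic disks adapted to the tube. In the round model of Theorem \ref{JP2theorem} the extension is transparent because the rotational symmetry of the hole and of $Y$ complexifies to a $\C^*$-action preserving $z_1^2+z_2^2$; restricting $f$ to an orbit yields a Laurent situation on an annulus whose inner hole is filled, and the reach is exactly $\mathrm{Re}(z_1^2+z_2^2)>r_1^2-r_3^2$. For a general strictly convex hole there is no such symmetry, so I would instead attach analytic disks along the two segments into which a real line through the hole cuts $X$, complexify the intervening chord in the imaginary directions permitted by $Y$, and let the line sweep inward from a supporting line of $K_j$. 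The point of strict convexity is that this sweep is a \emph{monotone, fold-free} foliation of the fillable region by disks whose boundaries remain in $X+iY$, so the continuity principle produces a single-valued extension over each hole with no room to branch. This is exactly the mechanism that breaks for non-strictly-convex holes, where a flat or reflex arc of $\partial K$ lets the sweeping disks wind and yields the infinitely-sheeted envelope of \cite{haz-por}; strict convexity is precisely the hypothesis that excludes this.

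It remains to verify pseudoconvexity of $\Omega$: the tube $\mathrm{ch}(X)+iY$ is convex, hence pseudoconvex, while each filling condition $\mathrm{Re}\,\tilde\rho_j(z)>\text{const}$ is a superlevel set of a pluriharmonic function in the round case and so has Levi-flat boundary; the only possible loss of pseudoconvexity is along the seams where a filling meets $\partial(\mathrm{ch}(X)+iY)$ or where two holes interact, and there I would use strict convexity and disjointness of the holes to check the Levi condition on each smooth piece together with an edge-of-the-wedge argument at the seams. The main obstacle, I expect, is carrying out step (c) honestly for a \emph{general} strictly convex hole and a \emph{general} convex $Y$: Theorem \ref{JP2theorem} pins the reach only in the round--round case, and for non-quadratic $\rho_j$ the complexified defining function is no longer pluriharmonic, so identifying the precise fillable region and matching it to a pseudoconvex $\Omega$ is delicate. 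The cleanest route is probably a comparison argument—trapping each strictly convex hole between round-hole models and $Y$ between balls, transporting the round-case extension of Theorem \ref{JP2theorem}, and gluing the local single-valued extensions into a global schlicht one—so that schlichtness is obtained even where the exact shape of the envelope is not computed.
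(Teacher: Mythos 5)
Your proposal is a programme rather than a proof: the decisive step (your step (c), single-valuedness of the continuation across each hole) is asserted, not established. The claim that strict convexity makes the sweep of analytic disks ``monotone and fold-free, with no room to branch'' is precisely the content of the theorem, and nothing in the proposal converts it into an argument; the Kontinuit\"atssatz by itself only produces a (possibly multivalued) continuation on a Riemann domain, and ruling out monodromy is the whole difficulty --- as the multisheeted example of \cite{haz-por} shows, this cannot be waved through. Your fallback devices also fail concretely. A general strictly convex hole need not have a real-analytic defining function, so the complexification $\tilde\rho_j$ you want to use to cut out $\Omega$ does not exist, and you yourself note that even when it does it is not pluriharmonic, which destroys the Levi-flatness you need for pseudoconvexity of the seams. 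The closing ``comparison argument'' (trapping $K_j$ between round models and transporting Theorem \ref{JP2theorem}) does not work either: enlarging or shrinking the hole changes the domain non-monotonically from the point of view of envelopes, and schlichtness is not inherited under such sandwiching.

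The route actually used (Lemma \ref{cvx-lem} and Theorem \ref{schlicht2d}, following \cite{haz-por}) sidesteps every one of these issues: one does not describe the envelope by complexified defining functions at all, but identifies it as $D'\setminus\widehat{H}$ with $H=H_x+ibY$, obtains the holomorphic extension from J\"oricke's CR-extension theorem (Theorem \ref{jor2}) after checking that $H$ is $CR$-convex (which uses only the convexity of $H_x+i\overline{Y}$), gets pseudoconvexity of $D'\setminus\widehat{H}$ for free from the Rosay--Stout lemma (Lemma \ref{ros-st}), and handles finitely many holes by an exhaustion and Behnke--Stein. If you want to salvage your approach, the missing ingredient is exactly a replacement for the J\"oricke/Rosay--Stout step: a proof that the continuation along your disk family is single-valued and that its target is pseudoconvex, neither of which your sketch supplies.
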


In \cite[Remark 4.2.(a)]{haz-por}, the authors also mentioned that it is unknown if stronger results can be obtained for truncated Bochner tubes. In particular, this means it is uncertain if Theorem \ref{haz-por_sch_2} continues to hold in $\C^n$ for $n\geq 3$. We would like to address this as a modified question:
\begin{ques}
Let $X,Y$ be two convex domains in $\R^n$ with $n\geq 3$. Denote $K=K_x+ibY$, where $K_x\subset \R^n$ is a compact convex set. Does the envelope of holomorphy of the truncated tube $(X\setminus K_x)+iY$ always schlicht? 
\end{ques}

Observe that in $\C^2$, proving  the  pseudoconvexity in Theorem \ref{haz-por_sch_2} follows immediately from [Lemma \ref{ros-st}, Rosay-Stout], which doesn't always hold in higher dimensions. However, with an additional assumption of $b\widehat{K}$ being Levi-flat,  Theorem \ref{haz-por_sch_2} extends to $\C^n$ for $n\geq 3$ (see \cite[Theorem 3.1]{thie}). It is worth mentioning that in Theorem \ref{JP2theorem}, the condition of $b\widehat{K}$ being Levi-flat was satisfied (see \cite[Remark 4.4.(a)]{haz-por} for details). Note that, as in general, rational hull doesn't coincide with the polynomial hull, it is natural to ask about the situation of Theorem \ref{haz-por_sch_2} when $b\widehat{K}_{\mathcal{R}}$ is Levi-flat instead of $b\widehat{K}$. On the other hand since it is proved in \cite[Example 4.3]{haz-por} that convexity assumption in Theorem \ref{haz-por_sch_2} cannot be dropped, it opens up a possibiliy to ask if Theorem \ref{haz-por_sch_2} holds in $\C^2$ when $X$ has finitely many holes with convex $\mathcal{C}^2$-boundary, instead of the strictly convex holes. Finally, it is worth considering the following question.

\begin{ques}[Jarnicki/Pflug, Remark 3, \cite{JP2}]\label{ques_3}
Does every generalized tube domain with an additional geometric property (e.g. substitute the norm $||.||$ in Theorem \ref{JP2theorem} by another $\R$-norm) have a univalent envelope of holomorphy? 
\end{ques}

Surprisingly, only a very few results are known in the literature regarding the schlichtness of the envelope for truncated tube domains. Therefore, the above discussion and questions suggest that it is necessary to develop sufficient conditions that guarantee the schlichtness of the envelope for truncated tube domains in higher complex dimensions. Motivated by this aim, we now state the main focuses and outline the results of this article.\\

\noindent\textbf{(i)} We start by establishing Theorem \ref{schlicht2d}, which is a variant of Theorem \ref{haz-por_sch_2} in $\C^2$ when $X$ has finitely many holes with convex $\mathcal{C}^2$-boundary, instead of the strictly convex holes. The result is obtained by considering compacts that are CR-convex.\\

\noindent \textbf{(ii)} Next, we introduce the notion of a \textit{special domain} (Definition \ref{special_domain}) and establish some of their geometric properties. Our class of special domains includes, in particular, the pseudoconvex truncated tube domains. A special domain $S_{\alpha}$ is obtained by first considering $\alpha$ as a \textit{special barrier} with special augmenting function $f$, and later removing a \textit{compact fence} $K_{\alpha}$ given by $\alpha+f$ from the \textit{convex initial domain} $X+iY$ (Definition [\ref{special_barrier}, \ref{compact-fence}, \ref{convex-initial-domain}]). We provide a sufficient condition for the schlichtness of $\mathcal{E}(S_{\alpha})$ and prove Theorem \ref{1st_theo}, which extends Theorem \ref{JP2theorem} in $\C^n$, for $n\geq 2.$\\

\noindent \textbf{(iii)} In Theorem \ref{2nd_theo}, we provide a sufficient condition to ensure schlichtness of a truncated tube domain in higher complex dimensions. Our result extends Theorem \ref{JP2theorem} in $\C^n$ with $n\geq 2$, by applying Stout's theorem on analytic continuation. \\

\noindent\textbf{(iv)}\label{part:iv} In Section 6, we show that under additional assumptions, the schlichtness of the envelope in dimension $n\geq 3$ is guaranteed provided that $b\widehat{K}_{\mathcal{R}}$ is Levi-flat (Theorem \ref{3rd_theo}). Our theorem extends Theorem \ref{JP2theorem} in $\C^n$ with $n\geq 3$.\\

Theorem \ref{schlicht2d} may be regarded as a minor refinement, and an alternative  variant of Theorem \ref{haz-por_sch_2}. Apart from our primary focuses mentioned above, Section \ref{sec_2} is committed to preliminaries and the review of classical results, while in Section \ref{sec_3}, we revisit the Jarnicki-Pflug theorem and provide an easy and alternative proof of Theorem \ref{JP2theorem}. Finally, Section \ref{sec_4} is dedicated to the geometry of compact fences and special domains, including their examples and properties.


\section{Preliminaries and some classical results}\label{sec_2}
In this section, we briefly recall some definitions and results, which will be used in the subsequent sections. Throughout we write $x=(x_1,x_2,\ldots,x_n)\in \R^n$, $z=(z_1,z_2,\ldots,z_n)\in \C^n$, and for $q\geq 1$, $||x||_q:=(\sum_{j=1}^{n}|x_j|^q)^{1/q}$, unless specified otherwise. When $q=2$, we omit the superscript and only write $||x||$ to denote the 2-norm of $x$. For two sets $A,B$ in any metric space, denote $bA$, $\textsf{int}_AB$, and $\textsf{dist}(A,B)$ as respectively the boundary of $A$, the interior of $B$ relative to $A$, and the distance from $A$ to $B$. Recall that a point $p$ of an analytic set $X\subset \C^n$ is called \textit{regular} (denoted by $p\in \textsf{reg}(X)$) if $p$ has an open neighborhood
$U$ such that $X \cap U$ is a complex manifold. The \textit{singular set} $\textsf{sing}(X)$ is defined as $\textsf{sing}(X):=X\setminus \textsf{reg}(X).$ A critical point $p$ of a smooth real-valued function $f$ on an $n$-dimensional smooth manifold $M$ is called \textit{non-degenerate} if and only if with respect to a local coordinate system $(x_1,\ldots,x_n)$ in a neighborhood of $p$, the following matrix 
\begin{equation}\label{hessian}
	\Big(\frac{\partial^2f}{\partial {x_j}\partial x_k}(p)\Big)
\end{equation}

\noindent is non-singular. A function $f : M \to \R$ is called a \textit{Morse function} if its critical points are non-degenerate.
For more details on Morse theory, see \cite{milnor}.

\begin{definition}[page 120, \cite{kra}]\label{def-st-convexity}
For a domain $G \subset \R^n$ with smooth boundary, and a locally defining function $r$ near a neighborhood $U$ of a point $p \in bG$ with $G \cap U\subset\{x\in \R^n : r(x)<0\}$, the notion of \textit{strict convexity} at $p \in bG$ is equivalent to the following condition $$\sum \limits_{j,k =1}^{n} {a_j} a_k  \frac{\partial^2r}{\partial {x_j}\partial x_k}(p) > 0, \text{ for all } X_p = \sum \limits_{j =1}^{n} a_j \frac{\partial}{\partial x_j}\Bigr\rvert_p \in T_{p}(bG).$$
A domain $G$ is strictly convex if it is strictly convex at all $p \in bG$.	
\end{definition}
 It is elementary to verify that every bounded strictly convex domain is convex, but not conversely (e.g. $\{(x_1,x_2)\in \R^2: x_1<1\}$). \\

\begin{definition}[CR manifold and CR function, \cite{bogg}]
	Let ${J :T_p \C^n \to T_p \C^n}$ be a linear map with $J^2=-I$, and let $M$ be a smooth submanifold of $\C^n$. If the real dimension of  $T_p^JM :=T_pM \cap J(T_pM)$ is independent of $p\in M$, we call $M$ a \textit{CR manifold}. A function $f:M\to \C$ is called a \textit{CR function} if for all $p\in M$ and for all $\overline{L_p}\in T_p^{(0,1)}M$ it implies $\overline{L_p}f=0$, where
	$$T_p^{(0,1)}\C^n:= \textsf{span}_{\C}\big\{\frac{\partial}{\partial \overline{z_j}}:1\leq j\leq n\big\};~ \text{ and }~ T_p^{(0,1)}M:= (\C \otimes T_pM)\cap T_p^{(0,1)}\C^n. $$
\end{definition}

\begin{definition}[Pseudoconvexity and Levi-flatness, \cite{kra}]
Let $\Omega\subset \C^n$ be an open set with $\mathcal{C}^2$-boundary such that $\rho$ is a locally defining function for $b\Omega$ at $p\in b\Omega$ with $\rho|_{\Omega}(\cdot)<0$. For an element $v_p=\sum_{j =1}^{n} w_j \frac{\partial}{\partial z_j}\Big\rvert_p \in T_{p}^{(1,0)}(b\Omega)$ the following sesquilinear form $$\mathcal{L}(v_p, v_p):=\sum \limits_{j,k =1}^{n} {w_j}\overline{w_k} \frac{\partial^2\rho}{\partial {z_j}\partial \overline{z}_k} (p)$$ is called the \textit{Levi form} at $p.$ The set $ \Omega$ is called  \textit{pseudoconvex} (resp. \textit{Levi flat}) at $p\in b\Omega$ if the Levi form is positive semidefinite (resp. zero). A smooth real hypersurface $M\subset \C^n$ is called Levi-flat if its Levi form vanishes at every point of $M$ (see \cite{bogg} for details). 
\end{definition}

\begin{theorem}[\cite{bogg}, chapter 10, Theorem 1]\label{foli}
	Suppose $(M,T^JM)$ be a Levi flat CR structure. Then $M$ is locally foliated by complex manifolds whose complexified tangent bundle is given by $(T^JM\oplus \overline{T^JM}).$
\end{theorem}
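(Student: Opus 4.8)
The plan is to read Theorem~\ref{foli} as an integrability statement and to deduce it from the (real) Frobenius theorem applied to the distribution $T^JM$. First I would complexify: setting $\mathcal{V}:=T^{(1,0)}M\oplus T^{(0,1)}M=\C\otimes_{\R}T^JM$, the assertion that $M$ is locally foliated by complex submanifolds with complexified tangent bundle $T^{(1,0)}M\oplus T^{(0,1)}M$ (the two summands being written $T^JM$ and $\overline{T^JM}$ in the statement) is equivalent to saying that the real distribution $T^JM\subset TM$ is involutive and that each of its integral leaves is $J$-invariant. Since $T^JM$ is $J$-invariant by construction, the entire problem reduces to proving that $T^JM$ is closed under the Lie bracket.

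To establish involutivity I would collect two ingredients. The first is the CR integrability of an embedded submanifold of $\C^n$: because $T^{(0,1)}\C^n$ is spanned by the commuting fields $\partial/\partial\overline{z_j}$ it is involutive, and intersecting with $\C\otimes TM$ gives $[T^{(0,1)}M,T^{(0,1)}M]\subseteq T^{(0,1)}M$; conjugating yields $[T^{(1,0)}M,T^{(1,0)}M]\subseteq T^{(1,0)}M$. The second ingredient, and the only place where the hypothesis enters, is the interpretation of the Levi form as the obstruction to involutivity of $\mathcal{V}$: for local sections $L,L'$ of $T^{(1,0)}M$, the component of the bracket $[L,\overline{L'}]$ transverse to $\mathcal{V}$ is, up to a nonzero constant, $\mathcal{L}(L,L')$. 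The Levi-flat hypothesis forces this component to vanish, so $[L,\overline{L'}]\in\mathcal{V}$. Combining the three bracket relations shows that $\mathcal{V}$ is involutive. Finally, since $\mathcal{V}=\overline{\mathcal{V}}$ is invariant under complex conjugation and equals the complexification of $T^JM$, the involutivity of $\mathcal{V}$ descends to involutivity of the real distribution $T^JM$.

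With involutivity in hand I would invoke the Frobenius theorem to obtain, near each point of $M$, a foliation by real integral submanifolds $N$ with $T_pN=T^J_pM$ for $p\in N$. Each such leaf carries a $J$-invariant tangent space of real dimension $2m$, where $m$ is the CR dimension of $M$, and hence is a CR submanifold of $\C^n$ of CR codimension zero; its induced CR structure is automatically integrable, being inherited from $\C^n$, whence $N$ is a complex submanifold by the embedded case of the Newlander--Nirenberg theorem. Its complexified tangent bundle is then exactly $\mathcal{V}|_N=(T^JM\oplus\overline{T^JM})|_N$, as claimed.

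I expect the genuine technical content, and the only step that truly uses the hypothesis, to be the identification of the Levi form with the transverse component of the mixed bracket $[L,\overline{L'}]$; once that bookkeeping is set up correctly, the involutivity and the passage to a foliation via Frobenius are essentially formal. A secondary point that requires care is verifying that the conjugation-invariance of $\mathcal{V}$ legitimately transfers the complexified involutivity back to the real distribution $T^JM$, so that the ordinary real Frobenius theorem applies rather than a complex variant of it.
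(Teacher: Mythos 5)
The paper does not prove this statement; it is quoted verbatim from Boggess (\cite{bogg}, Chapter 10, Theorem 1) and used as a black box. Your argument --- identifying the Levi form with the transverse component of the mixed bracket $[L,\overline{L'}]$, deducing involutivity of $T^JM$ from Levi-flatness together with the CR integrability $[T^{(0,1)}M,T^{(0,1)}M]\subseteq T^{(0,1)}M$, and then applying Frobenius and the fact that a real submanifold of $\C^n$ with $J$-invariant tangent spaces is complex --- is correct and is essentially the standard textbook proof given in the cited reference, so there is nothing to add.
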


Recall that a complex manifold is called \textit{Stein} if it is holomorphically separable (that is, any two distinct points are separated by a holomorphic function) and holomorphically convex (that is, the holomorphic convex hull of a compact set is compact). Next we state the following maximum-modulus principle.

\begin{theorem}[Theorem 16, Chap. III, \cite{gun-ros}]\label{gun-ross}
Let $V$ be a connected subvariety of the domain $G\subset \C^n$ with $n\geq 2$. If the modulus of any $f\in \mathcal{O}(G)$ attains a maximum in $V$, then $f$ is con­stant on $V$.	
\end{theorem}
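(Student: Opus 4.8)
The plan is to show that the subset of $V$ where $|f|$ realizes its maximum is both open and closed, and then to invoke connectedness. Write $M:=\max_{V}|f|$, which exists by hypothesis, and put $A:=\{z\in V: |f(z)|=M\}$. By continuity $A$ is closed and nonempty, so if I can prove that $A$ is open in $V$ it will follow that $A=V$, i.e.\ $|f|\equiv M$ on $V$. The whole difficulty is therefore concentrated in a \emph{local maximum principle}: if $|f|$ attains a local maximum at a point $p\in V$, then $|f|$ is constant near $p$ on $V$.

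At a regular point $p\in\textsf{reg}(V)$ this is immediate, since there $V$ is a complex manifold and $f|_V$ is holomorphic, so the classical maximum modulus principle applies. The essential case is $p\in\textsf{sing}(V)$, and here I would use the local parametrization theorem for analytic sets. After shrinking to a neighborhood of $p$ and decomposing into irreducible germs $V_1,\dots,V_k$, each $d_j$-dimensional branch $V_j$ admits, after a linear change of coordinates $z=(z',z'')\in\C^{d_j}\times\C^{n-d_j}$, a proper $b$-sheeted branched covering $\pi:V_j\to\Delta'\subset\C^{d_j}$, $\pi(z',z'')=z'$, with the central fibre reduced to the single point $p$ and branch locus a proper analytic subset $\Sigma\subset\Delta'$. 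Over $\Delta'\setminus\Sigma$ the $b$ values of $f$ on the fibre are well defined, and their product $\sigma(w):=\prod_{i=1}^{b} f\big(z^{(i)}(w)\big)$ is a bounded holomorphic function (bounded because $|f|\le M$); by Riemann's removable singularity theorem it extends holomorphically across $\Sigma$ to $\sigma\in\mathcal{O}(\Delta')$.

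The key computation is then elementary. As $w\to 0$ every sheet $z^{(i)}(w)$ tends to $p$, so $\sigma(0)=f(p)^{b}$ and $|\sigma(0)|=M^{b}$, while $|\sigma(w)|\le M^{b}$ throughout $\Delta'$. Thus $|\sigma|$ attains an interior maximum at the centre of the polydisc, and the ordinary maximum modulus principle forces $|\sigma|\equiv M^{b}$. Since each factor satisfies $|f(z^{(i)}(w))|\le M$, equality in $\prod_i |f(z^{(i)}(w))|=M^{b}$ can hold only if $|f|=M$ on every sheet; hence $|f|\equiv M$ on $V_j$, and therefore on $V$ near $p$. This proves that $A$ is open and, by connectedness, that $|f|\equiv M$ on $V$.

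Finally, to upgrade $|f|\equiv M$ to constancy of $f$: if $M=0$ then $f\equiv0$; if $M>0$, then on each irreducible branch the regular locus is connected, and a holomorphic function of constant modulus on a connected manifold is constant there, so $f$ is constant on the whole branch and equals $f(q)$ on every branch through a given $q\in V$. Thus $f$ is locally constant on $V$, and connectedness of $V$ gives $f\equiv f(p)$. I expect the main obstacle to be the singular-point analysis --- specifically arranging the branched covering so that the central fibre is a single point and controlling $f$ across the branch locus --- rather than the soft topological bookkeeping.
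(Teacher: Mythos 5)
Your proof is correct. Note that the paper itself gives no proof of this statement --- it is imported verbatim as Theorem~16 of Chapter~III of Gunning--Rossi --- and your argument (open--closed decomposition of the maximum set, the classical maximum principle at regular points, and at singular points the local parametrization theorem together with the bounded ``norm'' $\sigma(w)=\prod_i f(z^{(i)}(w))$ extended across the branch locus by Riemann's removable singularity theorem) is essentially the standard proof from that reference, carried out completely and with the key equality case $\prod_i|f(z^{(i)}(w))|=M^b$ handled correctly.
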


\subsection{Different hulls and some classical results}
For a bounded domain ${\Omega \subset \C^n}$, let $\mathcal{O}(\Omega)$, $\mathsf{psh}(\Omega)$ and $\mathcal{A}(\Omega)$ respectively denote the set of all holomorphic functions defined on $\Omega$, the set of plurisubharmonic functions on $\Omega$, and the set of all $f\in \mathcal{O}(\Omega)$ that are continuous up to $\overline{\Omega}$. 
Next we use the following notations to describe the hulls (see \cite{jor2}, \cite{stout} for details).
\vspace{.2cm}
\begin{itemize} 
\item [(i)] ${\widehat{K}}_{\mathcal{A}(\Omega)}:=\{z\in\C^n:|f(z)|\leq\max_{K}|f|,  \mbox{ for all } f\in \mathcal{A}(\Omega) \}.$
\item [(ii)] $\widehat{K}_{\mathsf{psh}(\Omega)}:=\{z\in \C^n :\phi(z)\leq \sup_K \phi, \mbox{ for all } {\phi \in \mathsf{psh}(\Omega)}\}.$
\item [(iii)]
$\widehat{K}:=\{z\in\C^n:|p(z)|\leq\max_{K}|p|,\text{ for all } p \in \C[z_1,z_2,\dots, z_n]\}.$
\item [(iv)]  $\widehat{K}_{\mathcal{R}}:=\{z \in \C^n: p^{-1}(0)\cap K\neq \emptyset, \forall p \in \C[z_1,z_2,\dots, z_n] \text{ with } p(z) = 0\}.$
\end{itemize}
\vspace{.2cm}
The hulls mentioned in (i), (ii), (iii), and (iv) are respectively called the {$\mathcal{A}(\Omega)$-hull},  $\mathsf{psh}$-hull, polynomially convex hull, and the rationally convex hull. A compact set $K$ is said to be respectively \textit{polynomially convex} or \textit{rationally convex} if $\widehat{K}=K$, and $\widehat{K}_{\mathcal{R}}=K$. \\

The next lemma is a special case of  \cite[Theorem 2.1, p. 365]{slowd}.

\begin{lemma}[Rosay-Stout, \cite{ros-st}]\label{ros-st}
Let $K \subset \C^2$ be a compact set, and let $\Omega \subset \C^2$ be a pseudoconvex domain. If $\Omega \cap K=\emptyset$, then the set $\Omega \setminus \widehat{K}$ is pseudoconvex.
\end{lemma}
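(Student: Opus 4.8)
The plan is to verify pseudoconvexity of $D:=\Omega\setminus\widehat{K}$ through the continuity principle (Kontinuit\"atssatz), using that a domain satisfying it is pseudoconvex: it suffices to show that for every continuous family of analytic discs $\varphi_t:\overline{\Delta}\to\C^2$ $(t\in[0,1])$, holomorphic on $\Delta$, with $\varphi_0(\overline{\Delta})\subset D$ and with all boundary circles $\varphi_t(b\Delta)$ lying in a fixed compact subset $L\subset D$, one has $\varphi_t(\overline{\Delta})\subset D$ for all $t$. First I would exploit the hypothesis that $\Omega$ is pseudoconvex: since $L\subset D\subset\Omega$ and $\varphi_0(\overline{\Delta})\subset\Omega$, the continuity principle applied in $\Omega$ forces $\bigcup_t\varphi_t(\overline{\Delta})$ to be relatively compact in $\Omega$. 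Hence the only possible obstruction is that some disc meets $\widehat{K}$. Assuming this occurs, set $t^{\ast}:=\inf\{t:\varphi_t(\overline{\Delta})\cap\widehat{K}\neq\emptyset\}$; since $\widehat{K}$ is closed the contact set $\{(t,\zeta):\varphi_t(\zeta)\in\widehat{K}\}$ is closed, so $t^{\ast}$ is attained, and because $L$ lies at positive distance from $\widehat{K}$ the first contact $q=\varphi_{t^{\ast}}(\zeta_0)$ happens at an interior parameter $\zeta_0\in\Delta$. Crucially $q\in\Omega\cap\widehat{K}$, and since $\Omega\cap K=\emptyset$ we obtain $q\in\widehat{K}\setminus K$. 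Everything thus reduces to excluding a first, interior contact of an analytic disc with the hull at a point lying off $K$.

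The heart of the matter is the local geometry of $\widehat{K}$ at such a point. I would establish that $\C^2\setminus\widehat{K}$ is locally pseudoconvex at every $q\in\widehat{K}\setminus K$, equivalently that $\widehat{K}$ is locally pseudoconcave there. The input is the local maximum modulus principle of Rossi: choosing a small ball $B=B(q,r)$ with $\overline{B}\cap K=\emptyset$ (possible as $q\notin K$ and $K$ is compact), one has $\max_{\widehat{K}\cap\overline{B}}|p|=\max_{\widehat{K}\cap bB}|p|$ for every polynomial $p$, so $\widehat{K}$ admits no local peak in the interior of $B$. In $\C^2$ this local maximum property upgrades, via Slodkowski's structure theory for polynomial hulls (of which the cited Theorem 2.1 is the relevant instance), to genuine pseudoconvexity of the complement $B\setminus\widehat{K}$. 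Granting this, I would localize the disc family near $q$: replacing $\varphi_t$ by its restriction to a small sub-disc $\{|\zeta-\zeta_0|\le\rho\}$, with $\rho$ chosen so small that $\varphi_{t^{\ast}}(\{|\zeta-\zeta_0|=\rho\})$ avoids $\widehat{K}$ and all images lie in $B$. For $t<t^{\ast}$ these discs lie in $B\setminus\widehat{K}$ with boundaries off $\widehat{K}$, while at $t^{\ast}$ the disc touches $\widehat{K}$ at the interior point $q$; this contradicts the continuity principle for the pseudoconvex set $B\setminus\widehat{K}$, finishing the argument.

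The decisive and genuinely dimension-dependent step is the passage from the local maximum modulus property to pseudoconvexity of the complement: in $\C^2$ the analytic structure carried by $\widehat{K}\setminus K$ is at most one-dimensional, which is exactly the codimension that forces $\C^2\setminus\widehat{K}$ to be pseudoconvex, whereas for $n\ge 3$ the hull may carry lower-dimensional structure and the conclusion fails — consistent with the remark in the introduction that Lemma \ref{ros-st} does not extend to higher dimensions. The secondary technical points I would treat with care are the localization of the disc family (ensuring the shrunken boundary circles stay off $\widehat{K}$, for instance by choosing $\rho$ to avoid the isolated contact radii, or by passing to a connected component of the contact set) and the closedness of the contact set guaranteeing that $t^{\ast}$ is attained. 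All of these are routine once the local pseudoconcavity of the hull — the substantive $\C^2$ ingredient supplied by Slodkowski's theorem — is in hand.
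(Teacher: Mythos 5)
The paper does not actually prove Lemma \ref{ros-st}: it is quoted from Rosay--Stout and explicitly described as a special case of S{\l}odkowski's Theorem 2.1, so there is no internal proof to measure yours against. Your proposal is a reasonable reconstruction of the standard derivation --- Rossi's local maximum modulus principle shows that $\widehat{K}\setminus K$ has the local maximum property, and the genuinely two-dimensional step is the passage from that property to local pseudoconvexity of the complement --- but you supply that decisive step only by citing ``S{\l}odkowski's structure theory,'' which is essentially the same external theorem the paper itself invokes for the whole lemma. Logically your argument reduces the statement to a localized form of itself plus that citation; that is a fair diagnosis of where the content lies, but it is not an independent proof of the $\C^2$-specific core.

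There is also a concrete gap in your local-to-global packaging. At the first-contact parameter $t^{\ast}$ you need a radius $\rho$ such that $\varphi_{t^{\ast}}(\{|\zeta-\zeta_0|=\rho\})$ misses $\widehat{K}$, so that the shrunken discs have boundaries in a fixed compact subset of $B\setminus\widehat{K}$. But the contact set $\{\zeta\in\Delta:\varphi_{t^{\ast}}(\zeta)\in\widehat{K}\}$ need not consist of isolated points or isolated radii: polynomial hulls in $\C^2$ typically carry one-dimensional analytic structure, and the disc $\varphi_{t^{\ast}}$ may lie inside $\widehat{K}$ on an entire neighborhood of $\zeta_0$, in which case every small circle about $\zeta_0$ meets $\widehat{K}$ and no admissible $\rho$ exists; neither of your suggested fixes (isolated contact radii, passing to a component of the contact set) addresses this. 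The clean repair is to drop the disc localization altogether and use the standard fact that pseudoconvexity is a local property of the boundary: it suffices that every point of $b(\Omega\setminus\widehat{K})$ has a ball $U$ with $U\cap(\Omega\setminus\widehat{K})$ pseudoconvex. At points of $\widehat{K}\cap\overline{\Omega}$ one takes $U$ with $\overline{U}\cap K=\emptyset$ and intersects the pseudoconvex sets $U\setminus\widehat{K}$ (the S{\l}odkowski input) and $U\cap\Omega$; at the remaining boundary points pseudoconvexity of $\Omega$ alone suffices.
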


\begin{definition}[\cite{stout}, \cite{jor2}]
For a pseudoconvex domain $\Omega$, a compact set $K \subset b\Omega$ with $K \neq b\Omega$ is said to be  \textit{{$\mathcal{A}(\Omega)$-convex}} if ${\widehat{K}}_{\mathcal{A}(\Omega)}=K$. Moreover, $K$ is said to be \textit{$CR(b\Omega)$-convex} if $K = b\Omega \cap {\widehat{K}}_{\mathcal{A}(\Omega)}$, although $K$ is not necessarily $\mathcal{A}(\Omega)$-convex.
\end{definition}
\begin{theorem}[J\"oricke, \cite{jor2}]\label{jor2}
	Let $\Omega$ be a bounded pseudoconvex domain in $\C^2$ with boundary $b\Omega$ of class
	$\mathcal{C}^2$. Suppose $K \subset b\Omega$ with $K \neq b\Omega$ is a compact $CR(b\Omega)$-convex set. Let for a one-sided neighborhood $V$ of $b\Omega \setminus K$ with $V \subset \Omega \setminus {\widehat{K}_{\mathcal{A}(\Omega)}}$, each
	connected component of $V$ contains in its boundary exactly one component of $b\Omega \setminus K$
	and no other point of $b \Omega \setminus K$. Then any $f\in \mathcal{O}(V)$ has (uniquely
	determined) analytic extension to $\Omega \setminus {\widehat{K}}_{\mathcal{A}(\Omega)}$.		
\end{theorem}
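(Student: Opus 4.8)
This theorem is a removable-singularity sharpening of the Hartogs--Bochner extension phenomenon: when $K=\emptyset$ every CR function on $b\Omega$ extends holomorphically to all of $\Omega$, and the assertion is that a non-empty $K$ obstructs the extension only along its hull $\widehat{K}_{\mathcal{A}(\Omega)}$. The plan is to fix an arbitrary point $p_0\in\Omega\setminus\widehat{K}_{\mathcal{A}(\Omega)}$ and produce a holomorphic extension of $f$ to a neighborhood of $p_0$; assembling these local extensions consistently over $\Omega\setminus\widehat{K}_{\mathcal{A}(\Omega)}$ then yields the global one. First I would pass from $f\in\mathcal{O}(V)$ to its boundary values, which define a CR function on $b\Omega\setminus K$; the $CR(b\Omega)$-convexity condition $K=b\Omega\cap\widehat{K}_{\mathcal{A}(\Omega)}$ guarantees that $b\Omega\setminus K$ is disjoint from $\widehat{K}_{\mathcal{A}(\Omega)}$, so the entire boundary piece $b\Omega\setminus K$ is available as data from inside $\Omega\setminus\widehat{K}_{\mathcal{A}(\Omega)}$.

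The engine of the extension is a global Cauchy--Fantappiè--Leray (jump-type) integral representation, in the spirit of Lupacciolu--Stout. Because $p_0\notin\widehat{K}_{\mathcal{A}(\Omega)}$ there is $g\in\mathcal{A}(\Omega)$ with $\max_K|g|<|g(p_0)|$; after normalising I may assume $\max_K|g|<1=|g(p_0)|$, so that the open set $\{z\in\overline{\Omega}:|g(z)|>1-\varepsilon\}$ contains $p_0$ and is disjoint from $K$ for small $\varepsilon>0$. This separating function is exactly what lets one build a Leray section $\eta(\zeta,z)$, defined for $\zeta\in b\Omega\setminus K$ and $z$ near $p_0$, that is holomorphic in $z$ and has no singularity over the removed ``hole'' $K$; the associated Cauchy--Fantappiè integral $z\mapsto\int_{b\Omega\setminus K}f(\zeta)\,\kappa_{\eta}(\zeta,z)$, with $\kappa_{\eta}$ the form built from $\eta$, is then holomorphic in a neighborhood of $p_0$. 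The standard jump computation, together with the fact that the boundary data is CR, shows this integral reproduces $f$ on the inner side near $b\Omega\setminus K$, so it furnishes the desired holomorphic continuation of $f$ to a neighborhood of $p_0$.

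It remains to see that the local pieces assemble into a single-valued holomorphic function on $\Omega\setminus\widehat{K}_{\mathcal{A}(\Omega)}$ and that $\widehat{K}_{\mathcal{A}(\Omega)}$ is a genuine obstruction rather than an artefact. For the latter I would invoke the maximum-modulus principle of Theorem~\ref{gun-ross}: along any analytic variety or disc with boundary in $b\Omega\setminus K$, every $\phi\in\mathcal{A}(\Omega)$ (in particular the separating functions $g$) obeys the maximum principle, which prevents such varieties from reaching $\widehat{K}_{\mathcal{A}(\Omega)}$ and confirms that the region one can fill is precisely $\Omega\setminus\widehat{K}_{\mathcal{A}(\Omega)}$. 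For single-valuedness I would use the one-sided neighborhood hypothesis: since each connected component of $V$ abuts exactly one component of $b\Omega\setminus K$ and no other boundary point, the continuations obtained from different boundary pieces cannot be forced to disagree, and the identity theorem (applied on the relevant components of $\Omega\setminus\widehat{K}_{\mathcal{A}(\Omega)}$ meeting $V$) then pins down a unique extension agreeing with $f$ on $V$.

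The hard part will be the construction in the second step: producing, for every $p_0$ outside the hull, a globally defined Leray section over $b\Omega\setminus K$ that stays holomorphic in $z$ and avoids the singular locus sitting over $K$, and verifying the jump formula in the presence of the removed compact $K$. This is where the dimension $n=2$ and the $CR(b\Omega)$-convexity of $K$ are essential: they control the geometry of $\widehat{K}_{\mathcal{A}(\Omega)}$ near the boundary, so that $b\Omega\setminus K$ remains a well-behaved $3$-dimensional CR hypersurface, and they are what allow one to upgrade the pointwise separation $|g(p_0)|>\max_K|g|$ to a uniform, holomorphically varying family of kernels. An alternative to the integral formula would be a pure continuity-principle (Kontinuit\"atssatz) argument, sweeping $\Omega\setminus\widehat{K}_{\mathcal{A}(\Omega)}$ by analytic discs anchored in $V$; the same two difficulties---global construction of the disc family and single-valuedness---would reappear there.
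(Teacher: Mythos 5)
First, a point of order: the paper does not prove Theorem \ref{jor2}; it is quoted from J\"oricke's article \cite{jor2} and used as a black box (in Lemma \ref{cvx-lem}), so there is no in-paper proof to compare against, and your attempt has to be judged on its own. Your outline follows the Lupacciolu--Stout integral-formula strategy, which is the right family of ideas, but it has genuine gaps at each load-bearing step. First, you begin by passing from $f\in\mathcal{O}(V)$ to ``its boundary values, which define a CR function on $b\Omega\setminus K$''; nothing in the hypotheses gives $f$ boundary values --- $V$ is merely an open one-sided neighborhood and $f$ is merely holomorphic there. The standard repair is to exhaust $\Omega$ by slightly shrunken domains $\Omega_\epsilon$ whose boundaries near $b\Omega\setminus K$ lie inside $V$, extend from $b\Omega_\epsilon$, and let $\epsilon\to 0$; this must be set up compatibly with the hull $\widehat{K}_{\mathcal{A}(\Omega)}$ and is not automatic. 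Second, the core of your plan --- building a global Leray section from a single separating $g\in\mathcal{A}(\Omega)$ --- requires a Hefer-type division $g(\zeta)-g(z)=\sum_j h_j(\zeta,z)(\zeta_j-z_j)$ with $h_j$ holomorphic in $z$ and controlled up to $b\Omega$. Such a division is available when $g$ extends holomorphically past $\overline{\Omega}$ (which is exactly why Stout's Theorem \ref{stout_theo} assumes $\mathcal{O}(\overline{D})$-convexity), but for a general element of $\mathcal{A}(\Omega)$ on a $\mathcal{C}^2$-bounded pseudoconvex domain it is precisely the nontrivial point. As written, your argument would prove a weaker statement in which $\widehat{K}_{\mathcal{A}(\Omega)}$ is replaced by the hull with respect to functions holomorphic near $\overline{\Omega}$, which is in general a larger set, hence a strictly smaller extension region than the one claimed.

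Third, single-valuedness is not delivered by the identity theorem alone: $\Omega\setminus\widehat{K}_{\mathcal{A}(\Omega)}$ need not be simply connected, the Cauchy--Fantappi\`e extension a priori depends on the choice of separating function $g$, and reconciling the extensions attached to different components of $b\Omega\setminus K$ is exactly where the hypothesis on the components of $V$ must enter; one needs a Stokes-type argument showing that the difference of the kernels attached to two admissible choices integrates to zero, which you do not supply. You also do not establish that $\Omega\setminus\widehat{K}_{\mathcal{A}(\Omega)}$ is the correct final domain --- the relevant fact is its pseudoconvexity (in the spirit of Slodkowski and Lemma \ref{ros-st}), not merely that varieties cannot reach the hull. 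In short, the skeleton is plausible and correctly identifies the main difficulties, but each of the three essential steps is left unproved, and the second, as formulated, would not yield extension up to the stated $\mathcal{A}(\Omega)$-hull.
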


\begin{definition}[$\mathcal{R}(\overline{D})$-convex and $\mathcal{O}(\overline{D})$-convex, \cite{st-81}]\label{ODRD-cvx}
Let $D\subset \C^n$ be a bounded domain of holomorphy. A closed set ${E \subset bD}$ is called  $\mathcal{R}(\overline{D})$-convex if for $p\in \overline{D}\setminus E$, there is $h\in \mathcal{O}(\overline{D})$ with $h(p)=0$ and $h|_E \neq 0$. $E$ is said to be  $\mathcal{O}(\overline{D})$-convex if given $p\in \overline{D}\setminus E$ there is $f\in \mathcal{O}(\overline{D})$ with $f(p)=1$, and $||f||_E=\sup_E|f|<1$. 
\end{definition}
Note that if $E$ is rationally convex, then it is $\mathcal{R}(\overline{D})$-convex. Moreover, if $\overline{D}$ in Definition \ref{ODRD-cvx} is polynomially convex, then the condition of $\mathcal{O}(\overline{D})$-convexity is clearly the condition that $E$ is polynomially convex (see \cite{st-81}).
\begin{theorem}[Stout, \cite{st-81}]\label{stout_theo}
	Let $D$ be a bounded domain of holomorphy in $\C^n$ with $n\geq 2$, let $E\subset bD$ be closed, and let $\Omega \subset D$ be an open set such that $\Omega \cup ( bD\setminus E)$ is a
	neighborhood of $ bD\setminus E$ in $\overline{D}$. Then if either of the conditions is satisfied, 
	\begin{itemize}
		\item [(i)] $n\geq 2$ and $E$ is $\mathcal{O}(\overline{D})$-convex,
		\item [(ii)] $n\geq 3$ and $E$ is $\mathcal{R}(\overline{D})$-convex,
	\end{itemize}
	
	then, for any $f\in \mathcal{O}({\Omega})$, there exists an $F\in \mathcal{O}({D})$ with $F|_{\Omega} = f$ near $bD$. Moreover, if $\Omega$ is connected, then $\mathcal{E}(\Omega)=D.$
\end{theorem}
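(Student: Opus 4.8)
The plan is to prove the extension statement by an analytic continuation argument driven by the convexity of $E$, and then to read off the envelope assertion. Fix $f\in\mathcal{O}(\Omega)$. Since $\Omega\cup(bD\setminus E)$ is a neighbourhood of $bD\setminus E$ in $\overline{D}$, the set $\Omega$ contains an interior collar of every boundary point outside $E$, so $f$ is already holomorphic on a one-sided neighbourhood of $bD\setminus E$; the whole task is to propagate $f$ from this collar into the interior of $D$, the only obstruction being concentrated over $E$. I would first note that $D$, being a bounded domain of holomorphy, is Stein and pseudoconvex, and that any interior extension is automatically unique once $\Omega$ is connected, by the identity theorem. It is cleanest to handle case $(ii)$ first and then explain what the extra strength in case $(i)$ buys.

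For case $(ii)$, with $n\geq 3$ and $E$ being $\mathcal{R}(\overline{D})$-convex, I would reach an arbitrary interior point $q\in D$ by a single complex hypersurface. By Definition \ref{ODRD-cvx} there is $h\in\mathcal{O}(\overline{D})$ with $h(q)=0$ and $h$ not vanishing on $E$. Then $V:=\{h=0\}\cap D$ is an analytic subvariety of $D$ of dimension $n-1\geq 2$ passing through $q$, and because $h\neq 0$ on $E$ its closure meets $bD$ only in $bD\setminus E$, so $\partial V\subset bD\setminus E$ and hence a neighbourhood of $\partial V$ in $V$ lies in the collar $\Omega$. Thus $f|_{V}$ is holomorphic near $\partial V$, while $V\setminus\Omega$ is relatively compact in $D$; since $\dim V\geq 2$, the Hartogs extension phenomenon on $V$ fills $f$ in across this core, producing a value at $q$. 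This is exactly where the hypothesis $n\geq 3$, i.e. $n-1\geq 2$, is indispensable. Varying $q$, checking that the values are compatible on overlaps of these slices, and using the maximum-modulus principle of Theorem \ref{gun-ross} to rule out multivaluedness, assembles a global $F\in\mathcal{O}(D)$.

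For case $(i)$, with $n\geq 2$ but the stronger $\mathcal{O}(\overline{D})$-convexity, the hypersurface argument fails when $n=2$ because $\{h=0\}$ is then a curve and carries no Hartogs phenomenon; the stronger convexity is what compensates. Here Definition \ref{ODRD-cvx} supplies, for each $q\in D$, a peaking function $g\in\mathcal{O}(\overline{D})$ with $g(q)=1$ and $\sup_E|g|<1$. I would exploit $g$ through a one-parameter continuity principle: the superlevel domains $\{|g|>t\}$ for $t\in(\sup_E|g|,1)$ are nested, shrink toward $q$ as $t\uparrow 1$, avoid $E$ for every such $t$, and meet $bD$ only inside the collar; attaching analytic discs to this sweep gives a Kontinuit\"atssatz family whose boundaries stay where $f$ is defined, so $f$ continues analytically up to $q$, with single-valuedness again secured by Theorem \ref{gun-ross}.

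Finally, the envelope statement is formal: once every $f\in\mathcal{O}(\Omega)$ has been extended to the Stein domain $D$ and $\Omega$ is connected, the restriction $\mathcal{O}(D)\to\mathcal{O}(\Omega)$ is a bijection onto its image, $D$ is a schlicht domain of holomorphy carrying a function with $bD$ as natural boundary, and hence $D$ cannot be enlarged; the maximality defining $\mathcal{E}(\Omega)$ then forces $\mathcal{E}(\Omega)=D$. I expect the genuine difficulty to lie in case $(i)$: producing a continuity-principle family that actually sweeps out all of $D$ while keeping every boundary inside the collar, and doing so compatibly as $q$ ranges over $D$, is delicate, and it is precisely here that rational convexity alone is too weak and the full $\mathcal{O}(\overline{D})$-convexity is required.
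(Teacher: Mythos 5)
This statement is not proved in the paper at all: it is imported verbatim from Stout's 1981 article (reference \cite{st-81}) and used as a black box, so there is no in-paper argument to compare yours against. Judged on its own terms, your sketch reproduces the standard heuristics behind such results but leaves the genuinely hard steps unproved. In case (ii), defining $F(q)$ as the value at $q$ of the Hartogs filling of $f$ along the slice $V=\{h=0\}\cap D$ presupposes that this value is independent of the choice of $h$ (there are many admissible $h$ through a given $q$) and that the resulting function of $q$ is holomorphic; neither is addressed. Your appeal to Theorem \ref{gun-ross} cannot ``rule out multivaluedness'': the maximum principle on varieties compares a single holomorphic function with its boundary values, whereas the issue here is whether two extensions built from different slices agree at all, and naive continuity-principle arguments in general only produce an extension to a possibly multisheeted Riemann domain --- the schlichtness is part of what must be proved, not an automatic consequence. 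There are also unexamined technical points: $V$ may be singular or $V\cap\Omega$ disconnected near the core, so even the single-slice Hartogs step needs normalization and a connectedness hypothesis.

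Case (i) is weaker still: the superlevel sets $\{|g|>t\}$ are open sets, not a family of analytic discs, and ``attaching analytic discs to this sweep'' is not a construction. Producing a Kontinuit\"atssatz family that starts in the collar, keeps all its boundaries where $f$ is defined, and reaches an arbitrary $q\in D$ --- coherently in $q$ --- is exactly the content of the theorem; in the literature this is circumvented altogether by a global device (a Cauchy--Fantappi\`e/Lupacciolu integral kernel built from a Hefer decomposition of $g$, or a $\bar\partial$-with-supports argument), which is what makes the extension manifestly single-valued and holomorphic. Your closing paragraph on $\mathcal{E}(\Omega)=D$ is essentially correct once the extension statement is granted, and you do correctly identify where the difficulty lies; but as written the two core steps are asserted rather than proved, so the proposal has a genuine gap rather than being an alternative proof.
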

We end this section by recalling the following definition.
\begin{definition}[Page 164, \cite{stout}]\label{var_cvx}
	A compact subset $E$ of $\C^n$ is said to be convex with respect to varieties of dimension $m$ if for each point $p\in \C^n \setminus E$ there is a purely $m$-dimensional subvariety 
	$V_p$ of $\C^n$ containing $p$ such that $E\cap V_p=\emptyset$.
\end{definition}


\section{Revisiting  two classical results on envelope of holomorphy}\label{sec_3}

We start this section by recalling that the proof of Theorem \ref{JP2theorem} involves a result by Iva\v skovi\v c in complex dimension 2 (\cite[Lemma 8]{ivas}), followed by applying a slightly constructive technique in complex dimension $n\geq 3$. Here we first establish an  alternative and concise proof of Theorem \ref{JP2theorem} using Stout's theorem on analytic continuation.\\

\textbf{Alternative proof of Theorem \ref{JP2theorem}.}
We start with the following pseudoconvex bounded domain $$D=\Big\{x+iy\in \C^n:||x||^2-||y||^2>(r_1^2-r_3^2)\Big\}\cap \Big(B_{r_2}(0)+iB_{r_3}(0)\Big).$$
Set $E:=\{x+iy\in \overline{D}:||x||^2-||y||^2=(r_1^2-r_3^2)\} \subset \partial D.$ To prove $E$ is $\mathcal{O}(\overline{D})$-convex,  
we choose a $p=x_0+iy_0\in \overline{D}\setminus E$. 
Note that $p$ satisfies ${||x_0||^2-||y_0||^2>(r_1^2-r_3^2)}$. Choose $\epsilon_0>0$ such that $$||x_0||^2-||y_0||^2=(r_1^2-r_3^2)+\epsilon_0.$$
Define the entire function $h(z):=\exp({\sum_{j=1}^{n}z_j^2})/\exp({r_1^2-r_3^2+\epsilon_0})$ for all $z\in \C^n$. Notice that $|h(p)|=1$ and ${||h||_E=\exp(r_1^2-r_3^2)/\exp({r_1^2-r_3^2+\epsilon_0})= \frac{1}{\exp(\epsilon_0)}<1}.$
Since $|h(p)|=1$, there exists $\theta\in \R$ with $h(p)=\exp(i\theta)$. Next, consider the entire function $f:\C^n \to \C$ defined by ${f(z):=\exp(-i\theta)h(z)}$ for all $z\in \C^n$, and note that ${f(p)=\exp(-i\theta)\exp(i\theta)=1.}$ Moreover, as the modulus remains unchanged under rotation, it implies $||f||_E=\sup_E|f|<1$ too. Thus $E$ is $\mathcal{O}(\overline{D})$-convex by Definition \ref{ODRD-cvx}. Finally, we consider the connected domain $$\Omega=\{x \in \R^n: r_1<||x||<r_2\}+i\{y \in \R^n: ||y||<r_3\}$$ and apply Theorem \ref{stout_theo} to conclude that $\mathcal{E}(\Omega)=D.$\\

By a \emph{domain $X$ convex up to exclusion of finitely many convex holes} we mean a domain $X\subset\R^n$ obtained from a convex domain $X_{\text{cvx}}$ by excluding finitely many disjoint compact convex sets $\overline{H}_{x,j}$, $1\leq j\leq m_0$. Next, we prove the following lemma which is a minor variant of \cite[Lemma 4.1]{haz-por}. The key observation here is that one can relax the assumption of \textit{strict convexity} in \cite[Lemma 4.1]{haz-por} by adding more regularity in the $y$-direction (e.g., assuming $bY$ to be $\mathcal{C}^2$-smooth). In view of this, we obtain the following lemma.

\begin{lemma}\label{cvx-lem}
	Let $X\subset\R^2$ be a domain convex up to exclusion of a single convex hole $H_x$ with $\mathcal{C}^2$-boundary (and not necessarily strictly convex), and let $Y\Subset\R^2$ be a convex domain with $\mathcal{C}^2$-boundary. Denote  ${H=H_{x}+ibY}$. Then $\mathcal{E}(D)=D'\backslash\widehat{H}$, where $D=X+iY$ and $D'=(X\cup H_x)+iY$.
\end{lemma}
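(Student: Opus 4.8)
The plan is to realize $D'\setminus\widehat H$ as a bounded domain of holomorphy containing $D$ schlichtly, and then to obtain both the extension of all $f\in\mathcal O(D)$ across the filled-in hole and the maximality $\mathcal E(D)=D'\setminus\widehat H$ in one stroke from Stout's theorem (Theorem \ref{stout_theo}). Throughout write $D'=X_{\text{cvx}}+iY$ for the convex domain obtained by filling the hole, and $H=H_x+ibY$. I would start from the hull estimate $\widehat H\subseteq H_x+i\overline Y$. Testing $H$ against the entire functions $z\mapsto\exp(\langle c,z\rangle)$ and $z\mapsto\exp(i\langle d,z\rangle)$ with $c,d\in\R^2$—whose moduli depend only on $x$, respectively only on $y$—and separating by means of the convexity of $H_x$ and of $Y$, one confines the $x$-projection of $\widehat H$ to $H_x$ and its $y$-projection to $\overline Y$. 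Since the hole $H_x$ is interior to $X_{\text{cvx}}$ and disjoint from $X$, this gives $\widehat H\cap D=\emptyset$ and, sharpening the argument on $bD'$, the clean identity $\widehat H\cap bD'=H$; consequently $D\subseteq\mathcal D:=D'\setminus\widehat H$ and this inclusion is schlicht.

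Next I would check that $\mathcal D$ is a bounded domain of holomorphy. The filled domain $D'=X_{\text{cvx}}+iY$ is a product of two convex planar domains, hence convex and pseudoconvex, and $D'\cap H=\emptyset$ because $Y\cap bY=\emptyset$. Lemma \ref{ros-st} (Rosay--Stout) then shows that $\mathcal D=D'\setminus\widehat H$ is pseudoconvex; being bounded, and connected because the complement of a polynomially convex set does not separate the convex domain $D'$, it is a domain of holomorphy by Oka's theorem.

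For the extension I would apply Theorem \ref{stout_theo}(i) (in the admissible range $n=2$) with the bounded domain of holomorphy $\mathcal D$, the open connected set $\Omega:=D$, and the closed set $E:=\widehat H\cap\overline{\mathcal D}\subseteq b\mathcal D$. Here $E$ is $\mathcal O(\overline{\mathcal D})$-convex almost for free: any $p\in\overline{\mathcal D}\setminus E$ lies outside the polynomial hull $\widehat H$, so choosing a polynomial $P$ with $|P(p)|>\max_H|P|\ge\max_E|P|$ and normalizing yields $f\in\mathcal O(\overline{\mathcal D})$ with $f(p)=1$ and $\|f\|_E<1$. Moreover $b\mathcal D\setminus E=bD'\setminus H$ by the identity above, and $D$ is a one-sided collar of $bD'\setminus H$ inside $\overline{\mathcal D}$, so that $D\cup(bD'\setminus H)$ is a neighborhood of $bD'\setminus H$ in $\overline{\mathcal D}$. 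Theorem \ref{stout_theo} then extends every $f\in\mathcal O(D)$ to $F\in\mathcal O(\mathcal D)$ (the equality $F=f$ valid near $b\mathcal D$ propagating to all of the connected $D$ by the identity theorem) and, since $D$ is connected, yields precisely $\mathcal E(D)=\mathcal D=D'\setminus\widehat H$.

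I expect the main obstacle to be the verification that $D$ is a genuine one-sided neighborhood of $bD'\setminus H$ in $\overline{\mathcal D}$ all the way up to the \emph{rim} of $H$, that is, the locus over $bH_x$ where the excluded cylinder $\overline H_x+iY$ meets the faces $\overline{X_{\text{cvx}}}+ibY$. This is exactly where the hypotheses enter and where strict convexity in \cite[Lemma 4.1]{haz-por} is traded for regularity: the $\mathcal C^2$-smoothness of $bH_x$ and $bY$ guarantees that near the rim the points of $D$ and of $\widehat H$ are separated in the $x$-variable by $bH_x$, so one may take the collar with $x$-radius below the distance to $\overline H_x$ and thereby keep $D\cup(bD'\setminus H)$ clear of the over-hole region and of $\widehat H$, securing the neighborhood hypothesis of Theorem \ref{stout_theo}. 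A secondary technical point, also resting on the convexity of $H_x$ and $Y$, is the clean identification $\widehat H\cap bD'=H$ and the connectedness of $\mathcal D$; should one prefer to argue through the boundary, the same geometry makes $H$ a $CR(bD')$-convex compact and J\"oricke's Theorem \ref{jor2} an equally viable route to the extension.
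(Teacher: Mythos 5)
Your argument is correct, but it follows a genuinely different route from the paper. The paper's proof never invokes Stout's continuation theorem here: it performs a small $\mathcal{C}^2$-deformation of the outer boundary of $D'$ to get a smooth pseudoconvex $D_{\text{new}}\subset D'$ with $(H_x+i\overline{Y})\cap bD_{\text{new}}=H$, checks that $H$ is $CR(bD_{\text{new}})$-convex, applies J\"oricke's Theorem~\ref{jor2} to the one-sided neighborhood $(bD_{\text{new}}\setminus H)\cup(D_{\text{new}}\cap D)$ to extend to $D_{\text{new}}\setminus\widehat{H}_{\mathcal{A}(D_{\text{new}})}$, and then collapses the chain $D'\setminus\widehat{H}\subseteq D'\setminus\widehat{H}_{\mathcal{A}(D')}\subseteq D'\setminus\widehat{H}_{\mathcal{A}(D_{\text{new}})}$ using the pseudoconvexity of $D'\setminus\widehat{H}$ furnished by Lemma~\ref{ros-st}. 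Your route --- taking $\mathcal{D}=D'\setminus\widehat{H}$ itself as the ambient domain of holomorphy, $E=\widehat{H}\cap\overline{\mathcal{D}}\subset b\mathcal{D}$, and invoking Theorem~\ref{stout_theo}(i) --- is exactly the strategy the paper deploys elsewhere (the alternative proof of Theorem~\ref{JP2theorem} in Section~\ref{sec_3} and Theorem~\ref{2nd_theo}), and it buys two things: the $\mathcal{O}(\overline{\mathcal{D}})$-convexity of $E$ is immediate from the polynomial convexity of $\widehat{H}$, and no boundary smoothing or comparison of $\mathcal{A}$-hulls is needed; indeed your proof never actually uses the $\mathcal{C}^2$-regularity of $bH_x$ or $bY$, so it proves a slightly stronger statement. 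Two small remarks. First, the step you flag as the main obstacle is a non-issue: since $bD'\cap(H_x+i\overline{Y})=H$, every $p\in bD'\setminus H$ has positive distance to the compact set $H_x+i\overline{Y}\supseteq\widehat{H}$, so a ball of that radius already witnesses the neighborhood hypothesis --- no collar uniformity or smoothness is required. Second, the connectedness of $\mathcal{D}$ deserves its one-line justification rather than the slogan you give: any component of $\mathcal{D}$ not meeting $D$ would be contained in $H_x+iY$ and would have its full boundary inside $\widehat{H}$ (using $bD'\cap(H_x+i\overline{Y})=H$), hence would be absorbed into $\widehat{H}$ by the maximum principle for polynomials, a contradiction.
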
 

\begin{proof}
$D'$ is a bounded pseudoconvex domain in $\C^2$ but not of class $\mathcal{C}^2$. Therefore we perform a very small $\mathcal{C}^2$-deformation to the ``outer boundary" (i.e., to the points of $bD'\setminus H$) of the domain $D'$ to obtain a new pseudoconvex domain $D_{\text{new}}$ of class $\mathcal{C}^2$ such that $D_{\text{new}}\subset D'$. Since $(H_x+i\overline{Y}) \cap bD'=H,$ it implies that $D_{\text{new}}$ can be chosen with ${(H_x+i\overline{Y}) \cap bD_{\text{new}}=H}.$ By convexity of $H_x+i\overline{Y}$, it implies ${\widehat{H}}_{\mathcal{A}(D_{\text{new}})}\subset H_x+i\overline{Y}$, and hence  ${{\widehat{H}}_{\mathcal{A}(D_{\text{new}})}\cap bD_{\text{new}}=H.}$  Therefore $H$ is $CR(bD_{\text{new}})$-convex. Note that the following one-sided neighborhood $$V= \big(bD_{\text{new}} \setminus H\big) \cup \big(D_{\text{new}}\cap D\big) \subset \overline{D}$$ of $bD_{\text{new}} \setminus H$ satisfies the conditions of the Theorem \ref{jor2}. Therefore it follows that every holomorphic function defined on $D$ (and on $D_{\text{new}}\cap D$ by restriction) has a (uniquely
determined) analytic extension to $D_{\text{new}} \setminus {\widehat{H}}_{\mathcal{A}(D_{\text{new}})}$ and hence to ${D' \setminus {\widehat{H}}_{\mathcal{A}(D_{\text{new}})}}$ by construction. Next we see that
\begin{equation}\label{ineq_diff_hulls_same}
D'\setminus \widehat{H}\subseteq D'\setminus{\widehat{H}}_{\mathcal{A}(D')}\subseteq D'\setminus {\widehat{H}}_{\mathcal{A}(D_{\text{new}})},
\end{equation}
as $\mathcal{O}(\C^n)\subseteq \mathcal{A}(D')\subseteq \mathcal{A}(D_{\text{new}}).$ Since $H$ satisfies $D' \cap H=\emptyset$, by {Lemma \ref{ros-st}} it follows that $D'\setminus \widehat{H}$ is pseudoconvex. Finally, due to pseudoconvexity, all the sets in equation (\ref{ineq_diff_hulls_same}) are equal, which completes the proof.
\end{proof}

Next, we state the following variant of Theorem \ref{haz-por_sch_2} omitting repetitive details of the proof, as it closely follows the original argument. 

\begin{theorem}\label{schlicht2d}
Let $X\subset\R^2$ be a domain convex up to exclusion of finitely many convex holes with $\mathcal{C}^2$-boundary (and not necessarily strictly convex) and let ${Y\Subset\R^2}$ be a convex domain with $\mathcal{C}^2$-boundary. Then the envelope of holomorphy of $D=X+iY$ is schlicht.
\end{theorem}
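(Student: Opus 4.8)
The plan is to reduce the general case of finitely many convex $\mathcal{C}^2$-holes to the single-hole case established in Lemma \ref{cvx-lem}, by peeling off one hole at a time and tracking how the envelope evolves. First I would set up notation: write $X=X_{\text{cvx}}\setminus\bigcup_{j=1}^{m_0}\overline{H}_{x,j}$, where $X_{\text{cvx}}$ is the convex domain and the $\overline{H}_{x,j}$ are the disjoint compact convex holes with $\mathcal{C}^2$-boundary, and set $H_j=H_{x,j}+ibY$ so that $H=\bigcup_j H_j$ is the total compact fence removed from the convex initial domain $D'=X_{\text{cvx}}+iY$. The expected conclusion, generalizing Lemma \ref{cvx-lem}, is that $\mathcal{E}(D)=D'\setminus\widehat{H}$, from which schlichtness is immediate: $D'\setminus\widehat{H}$ is a genuine (single-sheeted) subdomain of $\C^2$, so the embedding $\iota_D$ followed by the natural inclusion realizes the envelope univalently over $\C^2$.

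The core analytic input is the same as in Lemma \ref{cvx-lem}. I would again replace $D'$, which is pseudoconvex but only Lipschitz at the inner boundary components, by a $\mathcal{C}^2$-smooth pseudoconvex domain $D_{\text{new}}\subset D'$ obtained by a small outward-avoiding $\mathcal{C}^2$-deformation of the outer boundary $bD'\setminus H$, arranged so that each $(H_{x,j}+i\overline{Y})\cap bD_{\text{new}}=H_j$ is preserved. The key geometric point is that $H=\bigcup_j H_j$ is $CR(bD_{\text{new}})$-convex: since the holes are pairwise disjoint and each piece $H_{x,j}+i\overline{Y}$ is convex, one has $\widehat{H}_{\mathcal{A}(D_{\text{new}})}\subset\bigcup_j(H_{x,j}+i\overline{Y})$ and hence $\widehat{H}_{\mathcal{A}(D_{\text{new}})}\cap bD_{\text{new}}=H$. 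With the one-sided neighborhood
\[
V=\bigl(bD_{\text{new}}\setminus H\bigr)\cup\bigl(D_{\text{new}}\cap D\bigr)\subset\overline{D},
\]
I would invoke J\"oricke's continuation theorem (Theorem \ref{jor2}) to extend every $f\in\mathcal{O}(D)$ holomorphically to $D_{\text{new}}\setminus\widehat{H}_{\mathcal{A}(D_{\text{new}})}$, and thus to $D'\setminus\widehat{H}_{\mathcal{A}(D_{\text{new}})}$. Finally, the chain
\[
D'\setminus\widehat{H}\subseteq D'\setminus\widehat{H}_{\mathcal{A}(D')}\subseteq D'\setminus\widehat{H}_{\mathcal{A}(D_{\text{new}})},
\]
together with Lemma \ref{ros-st} (which gives that $D'\setminus\widehat{H}$ is pseudoconvex, since $D'\cap H=\emptyset$), forces all three sets to coincide, identifying the extension domain as the pseudoconvex hull $D'\setminus\widehat{H}$ and confirming it is the envelope.

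The step I expect to be the main obstacle is verifying the hypotheses of Theorem \ref{jor2} once there are several holes, specifically the condition that \emph{each connected component of the one-sided neighborhood $V$ contains in its boundary exactly one component of $bD_{\text{new}}\setminus H$ and no other point of $bD_{\text{new}}\setminus H$}. With a single convex hole this is automatic, but with $m_0$ holes one must confirm that the local collar pieces of $V$ attached to the distinct boundary components do not merge, which relies on the disjointness of the $\overline{H}_{x,j}$ and on choosing the deformation $D_{\text{new}}$ and the collar thin enough that the components of $V$ stay separated; alternatively one may apply the single-hole argument componentwise. A second, more bookkeeping-level point is checking that $\widehat{H}=\bigcup_j\widehat{H_j}$ (no new hull points are created by removing several convex fences simultaneously), which follows from the convexity and mutual disjointness of the pieces but should be stated explicitly. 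Because these are precisely the places where the multi-hole geometry differs from the single-hole case treated in Lemma \ref{cvx-lem}, and the remaining analytic machinery is verbatim, it is reasonable to present the proof as in the original argument with these separation and hull-decomposition verifications made explicit and the rest omitted.
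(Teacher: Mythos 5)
Your reduction takes a genuinely different route from the paper, and it contains a real gap at the step you dismiss as ``bookkeeping''. The paper's own proof never touches the hull of the union of all fences: following \cite[p.~2991]{haz-por}, it smoothly deforms (shrinks) the holes and applies the Behnke--Stein theorem to the resulting increasing union of pseudoconvex domains, so that at each stage only the single-hole situation of Lemma~\ref{cvx-lem} is invoked. Your one-shot application of Theorem~\ref{jor2} to $H=\bigcup_j H_j$ instead requires $\widehat{H}_{\mathcal{A}(D_{\text{new}})}\subset\bigcup_j(H_{x,j}+i\overline{Y})$, and this does \emph{not} follow from the convexity and disjointness of the pieces. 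Convexity of each $H_{x,j}+i\overline{Y}$ only gives $\widehat{H}_{\mathcal{A}(D_{\text{new}})}\subset \textsf{ch}\bigl(\bigcup_j H_{x,j}\bigr)+i\overline{Y}$, which is far larger than the union. Concretely, take $m_0=3$ holes arranged so that some $x_0\in X$ lies in the convex hull of $\bigcup_j H_{x,j}$ but in none of the $H_{x,j}$, and consider $p=x_0+iy_0$ with $y_0\in bY$, a point of $bD_{\text{new}}\setminus H$. No complexified real-linear functional separates $p$ from $H$, and although $\bigcup_j H_{x,j}$ is polynomially convex as a compact subset of the totally real $\R^2$, a polynomial small on $\bigcup_j H_{x,j}$ is not thereby controlled on $H=\bigcup_j H_{x,j}+ibY$, whose points have nonzero imaginary parts. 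So the $CR(bD_{\text{new}})$-convexity of $H$ --- the hypothesis you need for J\"oricke's theorem --- is unproved for $m_0\geq 3$. (For $m_0=2$ a Kallin-type separation by a single linear functional rescues the claim, but for three or more disjoint convex sets the computation of such hulls is a genuinely hard problem, of the same nature as the open question on polynomial convexity of unions of four or more disjoint balls.) The identity $\widehat{H}=\bigcup_j\widehat{H_j}$ that you flag in passing is exactly this difficulty and cannot be ``stated explicitly'' without proof.

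By contrast, the obstacle you single out as the main one --- the component condition in Theorem~\ref{jor2} --- is not actually a problem: $bD_{\text{new}}\setminus H$ stays connected for any number of holes, since $\overline{X_{\text{cvx}}}\setminus\bigcup_j H_{x,j}$ is connected in $\R^2$ and $bY$ is connected, so $V$ has a single component whose boundary contains the single component of $bD_{\text{new}}\setminus H$. If you want a complete argument, follow the paper's strategy: shrink the holes along a smooth one-parameter family, at each stage extend across a thin collar using the single-hole Lemma~\ref{cvx-lem} applied locally near the hole being deformed (where the other holes play no role), and pass to the limit with Behnke--Stein; this proves schlichtness, which is all the theorem asserts, without ever computing $\widehat{H}$ for the union.
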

\begin{proof}
The proof of the general case with finitely many convex holes, follows similary as mentioned in \cite[Page 2991]{haz-por} by smoothly deforming the holes and applying Behnke-Stein theorem (\cite{beh-ste}) to the increasing union of pseudoconvex sets.
\end{proof}

We now shift our attention to different hulls. Recall that a polynomially convex compact set is rationally convex, but not conversely. In general, it is still an open problem to know when a rationally convex compact set $K$ with 1-dimensional \v Cech cohomology group $\check{H}^1(K,\Z)=0$ is polynomially convex (see \cite[Question II.7]{stol2}). Although explicit calculations of the hulls are hard in general, nevertheless, in certain contexts $\widehat{K}_{\mathcal{R}}$ and $\widehat{K}$ may become the same, as follows from our next proposition.  

\begin{proposition}\label{both_hull_same}
	Let $r_1,r_3\in \R^+$, $n\geq 2$, and $K_x=\{x\in \R^n: ||x||\leq r_1\}$. Denote $Y=\{y\in \R^n: ||y||<r_3\}$  and $K=K_x+ibY$.  Then $\widehat{K}_{\mathcal{R}}=\widehat{K}$, and it is given by $$\big\{x+iy\in \C^n: ||y||\leq r_3, ||x||^2-||y||^2\leq (r_1^2-r_3^2)\big\}.$$
\end{proposition}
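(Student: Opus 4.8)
Write $\sigma(z)=\sum_{j=1}^{n}z_j^2$, so that $\operatorname{Re}\sigma(z)=\|x\|^2-\|y\|^2$, and let
$$L=\big\{x+iy\in\C^n:\ \|y\|\le r_3,\ \|x\|^2-\|y\|^2\le r_1^2-r_3^2\big\}$$
denote the set claimed to be the hull. The plan is to obtain both equalities at once from the chain
$$L\ \subseteq\ \widehat K_{\mathcal R}\ \subseteq\ \widehat K\ \subseteq\ L,$$
after which all three sets coincide. The middle inclusion is the general fact that a polynomially convex compact set is rationally convex: if $z\notin\widehat K$, choose a polynomial $p$ with $|p(z)|>\max_K|p|$; then $p-p(z)$ vanishes at $z$ but is zero-free on $K$, so $z\notin\widehat K_{\mathcal R}$. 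Thus only the two outer inclusions need work.

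For $\widehat K\subseteq L$ I would test $K$ against explicit entire functions, using that $\widehat K$ equals the hull taken over $\mathcal O(\C^n)$ (Taylor polynomials converge locally uniformly on compacta). Since $\|y\|=r_3$ on $K$ and $|e^{-i\langle c,z\rangle}|=e^{\langle c,y\rangle}$, one has $\max_K|e^{-i\langle c,z\rangle}|=e^{\|c\|r_3}$ for every $c\in\R^n$; letting $c$ range over the unit sphere gives $\widehat K\subseteq\{\|y\|\le r_3\}$. Likewise $\operatorname{Re}\sigma=\|x\|^2-r_3^2\le r_1^2-r_3^2$ on $K$, so $\max_K|e^{\sigma}|=e^{r_1^2-r_3^2}$ and hence $\widehat K\subseteq\{\operatorname{Re}\sigma\le r_1^2-r_3^2\}$. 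Intersecting the two sublevel sets returns exactly $L$.

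The substantial inclusion is $L\subseteq\widehat K_{\mathcal R}$, and this is the step I expect to be the main obstacle. Recall (cf.\ Definition \ref{var_cvx} with $m=n-1$) that a point lies outside $\widehat K_{\mathcal R}$ precisely when some algebraic hypersurface passes through it while avoiding $K$; so I must show that every algebraic hypersurface meeting $L$ also meets $K$. I would prove this first for $z_0\in\inter L$. Assume $Z=p^{-1}(0)$ is a hypersurface with $z_0\in Z$ and $Z\cap K=\emptyset$, and let $C$ be the connected component of $Z\cap L$ through $z_0$; since $n\ge2$, $C$ is a positive-dimensional subvariety and its relative boundary $\partial_Z C$ in $Z$ is nonempty (a positive-dimensional subvariety of $\C^n$ is never compact). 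The geometric heart of the argument is that $\partial_Z C$ is forced onto the correct face of $bL$: writing $bL=K\cup S$ with $S=\{\operatorname{Re}\sigma=r_1^2-r_3^2,\ \|y\|\le r_3\}$, any relative-boundary point lying in $\inter L$ would in fact be a relative-\emph{interior} point of $C$, so $\partial_Z C\subseteq Z\cap bL=Z\cap S$ once $Z\cap K=\emptyset$. Now $v:=\operatorname{Re}\sigma-(r_1^2-r_3^2)$ is pluriharmonic, vanishes on $S$, and satisfies $v\le0$ on $L\supseteq C$. Applying the maximum modulus principle (Theorem \ref{gun-ross}) to $F=e^{(r_1^2-r_3^2)-\sigma}$ on $C$ — whose modulus is $e^{-v}$ and equals $1$ on $\partial_Z C$ — gives $v\ge0$ on $C$ (either the maximum of $|F|$ sits on $\partial_Z C$, or $F$ is constant, forcing $v\equiv0$); combined with $v\le0$ this yields $v\equiv0$ on $C$, contradicting $v(z_0)<0$. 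Hence $\inter L\subseteq\widehat K_{\mathcal R}$, and as $\widehat K_{\mathcal R}$ is closed and $\inter L$ is dense in $L$ (a routine dilation check), $L\subseteq\widehat K_{\mathcal R}$; chaining the three inclusions gives $\widehat K_{\mathcal R}=\widehat K=L$. Within this last step the two delicate points are verifying that $\partial_Z C$ genuinely lands on $S$ and not on $K$ (including behaviour at $\operatorname{sing}(Z)$, which I would absorb by working on $Z_{\mathrm{reg}}$ and extending the maximum principle across the singular locus) and the passage from $\inter L$ to all of $L$, which rests on $L=\overline{\inter L}$.
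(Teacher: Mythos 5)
Your proposal is correct and follows essentially the same route as the paper: the substantial inclusion of the claimed set into $\widehat{K}_{\mathcal{R}}$ is obtained, exactly as in the paper's proof of Proposition \ref{both_hull_same}, by trapping a branch of an algebraic hypersurface $P^{-1}(0)$ inside the region, observing that its relative boundary must land on the Levi-flat face $\{\|x\|^2-\|y\|^2=r_1^2-r_3^2\}$ because the hypersurface avoids $K$, and contradicting the maximum principle on varieties (Theorem \ref{gun-ross}) via the function $e^{-\sum_j z_j^2}$, followed by a closure argument. The only cosmetic difference is that you verify $\widehat{K}\subseteq L$ directly with the entire test functions $e^{-i\langle c,z\rangle}$ and $e^{\sum_j z_j^2}$, whereas the paper imports the identity $\widehat{K}=L$ from \cite{haz-por}.
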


\begin{proof}
	In $\C^n$ with $ n\geq 2$, it can easily be derived from \cite[Example 4.4(a)]{haz-por} that $$\widehat{K}=\{x+iy\in \C^n: ||y||\leq r_3, ||x||^2-||y||^2\leq (r_1^2-r_3^2)\}.$$
	We first want to show $$N:=\{x+iy\in \C^n: ||y||\leq r_3, ||x||^2-||y||^2< (r_1^2-r_3^2)\} \subset \widehat{K}_{\mathcal{R}}.$$ Suppose not so, and let there be a point $z_0=x_0+iy_0\in N\setminus \widehat{K}_{\mathcal{R}}$. By definition there is a holomorphic polynomial $P:\C^n\to \C$ such that $z_0\in P^{-1}(0)$ and $P^{-1}(0) \cap K=\emptyset.$ Due to $P^{-1}(0)$ being an $(n-1)$-dimensional analytic variety in $\C^n$, by \cite[Corollary 17, Chap. III]{gun-ros}, it therefore cannot be compact. Thus, $P^{-1}(0)$ is unbounded. Since $K\cap P^{-1}(0)=\emptyset$, the set $P^{-1}(0)$ must therefore intersect ${E:=\{||x||^2-||y||^2= (r_1^2-r_3^2)\}}$. 
	It implies ${||x_0||^2-||y_0||^2< (r_1^2-r_3^2)}$, due to that $z_0\in N$.	Choose $\eps>0$ satisfying $||x_0||^2-||y_0||^2= (r_1^2-r_3^2)-\eps,$ and consider the holomorphic function $\varphi(z):= \exp{(r_1^2-r_3^2-\eps)}/\exp{(\sum_{j=1}^{n}z_j^2)}.$ Next, we take an irreducible branch $V'$ of $P^{-1}(0)\cap N$ that contains $z_0$. 
	Clearly, it implies $|\varphi(z_0)|=1$ and  $$\max\limits_{bV'}|\varphi|= \frac{\exp{(r_1^2-r_3^2-\eps)}}{\exp{(r_1^2-r_3^2)}}=\frac{1}{\exp(\eps)}<1.$$ 
	However, since $V'$ is a connected subvariety of the domain $N$ containing $z_0$, by {Theorem \ref{gun-ross}} the modulus $|\varphi|$ cannot attain maximum on $V'.$ This leads to a contradiction. Consequently $N\subset \widehat{K}_{\mathcal{R}}$, and the result follows by taking closures on both sides.
\end{proof}

We end this section by noting that the hull mentioned in the above proposition was crucial, and it was used in \cite[Example 4.4 (a)]{haz-por} to show a generalization of Jarnicki-Pflug's result (\cite{JP2}) in $\C^2$. Moreover, we prepared this proposition for using it later in {Section \ref{sec_6}} to obtain Theorem \ref{JP2theorem} as a corollary of our {Theorem \ref{3rd_theo}}.


\section{Geometry of compact fence and special domain}\label{sec_4}
In this section, we introduce and study the notion of \textit{compact fence} and a new type of domain, namely the \textit{special domain}. We start by letting $\alpha: \R^n\to \R$ be a \textit{globally} defined $\mathcal{C}^{\omega}$ function. Clearly, there is a holomorphic function $F_{\alpha}$ in complex variables $z_j,~ 1\leq j\leq n$, (defined at least on some open neighborhood of $\R^n$ in $\C^n$) and a real-valued power series $f_{\alpha}(x,y)$ such that $F_{\alpha}|_{\R^n}(x+iy)=\alpha(x),$ and  $\textsf{Re}F_{\alpha}(x+iy)=\alpha(x)+f_{\alpha}(x,y)$ hold for all $x,y \in \R^n$. For notational simplicity, $f$ is written throughout instead of $f_{\alpha}$ when there is no ambiguity in the context. We call $f$ the \textit{augmenting function} of $\alpha$. Since it is known that $F_{\alpha}$ mentioned above need not always be holomorphic on $\C^n$, this situation needs to be different from our next definition. 

\begin{definition}[good $\mathcal{C}^{\omega}$ and good augmenting function]\label{good_alpha}
A $\mathcal{C}^{\omega}$ function $\alpha:\R^n \to \R$ is called a \textit{good $\mathcal{C}^{\omega}$ function} if there is a holomorphic function ${F_{\alpha}:\C^n \to \C}$ such that $\textsf{Re}F_{\alpha}(x+iy)=\alpha(x)+f(x,y)$ for all $x,y \in \R^n$. The corresponding augmenting function $f$ is called a \textit{good augmenting} function.
\end{definition}
Examples of good $\mathcal{C}^{\omega}$ functions are in particular, the real polynomials in the variables $x_j, ~1\leq j\leq n$. In the literature, it is conventional to first define a holomorphic function $F$ and later obtain $\textsf{Re}(F)$ from it, unlike our situation. The reason to first start with $\alpha$ 
is that we want to distinguish $\alpha $ and its corresponding augmenting function $f$ to define the notion of \textit{special barrier}. This reason would become clearer when we introduce our next definition.
\begin{definition}[Special barrier]\label{special_barrier}
A good $\mathcal{C}^{\omega}$ function ${\alpha: \R^n\to \R}$ (with the good augmenting function $f$) is called a \textit{special barrier} with respect to a convex set $Y\subset \R^n$, if for all fixed $y_0\in Y$ the following real Hessian matrix
\begin{equation}\label{matrix_cond}
\begin{pmatrix}
\frac{\partial^2\Big(||x||^2-\big(\alpha(x)+f(x,y_0)\big)\Big)}{\partial x_j\partial x_k}(\textbf{x})
\end{pmatrix}_{1\leq j,k \leq n}\end{equation}
is positive definite for all $\textbf{x}\in \R^n.$ In this case, we call $f$ a \textit{special augmenting} function.

\end{definition}
Before proceeding, we collect some simple examples of special barrier $\alpha$.

\begin{example}\label{ex_special_bar}
The following functions are special barriers with respect to any convex set $Y\subset \R^n.$
\begin{itemize}
\item [(i)] Let $c$ and $r_1$ be any two real constants. Define the two functions  ${\alpha_{\text{const}}:\R^n \to \R}$ and $\alpha_{\geq0}:\R^n \to \R$ respectively by $\alpha_{\text{const}}(x):=c$, and $\alpha_{\geq0}(x):=r_1^2$ for all $x\in \R^n$. Note that in both cases the special augmenting functions are equal to $0$. Therefore both $\alpha_{\text{const}}$ and $\alpha_{\geq0}$ are special barriers with respect to any convex set $Y\subset \R^n.$
\item [(ii)] For $\delta_j\in \R$ the function $\alpha_{\text{lin}}(x):=\sum_{j=1}^{n}\delta_jx_j$ is a special barrier with respect to any convex set $Y\subset \R^n.$ Note that in this case, the special augmenting function is zero too.

\item[(iii)] Consider the good $\mathcal{C}^{\omega}$ function $\alpha_{\text{mix}}(x):=\sum_{j=1}^{n-1}x_jx_{j+1}$, and note that the corresponding good augmenting function $f(x,y)=-\sum_{j=1}^{n-1}y_jy_{j+1}$. The matrix mentioned in equation (\ref{matrix_cond}) in this case, is the tridiagonal matrix with $-1, 2$, and $-1$ as entries. Due to a result by J. F. Elliott (see \cite{elli}), it is known that the eigen values of this matrix are all positive and given by $$4 \sin^2\Big(\frac{k\pi}{2(n+1)}\Big); ~ k=1,2,\cdots,n.$$
 Therefore the function $\alpha_{\text{mix}}(x)$ is a special barrier with respect to any convex set $Y\subset \R^n.$
 
\item[(iv)] If $\alpha_1, \alpha_2$ are special barriers  with respect to the convex set $Y_1$ and $Y_2$ respectively, then for any $t\in [0,1]$, the function $\alpha:=t\alpha_1+(1-t)\alpha_2$ is also a special barrier with respect to the convex set $Y_1\cap Y_2$. Note that if $f_1,f_2$ are the special augmenting functions of $\alpha_1,\alpha_2$ respectively, then the same of $\alpha$ is $tf_1+(1-t)f_2$. 
\end{itemize}	
\end{example}

Some more examples of special barriers are given in the following proposition.
\begin{proposition}\label{prop_special_bar}
Let $Y$ be any convex set in $\R^n.$ Then any good $\mathcal{C}^{\omega}$ function $\alpha$ with the corresponding good augmenting function $f$, satisfying the following two properties
\begin{itemize}
	\item[(a)] $\frac{\partial^2\big(\alpha(x)+f(x,y_0)\big)}{\partial x_j^2}(\mathbf{x})<2$ for all $\mathbf{x}\in \R^n$, $y_0\in Y$ and $1\leq j\leq n$,
	\item [(b)] $\frac{\partial^2\big(\alpha(x)+f(x,y_0)\big)}{\partial x_j\partial x_k }(\mathbf{x})=0$ for all $\mathbf{x}\in \R^n$, $y_0\in Y$ and $ j\neq k$, $1\leq j,k\leq n$,
\end{itemize}
is a special barrier with respect to $Y$. \\

In particular, if $N_0,\beta_j, \delta_j\in \R$ with ${1\leq j\leq n}$ such that $N_0\beta_j<1$ for all $1\leq j\leq n$, then the function $\alpha:\R^n \to \R$ defined by
$$ \alpha(x):=N_0\big(\sum_{j=1}^{n}\beta_jx_j^2-\sum_{j=1}^{n}\delta_jx_j\big) $$ is a special barrier with respect to the open ball $B_R(0)$, for all $R\in \R^+.$
\end{proposition}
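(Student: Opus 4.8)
The plan is to reduce positive-definiteness of the Hessian in Definition \ref{special_barrier} to the two pointwise hypotheses, exploiting that the $||x||^2$ term contributes a constant multiple of the identity. Fixing $y_0\in Y$ and writing $g(x):=\alpha(x)+f(x,y_0)$, I would record that $\partial^2||x||^2/\partial x_j\partial x_k$ equals $2$ when $j=k$ and $0$ otherwise, so the Hessian of $||x||^2$ is $2I_n$ and the matrix in (\ref{matrix_cond}) is
\[
H(\mathbf{x}) = 2 I_n - \mathrm{Hess}_x\, g(\mathbf{x}),
\qquad \big(\mathrm{Hess}_x\, g\big)_{jk} = \frac{\partial^2 g}{\partial x_j\partial x_k}(\mathbf{x}).
\]
Hypothesis (b) forces every off-diagonal entry of $H(\mathbf{x})$ to vanish, so $H(\mathbf{x})$ is diagonal, while hypothesis (a) makes each diagonal entry $2-\partial^2 g/\partial x_j^2(\mathbf{x})$ strictly positive. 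A real diagonal matrix with strictly positive diagonal entries is positive definite; since this holds at every $\mathbf{x}\in\R^n$ and every $y_0\in Y$, the defining condition of a special barrier is met.

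For the second assertion I would first pin down the augmenting function explicitly. Since $\alpha$ is a real polynomial, the entire function $F_\alpha(z)=N_0\big(\sum_{j}\beta_j z_j^2-\sum_{j}\delta_j z_j\big)$ restricts to $\alpha$ on $\R^n$, and expanding $(x_j+iy_j)^2=x_j^2-y_j^2+2ix_jy_j$ gives
\[
\textsf{Re}\,F_\alpha(x+iy) = \alpha(x) - N_0\sum_{j=1}^{n}\beta_j y_j^2,
\]
so the good augmenting function is $f(x,y)=-N_0\sum_j\beta_j y_j^2$. The crucial observation is that $f$ is independent of $x$, hence contributes nothing to the $x$-Hessian: $g(x)=N_0\sum_j\beta_j x_j^2-N_0\sum_j\delta_j x_j$ up to an additive constant, whence $\partial^2 g/\partial x_j^2=2N_0\beta_j$ and $\partial^2 g/\partial x_j\partial x_k=0$ for $j\neq k$. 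Thus (b) holds automatically, and (a) reads $2N_0\beta_j<2$, i.e.\ $N_0\beta_j<1$, which is precisely the standing hypothesis. As $B_R(0)$ is convex, the first part applies verbatim to conclude that $\alpha$ is a special barrier with respect to $B_R(0)$ for every $R\in\R^+$.

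I do not expect a genuine obstacle here: once one recognizes that condition (b) diagonalizes the Hessian, positive-definiteness collapses to the scalar inequalities of condition (a), and the argument is elementary linear algebra. The only point demanding care is the bookkeeping in the particular case—computing $F_\alpha$ and verifying that its augmenting function depends on $y$ alone, so that the constraint $y_0\in B_R(0)$ (and hence the value of $R$) genuinely drops out of the $x$-Hessian and the conclusion holds uniformly in $R$.
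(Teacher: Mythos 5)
Your proposal is correct and follows essentially the same route as the paper: diagonalize the Hessian via condition (b), reduce positive-definiteness to the scalar inequalities of condition (a), and in the particular case compute $F_\alpha$ explicitly to find $f(x,y)=-N_0\sum_j\beta_j y_j^2$ so that the hypotheses reduce to $N_0\beta_j<1$. No issues.
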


\begin{proof}
	The first part follows from the observation that the following matrix
\begin{equation*}
A=
	\begin{pmatrix}
	2-\frac{\partial^2(\alpha(x)+f(x,y_0))}{\partial x_1^2}(\mathbf{x}) & 0  &\ldots& 0\\
	0 & 2-\frac{\partial^2(\alpha(x)+f(x,y_0))}{\partial x_2^2}(\mathbf{x}) &\ldots& 0\\	
\vdots & \vdots &\vdots& \vdots\\
0 & 0 &\ldots& 2-\frac{\partial^2(\alpha(x)+f(x,y_0))}{\partial x_n^2}(\mathbf{x})	
\end{pmatrix}
\end{equation*}
has all positive eigenvalues given by $\lambda_j:=\big(2-\frac{\partial^2(\alpha(x)+f(x,y_0))}{\partial x_j^2}(\mathbf{x})\big)$ for all $\mathbf{x}\in \R^n$, $y_0\in Y$, and $1\leq j\leq n.$ This implies that for all $\textbf{v}:=(v_1,\ldots,v_n)\in \R^n\setminus\{0\}$, the quantity $\textbf{v}A \textbf{v}^t=\sum_{j=1}^n \lambda_j v_j^2>0.$ Hence, the matrix mentioned in equation (\ref{matrix_cond}) is positive definite. The particular case follows by first letting $Y=B_R(0)$ and noting that
	$$F_{\alpha} (z)=N_0(\sum_{j=1}^{n}\beta_jz_j^2-\sum_{j=1}^{n}\delta_jz_j),$$ and $$\textsf{Re}(F_{\alpha})(x,y)=N_0\Big(\sum_{j=1}^{n}(\beta_jx_j^2-\delta_jx_j)-\sum_{j=1}^{n}\beta_jy_j^2\Big).$$
	Thus the augmenting function $f(x,y)=-N_0\sum\limits_{j=1}^{n}\beta_jy_j^2$, and consequently 
	\begin{itemize}
		\item[(a)] $\frac{\partial^2\big(\alpha(x)+f(x,y_0)\big)}{\partial x_j^2}(\mathbf{x})=2N_0\beta_j<2,~\text{for all } \mathbf{x}\in \R^n, ~y_0\in B_R(0).$\\
		\item [(b)] $\frac{\partial^2\big(\alpha(x)+f(x,y_0)\big)}{\partial x_j\partial x_k }(\mathbf{x})=0, ~ \text{for all } \mathbf{x}\in \R^n,~  y_0\in B_R(0).$\\
		
	\end{itemize}
	Thus $\alpha$ defined above is a special barrier with respect to $B_R(0)$ for all $R\in \R^+.$ 
\end{proof}

In order to define special domain, we introduce the following two notions.
\begin{definition}[Compact fence]\label{compact-fence}
For a special barrier $\alpha$ with respect to a convex domain $Y\subset \R^n, n\geq 2$, and the corresponding special augmenting function $f$, we denote 
\begin{equation}\label{kalpha}
	K_{\alpha} := \{x+iy \in \R^n+i \overline{Y} : ||x||^2\leq \alpha(x)+f(x,y)\}\subset \C^n.
\end{equation}
We call $K_{\alpha}$ the \textit{compact fence given by} $\alpha+f$. 
\end{definition}
\begin{definition}\label{convex-initial-domain}
Let $\alpha, K_{\alpha}$, and $Y$ as in Definition \ref{compact-fence}. We call a bounded \textit{strictly convex} domain $X$ in $\R^n$ with $\mathcal{C}^2$ boundary a \textit{convex initial domain} if 
\begin{equation}\label{cvx_initial}
 K_{\alpha} \cap \big(bX+i \overline{Y}\big) =\emptyset.
\end{equation}
\end{definition}
Note that if $X$ is any convex initial domain, then any bounded strictly convex domain $X'$ with $\mathcal{C}^2$ boundary containing $X$ is also a convex initial domain.

\begin{definition}[Special domain]\label{special_domain}
For a special barrier $\alpha$ with respect to a convex domain $Y$, and the special augmenting function $f$, let $K_{\alpha}$ be the compact fence given by $\alpha+f$.  Then for a convex initial domain $X$ we call the following domain
	\begin{equation}\label{salpha}
		S_{\alpha}:=(X+iY)\setminus K_{\alpha}
	\end{equation}
a \textit{special domain} in $\C^n.$ 
\end{definition}
The following proposition motivates us to study special domains further.
\begin{proposition}
Every pseudoconvex truncated tube domain is a special domain for some special barrier $\alpha$. The converse is false.
\end{proposition}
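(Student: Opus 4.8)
The plan is to prove the two assertions separately: first that a pseudoconvex truncated tube $D=X_0+iY_0$ must have both factors $X_0,Y_0$ convex, then to realize such a $D$ as a special domain through an empty compact fence, and finally to exhibit a special domain that is not a pseudoconvex tube.

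For the forward inclusion, the first step is to note that $D=X_0+iY_0$ is, in the real coordinates $(x,y)$, exactly the product $X_0\times Y_0\subset\R^n\times\R^n=\C^n$, so its smooth boundary splits into two strata: points with $x\in bX_0$ (cut out by a function $\rho_{X_0}(x)$ depending only on $x$) and points with $y\in bY_0$ (cut out by $\rho_{Y_0}(y)$). On the first stratum I would compute the Levi form of $\rho_{X_0}$: since $\partial\rho_{X_0}/\partial z_j=\tfrac12\,\partial\rho_{X_0}/\partial x_j$ and $\partial^2\rho_{X_0}/\partial z_j\partial\overline z_k=\tfrac14\,\partial^2\rho_{X_0}/\partial x_j\partial x_k$, writing a complex tangent vector as $w=a+ib$ gives $\mathcal{L}(w,w)=\tfrac14\big(\mathrm{Hess}_x\rho_{X_0}(a,a)+\mathrm{Hess}_x\rho_{X_0}(b,b)\big)$ with $a,b$ ranging over the real tangent space $T_x(bX_0)$. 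Hence the Levi form is positive semidefinite precisely when $\mathrm{Hess}_x\rho_{X_0}$ is positive semidefinite on $T_x(bX_0)$, i.e.\ exactly when $X_0$ is convex at $x$; the symmetric computation on the second stratum forces $Y_0$ to be convex. Thus pseudoconvexity of $D$ amounts to convexity of both factors.

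Given this, the realization is immediate once the regularity demanded by a convex initial domain is available: taking $Y:=Y_0$ and the negative-constant special barrier $\alpha\equiv c$ with $c<0$ from Example \ref{ex_special_bar}(i) (whose augmenting function vanishes, $f\equiv0$), the compact fence of Definition \ref{compact-fence} becomes $K_\alpha=\{x+iy\in\R^n+i\overline Y:\|x\|^2\le c\}=\emptyset$. The condition $K_\alpha\cap(bX+i\overline Y)=\emptyset$ of Definition \ref{convex-initial-domain} then holds vacuously, so with $X:=X_0$ Definition \ref{special_domain} yields $S_\alpha=(X_0+iY_0)\setminus\emptyset=D$. I expect the main obstacle to lie exactly here: the Levi computation only delivers convexity of $X_0$, whereas a convex initial domain must be \emph{strictly} convex with $\mathcal C^2$ boundary. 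For strictly convex, smoothly bounded bases the argument closes directly; for a merely convex (e.g.\ polyhedral) base one cannot in general carve $X_0$ out of a strictly convex $X$ by deleting a single convex-sliced fence, so the clean statement is obtained under the strict-convexity/$\mathcal C^2$ regularity built into Definition \ref{convex-initial-domain}, the general convex case being handled by an exhaustion.

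For the falsity of the converse I would produce a special domain that is not a pseudoconvex tube. The simplest choice is a special barrier with nonzero augmenting function: take $F_\alpha(z)=-\varepsilon\sum_j z_j^2$, so that $\alpha(x)=-\varepsilon\|x\|^2$ and $f(x,y)=\varepsilon\|y\|^2$, which makes the matrix (\ref{matrix_cond}) equal to $2(1+\varepsilon)I$ and hence positive definite for every $\varepsilon>0$; the resulting fence $K_\alpha=\{\|x\|^2\le\tfrac{\varepsilon}{1+\varepsilon}\|y\|^2\}$ has $x$-slices that genuinely vary with $y$, so $S_\alpha$ is not a product and therefore not a tube at all, let alone a pseudoconvex one. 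Alternatively, the Jarnicki--Pflug annulus tube of Theorem \ref{JP2theorem} is itself the special domain coming from $\alpha\equiv r_1^2$ (with fence $\overline{B_{r_1}}\times\overline{B_{r_3}}$), yet it is non-pseudoconvex because its envelope is strictly larger; either example shows that the special domains form a strictly larger class.
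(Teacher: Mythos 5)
Your argument is correct in substance and follows the same overall strategy as the paper: realize a pseudoconvex tube as a special domain via an empty compact fence coming from a negative constant barrier, and refute the converse by exhibiting special domains that are not pseudoconvex tubes. The differences are worth recording. For the forward direction you re-derive the convexity of both factors by a Levi-form computation on the two smooth boundary strata of $X_0\times Y_0$; the paper instead simply cites \cite[Theorem 1.1]{egm}, which has the advantage of covering tubes with non-smooth boundary, where the stratum-by-stratum Levi computation is only a heuristic (pseudoconvexity must then be read through a plurisubharmonic exhaustion). Your computation is the classical one and is fine as far as it goes, but the citation is the cleaner route to full generality. For the converse, your barrier $\alpha(x)=-\varepsilon\|x\|^2$ with fence $\{(1+\varepsilon)\|x\|^2\le\varepsilon\|y\|^2\}$ is a valid example of a special domain that is not a tube (the $x$-slices vary with $y$), and logically one such example suffices; the paper makes the slightly stronger point by giving two separate examples, one showing a special domain need not be a tube at all (fence $=\{0\}$, giving a punctured polydisc-like domain) and one showing that even a special domain which \emph{is} a tube need not be pseudoconvex (the annulus tube of Theorem \ref{JP2theorem}, which you also mention). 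Finally, the obstacle you flag is real: Definition \ref{convex-initial-domain} requires the initial domain to be bounded, strictly convex and $\mathcal{C}^2$-bounded, whereas a general pseudoconvex tube has merely convex $X_0$; the paper's own proof passes over this silently by declaring $X$ a convex initial domain because the fence is empty. Your proposed repair by exhaustion is not developed (a special domain is defined with a single initial domain, not a limit of them), but you have correctly located a gap that lies in the interaction of the proposition with the definitions rather than in your argument, and the paper's proof shares it.
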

\begin{proof}
By \cite[Theorem 1.1]{egm}, every pseudoconvex truncated tube domain $D$ is of the form $X+iY$, where both $X$ and $Y$ are convex domains in $\R^n.$ Choose $c\in \R\setminus \{0\}$, and let $K_{\alpha_{-}}$ be the compact fence obtained from the special barrier $\alpha_{-}$ defined by $\alpha_{-}(x):=-c^2$ for all $x\in \R^n$. Note that $K_{\alpha_{-}}$ in this case is an empty set, and thus $X$ is a convex initial domain by Definition \ref{convex-initial-domain}. This shows $$D=(X+iY)\setminus \emptyset=(X+iY)\setminus K_{\alpha_{-}},$$ 
and therefore a special domain for the special barrier $\alpha_{-}.$

For the converse, we first prove that every special domain need not be a truncated tube domain. Let $X=B_{r_2}(0)$, $Y=B_{r_3}(0)$ be the two open balls in $ \R^n $ of radii $r_2,r_3\in \R^+$ and centered at $0.$ Next, we consider the following special barrier $\alpha(x):=\frac{1}{2}\sum_{j=1}^{n}x_j^2$ (Proposition \ref{prop_special_bar}) with $\beta_j=1,~\delta_j=0$ and $N_0=\frac{1}{2}.$ From the proof of the same proposition, it follows that the augmenting function $f(x,y)=-\frac{1}{2}\sum_{j=1}^{n}y_j^2.$ Thus, the corresponding compact fence
\begin{align*}
K_{\alpha} &:=\{x+iy \in \R^n+i \overline{Y} : ||x||^2\leq \frac{1}{2}\sum_{j=1}^{n}x_j^2-\frac{1}{2}\sum_{j=1}^{n}y_j^2\}=\{0+i0\}.
\end{align*}
It implies that the special domain $S_{\alpha}:=(B_{r_2}(0)+iB_{r_3}(0))\setminus \{0\}$ is not a truncated tube domain. To verify this, we suppose $S_{\alpha}=U+iV$ for some domains ${U,V\subset \R^n}.$ For $x,y\neq0$, both the points $z=0+i\frac{r_3}{2||y||}y$ and $z'=\frac{r_2}{2||x||}x+i0$ belong to $S_{\alpha}=U+iV$, and hence $0$ must be in $U\cap V$. However, due to $0+i0\notin S_{\alpha}$, it leads to a contradiction. 

Next, we prove that every special domain need not be pseudoconvex. To verify this, we let $X,Y$ be as above, and let the special barrier be $\alpha_{\geq0}(x):=r_1^2~(r_1\neq 0)$ for all $x\in \R^n$ (see Example \ref{ex_special_bar}.(i)). Since the augmenting function in this case is zero, it implies $K_{\alpha_{\geq0}}=\{x\in \R^n: ||x||\leq r_1^2\}+i\overline{B_{r_3}(0)}$. Therefore $$S_{\alpha_{\geq0}}=\{x \in \R^n: r_1<||x||<r_2\}+i\{y \in \R^n:||y||<r_3\}.$$
Since by Theorem \ref{JP2theorem} the envelope $\mathcal{E}(S_{\alpha_{\geq 0}})$ is strictly larger than $S_{\alpha_{\geq0}}$, the result follows. 
\end{proof}
\begin{remark}
Observe that, a special domain $S_{\alpha}$ is a truncated tube domain if the special augmenting function $f$ (corresponding to special barrier $\alpha$) is independent of the variable $y$, that is if $f(x,y)=\phi(x)$ for all $x,y\in \R^n$, and for some function $\phi:\R^n\to \R$.
\end{remark}

In \cite{haz-por}, the notion of strict convexity (see in Definition \ref{def-st-convexity}) in the $x$-direction was crucial to prove their theorem of schlichtness in complex dimension 2. Here at this point, we emphasize that our condition of $\alpha$ being a special barrier is also very natural and valid. The legitimacy of it becomes clearer in the next remark.

\begin{remark}\label{remark_strcvx}
For a fixed $y'\in \overline{Y} $, denote $$K_{\alpha}^{y'}:=\{x+iy' \in \R^n+i \overline{Y} : ||x||^2< \alpha(x)+f(x,y')\}.$$ If $bK_{\alpha}^{y'}$ is $\mathcal{C}^{\infty}$-smooth, then for any tangent vector ${\sum_{j=1}^{n} a_j \frac{\partial}{\partial x_j}\Big|_{\xi} \in T_{\xi}(b K_{\alpha}^{y'})\setminus \{0\}}$ with $\xi\in bK_{\alpha}^{y'}$ 
and real functions $a_j:\R^n\to \R$, the quantity $\textbf{a} H\textbf{a}^t>0$, where $\textbf{a}:=(a_1,\ldots, a_n)$ and $H$ is the matrix mentioned in equation (\ref{matrix_cond}). 
Therefore by Definition \ref{def-st-convexity}, the set $K_{\alpha}^{y'}$ is strictly convex. 
\end{remark}

The next proposition shows that under certain conditions on the special augmenting function $f$, the compact fence $K_{\alpha}$ (corresponding to a special barrier $\alpha$) is of a nice nature. To be precise, if $f$ has no ``mixed $xy$-terms" (defined later), and if $\frac{\partial^2 f}{\partial y_j^2}(x,y)\neq 0$ for all $(x,y)\in \R^n\times \R^n$, then $K_{\alpha}$ can be written as a union of sets $\mathcal{K}_{s}^{\alpha}$, each of which is a real hypersurface of class $\mathcal{C}^{\infty}$ in $\C^n$ except possibly, at the set of isolated singularities $\textsf{sing}(\mathcal{K}_{s}^{\alpha})$. It is worth noting that the conditions on $f$ mentioned in Proposition \ref{lem_crit} are satisfied for all the special functions $\alpha$ mentioned in Examples \ref{prop_special_bar} when $N_0\neq 0$ and $\beta_j\neq 0$ for all $1\leq j\leq n.$ Combining it with Examples \ref{ex_special_bar}, a typical function $\alpha$ in $\R^2$ can be for example $\alpha(x_1,x_2):=\frac{11}{12}(\frac{3}{5}x_1^2-\frac{1}{5}x_2^2-\sqrt{2}x_1+\frac{21}{\sqrt{5}}x_2)+\frac{1}{12}x_1x_2.$

\begin{proposition}\label{lem_crit}
	Let $\alpha$ be a special barrier with respect to the convex domain $Y$, with the corresponding special augmenting function $f$ that has no mixed $xy$-terms (i.e., if ${f(x,y)=f_1(x)+f_2(y)}$ for all $x,y\in \R^n$, and for some functions $f_1,f_2$). 
	Then the $\mathcal{C}^{\infty}$ function $\sigma:\R^{2n}\to \R$ defined by  $$\sigma(x,y):=||x||^2-\big(\alpha(x)+f(x,y)\big) ~\text{ for all } x, y\in \R^n,$$ is a Morse function on $\R^n+iY$. Moreover, if we denote $\mathcal{K}_{s}^{\alpha}:=\sigma^{-1}(s)\cap(\R^n+i\overline{Y})$ for all $s\in \R$, then the compact fence $K_{\alpha}=\cup_{s\leq 0}\mathcal{K}_{s}^{\alpha}$, such that each $\mathcal{K}_{s}^{\alpha}\cap(\R^n+iY)$ is a real $\mathcal{C}^{\infty}$ hypersurface except possibly at the set of isolated singularities. 
\end{proposition}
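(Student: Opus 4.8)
The plan is to exploit the fact that the no-mixed-terms hypothesis $f(x,y)=f_1(x)+f_2(y)$ splits $\sigma$ additively as $\sigma(x,y)=g(x)-f_2(y)$, where $g(x):=||x||^2-\alpha(x)-f_1(x)$ depends only on $x$. First I would record that, because the variables separate, the full real Hessian of $\sigma$ is block-diagonal: the mixed block $\big(\partial^2\sigma/\partial x_j\partial y_k\big)$ vanishes identically, the upper-left block is $\big(\partial^2 g/\partial x_j\partial x_k\big)$, and the lower-right block is $-\big(\partial^2 f_2/\partial y_j\partial y_k\big)$. The key observation is that for any fixed $y_0\in Y$ the term $f_2(y_0)$ is constant in $x$, so $||x||^2-\big(\alpha(x)+f(x,y_0)\big)=g(x)-f_2(y_0)$ has the same $x$-Hessian as $g$; hence the upper-left block coincides with the matrix in equation (\ref{matrix_cond}). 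The special-barrier hypothesis on $\alpha$ therefore makes this block positive definite at every $x\in\R^n$, in particular invertible everywhere.

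Next I would locate the critical points of $\sigma$. A critical point satisfies $\nabla_x\sigma=\nabla g=0$ together with $\nabla_y\sigma=-\nabla f_2=0$; strict convexity of $g$ (its Hessian is everywhere positive definite) forces the $x$-coordinate to be the unique critical point $x^\ast$ of $g$, if one exists, while the $y$-coordinate ranges over $\{y\in Y:\nabla f_2(y)=0\}$. At any such point $(x^\ast,y^\ast)$ the block-diagonal Hessian is non-singular precisely when the $y$-block $-\big(\partial^2 f_2/\partial y_j\partial y_k\big)(y^\ast)$ is non-singular. Invoking the stated condition on the $y$-behaviour of $f$, namely $\partial^2 f/\partial y_j^2\neq 0$ (which for the relevant augmenting functions, whose $y$-Hessian is diagonal, forces that block to be invertible), I would conclude that every critical point of $\sigma$ is non-degenerate, that is, $\sigma$ is a Morse function on $\R^n+iY$ (cf.\ \cite{milnor}).

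For the second assertion, the decomposition is immediate from Definition \ref{compact-fence}: since $\sigma\le 0$ is equivalent to $||x||^2\le\alpha(x)+f(x,y)$, we have $K_{\alpha}=\{\sigma\le 0\}\cap(\R^n+i\overline{Y})=\bigcup_{s\le 0}\big(\sigma^{-1}(s)\cap(\R^n+i\overline{Y})\big)=\bigcup_{s\le 0}\mathcal{K}_s^{\alpha}$. For the hypersurface structure I would argue level-set by level-set: at any point of $\mathcal{K}_s^{\alpha}\cap(\R^n+iY)$ where $\nabla\sigma\neq 0$, the implicit function theorem exhibits $\mathcal{K}_s^{\alpha}$ locally as a $\mathcal{C}^{\infty}$ real hypersurface of $\C^n$. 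The exceptional points are exactly the critical points of $\sigma$ lying on that level; since $\sigma$ is Morse, these are non-degenerate and hence isolated (by the Morse lemma a non-degenerate critical point is isolated among critical points). This gives that each $\mathcal{K}_s^{\alpha}\cap(\R^n+iY)$ is a real $\mathcal{C}^{\infty}$ hypersurface away from an isolated set of singularities.

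I expect the only genuine obstacle to be the non-degeneracy in the $y$-directions: the positive definiteness of the $x$-block is handed to us verbatim by the special-barrier definition via (\ref{matrix_cond}), so the whole burden of the Morse property is concentrated in showing that the $y$-Hessian of $f_2$ is invertible at the critical points of $f_2$, which is precisely where the hypothesis $\partial^2 f/\partial y_j^2\neq 0$ enters. Everything else --- the block structure, the identification with (\ref{matrix_cond}), the sublevel-set decomposition, and the passage from ``Morse'' to ``isolated singularities'' --- is then formal.
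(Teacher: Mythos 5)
Your skeleton is essentially the paper's: the no-mixed-terms hypothesis makes the real Hessian of $\sigma$ block-diagonal, the $xx$-block is identified with the matrix in equation (\ref{matrix_cond}) and is positive definite by the special-barrier hypothesis, and isolatedness of non-degenerate critical points is quoted from Milnor; your treatment of the second assertion (the sublevel decomposition of $K_{\alpha}$ and the implicit-function-theorem description of the regular part of each level set) is also the same as the paper's. The entire divergence is in how the $yy$-block is shown to be invertible, and that is where your proposal has a genuine gap.

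You reduce non-degeneracy to invertibility of $-\big(\partial^2 f_2/\partial y_j\partial y_k\big)$ at the critical points and then invoke the condition $\partial^2 f/\partial y_j^2\neq 0$. That condition is not among the hypotheses of Proposition \ref{lem_crit}; it appears only in the informal paragraph preceding it, so as written you are proving a different statement. Moreover, even granting it, a symmetric matrix with nonvanishing diagonal need not be invertible, so your parenthetical appeal to the $y$-Hessian being diagonal is an extra, unjustified restriction: nothing in the definition of a special barrier forces this (Example \ref{ex_special_bar}(iii), with $f_2(y)=-\sum_j y_jy_{j+1}$, already gives a non-diagonal $y$-Hessian whose diagonal entries vanish). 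The ingredient missing from your write-up is the one the paper uses to close this step with no extra hypothesis on $f_2$: pluriharmonicity of $\textsf{Re}F_{\alpha}=\alpha+f_1+f_2$ gives (equation (\ref{wirti})) $\partial^2 f_2/\partial y_j\partial y_k=-\partial^2(\alpha+f_1)/\partial x_j\partial x_k$, tying the $yy$-block to data in the $x$-variables. You should be aware, though, that your instinct that the whole burden sits in the $y$-directions is well founded: this identity yields $D=\big(\partial^2(\alpha+f_1)/\partial x_j\partial x_k\big)=2I_n-A$ rather than the relation $D=-A$ asserted in the paper (the contribution $2I_n$ of $||x||^2$ to $A$ has no counterpart in $D$), and $2I_n-A$ can be singular --- for instance for constant or linear $\alpha$, where $f_2\equiv 0$ and an entire affine subspace consists of degenerate critical points. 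So positive definiteness of $A$ alone does not force $\det D\neq 0$, which is presumably why the condition $\partial^2 f/\partial y_j^2\neq 0$ is mentioned in the surrounding discussion; but neither your argument nor a verbatim reading of the paper's supplies the invertibility of the $y$-block from the stated hypotheses.
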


\begin{proof}
By assumption, there is $F_{\alpha}$ such that  $\textsf{Re}F_{\alpha}(x+iy)=\alpha(x)+f_1(x)+f_2(y)$ for all $x,y \in \R^n$. Due to $\alpha +f$ being real part of holomorphic function $F_{\alpha}$, the Laplacian  $\frac{\partial^2(\textsf{Re}F_{\alpha})}{\partial z_j\partial \overline{z_k}}(z)=0$, for all $1\leq j,k\leq n.$ Considering  the Wirtinger operators\footnote{$\frac{\partial}{\partial z_j}:=\frac{1}{2}(\frac{\partial}{\partial x_j}-i\frac{\partial}{\partial y_j})$ and $\frac{\partial}{\partial \overline{z_j}}:=\frac{1}{2}(\frac{\partial}{\partial x_j}+i\frac{\partial}{\partial y_j})$ for $1\leq j,k\leq n.$} followed by taking the real part, this gives
$$\frac{1}{4}\Big(\frac{\partial^2(\textsf{Re}F_{\alpha})}{\partial x_j\partial {x_k}}+\frac{\partial^2(\textsf{Re}F_{\alpha})}{\partial y_j\partial {y_k}}\Big)(x,y)=0 \text{ for all } 1\leq j,k\leq n.$$
Thus for $1\leq j,k\leq n$, at a critical point $p:=(p_1,p_2)\in \R^n+iY$ of $\sigma$, it follows 
\begin{equation}\label{wirti}
	-\frac{\partial^2(\textsf{Re}F_{\alpha})}{\partial x_j\partial {x_k}}(p)=\frac{\partial^2(\textsf{Re}F_{\alpha})}{\partial y_j\partial {y_k}}(p). 
\end{equation}

Next, we need to show that the following block matrix	
\begin{equation}\label{non_deg_matrix}	\renewcommand{\arraystretch}{2.0}
	\nabla:=\begin{pmatrix}	
	\frac{\partial^2\sigma}{\partial x_j\partial x_k}(p)
	& \vline & \frac{\partial^2\sigma}{\partial x_j\partial y_k}(p) \\ 
		\hline
	 \frac{\partial^2\sigma}{\partial y_j\partial x_k}(p)  & \vline & \frac{\partial^2\sigma}{\partial y_j\partial y_k}(p)
	\end{pmatrix}_{1\leq j,k\leq n}= \begin{pmatrix}	
	A
	& \vline & B \\ 
	\hline
	C  & \vline & D
\end{pmatrix} \text{(say)}
\renewcommand{\arraystretch}{1.0}
\end{equation}
	has full rank. Equation (\ref{wirti}), it implies $D=-A$. Moreover, for $1\leq j,k\leq n$, the following are true.

$$\frac{\partial^2\sigma}{\partial x_j\partial {y_k}}(p)
=\Big(\frac{\partial}{\partial x_j}\big(-\frac{\partial f_2}{\partial y_k}\big)\Big)(p)=0,$$
and 
$$\frac{\partial^2\sigma}{\partial y_j\partial {x_k}}(p)
=\Big(\frac{\partial}{\partial y_j}\big(2x_k-\frac{\partial (\alpha+f_1)}{\partial x_k}\big)\Big)(p)=0.$$

 Due to that $\alpha$ is a special barrier with respect to $Y,$ it implies $A$ is positive definite. Thus, $\det(\nabla)=\det(A)\det(D-CA^{-1}B)=(-1)^n \det(A)^2\neq 0.$ Hence by the definition mentioned in Section \ref{sec_2}, the function $\sigma$ is Morse on $\R^n+iY$. Finally, by {\cite[Corollary 2.3]{milnor}}, all the non-degenerate critical points are isolated and the result follows for $\mathcal{K}_{s}^{\alpha}\cap(\R^n+iY)$. The identity $K_{\alpha}=\cup_{s\leq 0} \mathcal{K}_{s}^{\alpha}$ is automatic from the definition of $\mathcal{K}_{s}^{\alpha}.$
\end{proof}


\section{Envelope of special domains in higher complex dimension}\label{sec_5}
In this section, we state and prove a theorem concerning the schlichtness of special domains in $\C^n~(n\geq 2)$.  Before we do so, we establish the following variant of the maximum-modulus principle. Although the result follows as a consequence of Theorem \ref{gun-ross}, it is nevertheless of independent interest. 
Therefore for the reader's convenience, we include the following statement and a brief proof of it.
	
\begin{lemma}\label{key_max_lemma_var}
	Let $D\subset \C^n~(n\geq 2)$ be a pseudoconvex domain and $V\subset D$ be a closed analytic variety of pure dimension at least one. Then for any domain $G\Subset V$ and $f\in \mathcal{O}(D)$ (or $f\in \mathcal{O}(G)$) we have
	$$\max_{\overline{G}}|f|=\max_{bG}|f|.$$
\end{lemma}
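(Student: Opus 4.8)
The plan is to reduce the statement to the maximum-modulus principle for subvarieties already recorded as Theorem~\ref{gun-ross}. The key point is that $\overline{G}$ is compact, so $|f|$ attains its maximum at some point $p \in \overline{G} = G \cup bG$; I want to show that such a maximum can always be realized on the boundary $bG$. The dichotomy is whether the maximum is attained at an interior point of $G$ (relative to $V$) or only on $bG$. If it is attained only on $bG$, there is nothing to prove. The substance is the interior case.

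First I would set $M := \max_{\overline{G}}|f|$ and pick $p \in \overline{G}$ with $|f(p)| = M$. If $p \in bG$ we are done immediately, since then $\max_{bG}|f| \geq |f(p)| = M \geq \max_{bG}|f|$, forcing equality. So suppose instead that the maximum is attained at some interior point $p \in G$. The idea is to apply Theorem~\ref{gun-ross} on the irreducible component of $V$ through $p$: since $V$ is a closed analytic variety in the pseudoconvex (hence holomorphically convex, in particular a legitimate domain) set $D$, and $f \in \mathcal{O}(D)$ restricts to a holomorphic function on a connected subvariety $V_0 \ni p$ of $D$, the hypothesis that $|f|$ attains a maximum at the interior point $p$ of $V_0$ forces $f$ to be constant on $V_0$ by Theorem~\ref{gun-ross}. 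One must be slightly careful: I would take $V_0$ to be the connected component of $V$ containing $p$ (or, if one prefers to invoke irreducibility, a connected neighborhood within $V$), and note that $G$, being a domain, is connected, so it lies in a single connected component of $V$.

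Once $f$ is constant on the component $V_0$ through $p$, it is constant on the connected set $\overline{G} \cap V_0$ by analytic continuation, and in particular $|f| \equiv M$ on $\overline{G}$. Then every boundary point $q \in bG$ also satisfies $|f(q)| = M$, so $\max_{bG}|f| = M = \max_{\overline{G}}|f|$, which is the desired identity. The case $f \in \mathcal{O}(G)$ is handled identically: since $G \Subset V$ and $f$ is holomorphic on the subvariety $G$, the same maximum-modulus theorem applies to the connected variety $G$ itself, giving constancy on $G$ whenever the interior maximum occurs, and hence the boundary equality again.

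The main obstacle I anticipate is purely bookkeeping about connectedness and the applicability of Theorem~\ref{gun-ross}: one needs $G$ (and hence $\overline{G} \setminus bG$) to lie in a single connected subvariety so that constancy on the component propagates to all of $\overline{G}$, and one must confirm that the hypotheses of Theorem~\ref{gun-ross} are met, namely that the relevant set is a \emph{connected subvariety} of a \emph{domain} and that $f$ is holomorphic there. Since $V$ has pure dimension at least one, the component $V_0$ is a genuine positive-dimensional subvariety, so Theorem~\ref{gun-ross} applies without degenerate (zero-dimensional) exceptions. No hard estimates are required; the entire argument is a clean reduction to the cited maximum principle, with the only care needed being the connectedness argument ensuring that an interior maximum propagates to the whole of $\overline{G}$ and thus to $bG$.
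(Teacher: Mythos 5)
Your overall strategy is the same as the paper's: split according to where $|f|$ attains its maximum on the compact set $\overline{G}$, and in the interior case reduce to the maximum-modulus principle for subvarieties, Theorem~\ref{gun-ross}, to force $f$ to be constant and hence push the maximum to $bG$. The propagation-of-constancy step (connectedness of $G$, hence of $\overline{G}$, so that $|f|\equiv M$ on $\overline{G}$ and in particular on $bG$) is fine. However, both of your invocations of Theorem~\ref{gun-ross} are made in situations where its hypotheses, as stated in the paper, are not actually verified.

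First, in the case $f\in\mathcal{O}(D)$ you apply the theorem to $V_0$, the connected component of $V$ through the interior maximum point $p$. But Theorem~\ref{gun-ross} requires $|f|$ to attain a maximum \emph{on the subvariety to which it is applied}; you only know $|f(p)|=\max_{\overline{G}}|f|$, and $\overline{G}$ may be a small proper subset of $V_0$ on which $|f|$ is not maximal --- nothing prevents $|f|$ from being larger on $V_0\setminus\overline{G}$. What you actually have at $p$ is only a local maximum relative to $V_0$. The paper sidesteps this by shrinking the ambient domain: it chooses $D'\subset D$ with $G=D'\cap V$, so that $G$ itself is a connected closed subvariety of the domain $D'$ and the point $z_0$ genuinely maximizes $|f|$ over that subvariety; the theorem then applies verbatim and yields constancy on $G$. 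Second, in the case $f\in\mathcal{O}(G)$ you assert that the same theorem applies to $G$ itself, but Theorem~\ref{gun-ross} is stated for $f$ holomorphic on an ambient domain in $\C^n$, not for a function holomorphic merely on a variety. The paper handles this by passing to an ambient neighborhood $G'$ of the maximum point on which $f$ extends to $\widetilde f\in\mathcal{O}(G')$ with $V\cap G'\subset G$, applying the theorem to each irreducible component of $V\cap G'$, and then spreading constancy over the connected set $G$. Both repairs are routine and your conclusion is correct, but as written the two key applications of the cited theorem do not match its hypotheses.
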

\begin{proof}
First consider the case when $f\in \mathcal{O}(D)$.
Since $\overline{G}$ is compact, there exists $z_0\in \overline{G}$ when $|f|\big|_{\overline{G}}$ attains a maximum. If $z_0\in bG$ the result follows. Otherwise, if $z_0\in G,$ choose $D'\subset D$ such that $G=D'\cap V$, i.e. $G=D'\cap V$ is a closed subvariety of $D'$ and $f\in \mathcal{O}(D').$ Since $G$ is a domain (and hence connected), we apply Theorem \ref{gun-ross} to get the desired conclusion.

Next, consider the case when $f\in \mathcal{O}(G)$. Suppose $|f|$ attains maximum at $z_0\in G$. We choose an ambient neighborhood $G'$ of $z_0$ so that $f$ extends to $\widetilde{f}\in \mathcal{O}(G')$ and $V\cap G'\subset G.$ Write $V\cap G'=V_1 \cup V_2\cup \cdots \cup V_k$, where $V_1,\ldots,V_k$ are the irreducible components of $V\cap G'$. By Theorem \ref{gun-ross}, the function $f$ becomes constant on each $V_j$ for $1\leq j\leq k,$ and hence becomes constant on each irreducible components of $G.$ Finally, since $G$ is connected, it follows that $f|_G$ is constant.
\end{proof}

Next, we state and prove one of our main theorems, which is a sharp generalization of Theorem \ref{JP2theorem} (see \cite[Theorem 1]{JP2}) for dimension $n\geq 2$, and also an $n$-dimensional variant of \cite[Theorem 1.2]{haz-por}. Step I of the proof of our theorem is a broader modification of \cite[Example 4.4.(a)]{haz-por} adapted to our general situation, and the remaining steps are done using multiple methods, including Sard's theorem and \cite[Theorem 5.2.11]{stout}. Note that the situation was easier and rather straightforward in {\cite[Example 4.4. (a)]{haz-por}} due to the leaves of the foliations of the level hypersurface mentioned there have the ``boundary" in the horizontal part of the truncated tube, and have only one isolated singularity at the origin. However, our situation is subtle due to the possible involvement of multiple singularities, and also because a special domain $S_{\alpha}$ in general may not be a truncated tube domain. Nevertheless, the maximum modulus principle on the analytic variety ({Lemma \ref{key_max_lemma_var}}) helps us to surmount the difficulty. 

\begin{theorem} \label{1st_theo}
Let $\alpha$ be a special barrier with respect to $Y=B_R(0)\subset \R^n~(n\geq 2)$, and let $f$ be its special augmenting function. Let $X\subset \C^n$ be a convex initial domain, set $D=X+iY$, and let $K_{\alpha}$ be the compact fence given by $\alpha+f$ as defined in (\ref{kalpha}). Then the envelope of holomorphy $\mathcal{E}(S_{\alpha})$ of the special domain ${S_{\alpha}:=D \setminus K_{\alpha}}$ is given by $$\mathcal{E}(S_{\alpha})=D\backslash\widehat{K_{\alpha, \partial}}.$$ Here  $\widehat{K_{\alpha,\partial}}$ denotes the polynomial hull of ${K_{\alpha,\partial}}:=K_{\alpha}\cap (X+ibY)$. In particular, the envelope of holomorphy is schlicht.
\end{theorem}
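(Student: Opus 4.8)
The plan is to realize the purported envelope as an explicit sublevel set of a pluriharmonic function and then to verify the three ingredients that pin it down. I would introduce the entire function $G(z):=\sum_{j=1}^n z_j^2-F_\alpha(z)$, so that $\textsf{Re}\,G(x+iy)=\|x\|^2-\|y\|^2-\alpha(x)-f(x,y)=\sigma(x,y)-\|y\|^2$, where $\sigma$ is the Morse function of Proposition~\ref{lem_crit}, and set $\Omega:=D\cap\{\textsf{Re}\,G>-R^2\}$. A direct check gives $S_\alpha\subseteq\Omega$ (on $S_\alpha$ one has $\sigma>0$ and $\|y\|<R$, hence $\textsf{Re}\,G>-\|y\|^2>-R^2$), while on the boundary piece $K_{\alpha,\partial}\subset\{\|y\|=R\}$ one has $\textsf{Re}\,G=\sigma-R^2\le-R^2$. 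I would then reduce the theorem to three claims: (A) $\Omega$ is pseudoconvex and connected; (B) every $h\in\mathcal{O}(S_\alpha)$ extends holomorphically to $\Omega$; and (C) $\widehat{K_{\alpha,\partial}}\cap D=\{z\in D:\textsf{Re}\,G(z)\le-R^2\}$, so that $\Omega=D\setminus\widehat{K_{\alpha,\partial}}$. Granting these, $\Omega$ is a schlicht domain of holomorphy containing $S_\alpha$ to which all holomorphic functions extend, and exactly as in the concluding step of Theorem~\ref{stout_theo} this forces $\mathcal{E}(S_\alpha)=\Omega=D\setminus\widehat{K_{\alpha,\partial}}$, in particular schlicht.

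Claim (A) is the soft part. Since $-\textsf{Re}\,G$ is pluriharmonic, hence plurisubharmonic, the set $\{-\textsf{Re}\,G<R^2\}$ is pseudoconvex, and $D=(X+i\R^n)\cap(\R^n+iY)$ is an intersection of two convex tube domains and so is pseudoconvex; thus $\Omega$, as an intersection of two pseudoconvex domains, is pseudoconvex, and connectedness follows from the geometry of $D$ together with the fact that $\widehat{K_{\alpha,\partial}}$ is attached to $\partial D$. The easy half of (C), namely $\widehat{K_{\alpha,\partial}}\cap D\subseteq\{\textsf{Re}\,G\le-R^2\}$, is read off from $e^{G}$: it satisfies $|e^G|\le e^{-R^2}$ on $K_{\alpha,\partial}$ while $|e^G(z_0)|>e^{-R^2}$ whenever $\textsf{Re}\,G(z_0)>-R^2$, and since the polynomial hull coincides with the hull with respect to entire functions (uniform limits of polynomials on compacta) this places such $z_0$ outside $\widehat{K_{\alpha,\partial}}$ (points with $\|y\|>R$, irrelevant for the intersection with $D$, are excluded analogously using $e^{-i\langle a,z\rangle}$).

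The reverse inclusion in (C) is a direct generalization of Proposition~\ref{both_hull_same}. For $z_0\in D$ with $\textsf{Re}\,G(z_0)<-R^2$, if $z_0$ lay outside the hull one would obtain a polynomial $P$ with $P(z_0)=0$ whose zero set misses $K_{\alpha,\partial}$; this zero set is an unbounded pure $(n-1)$-dimensional variety (\cite[Corollary 17, Chap. III]{gun-ros}), and the convex-initial-domain condition \eqref{cvx_initial} guarantees that $\{\textsf{Re}\,G<-R^2\}\cap D$ does not meet the lateral boundary $bX+i\overline{Y}$, so an irreducible branch $V'$ through $z_0$ can only exit this region across $\{\textsf{Re}\,G=-R^2\}$. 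Testing $V'$ with $\varphi:=\exp(-R^2-\epsilon-G)$, chosen so that $|\varphi(z_0)|=1$ while $|\varphi|=e^{-\epsilon}<1$ on $bV'$, contradicts the maximum-modulus principle on subvarieties (Theorem~\ref{gun-ross}, equivalently Lemma~\ref{key_max_lemma_var}); taking closures yields $\{z\in D:\textsf{Re}\,G\le-R^2\}\subseteq\widehat{K_{\alpha,\partial}}$.

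The extension statement (B) is the main obstacle. The region to be filled is $W:=K_\alpha\setminus\widehat{K_{\alpha,\partial}}=\{z\in D:-R^2<\textsf{Re}\,G\le-\|y\|^2\}$, and I would fill it by a continuity argument along the Levi-flat foliation $\{\textsf{Re}\,G=c\}$, $c\in(-R^2,0]$, whose leaves are the complex $(n-1)$-dimensional varieties $\{G=c+it\}$ (Theorem~\ref{foli}). For a leaf $L$ through a point of $W$ the relative boundary of $L\cap D$ lies in $\partial D$: along the lateral boundary one has $\sigma>0$ by \eqref{cvx_initial}, and along $\{\|y\|=R\}$ one has $\sigma=\textsf{Re}\,G+R^2>0$, so in either case a neighbourhood of $b(L\cap D)$ inside $D$ sits in $S_\alpha$, where $h$ is already defined. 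A Hartogs-type continuation across the boundaries of this sweeping family of varieties, made rigorous through \cite[Theorem 5.2.11]{stout} with the maximum principle of Lemma~\ref{key_max_lemma_var} controlling the extension on each leaf, then propagates $h$ over every leaf and hence over all of $W$. The genuine difficulty is that $\sigma$ may carry several critical points (Proposition~\ref{lem_crit}), so the leaves can be singular; I would dispatch this by invoking Sard's theorem to restrict to regular values of $\textsf{Re}\,G$ (and of $G$), running the continuity principle on the resulting smooth manifolds-with-boundary over a dense set of leaves, and recovering the exceptional leaves by continuity of the extension. Assembling (A), (B) and (C) yields $\mathcal{E}(S_\alpha)=D\setminus\widehat{K_{\alpha,\partial}}$ and its schlichtness.
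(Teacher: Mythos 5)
Your overall architecture coincides with the paper's: the same entire function $G=\sum_j z_j^2-F_\alpha$ drives everything, the hull $\widehat{K_{\alpha,\partial}}$ is identified with $\overline{D}\cap\{\textsf{Re}\,G\le -R^2\}$ by combining the separating function $e^{G}$ with the maximum principle on the complex leaves $\{G=c+it\}$ of the Levi-flat level sets, and the extension is ultimately delegated to Stout's theorem (Theorem \ref{key_theorem}). Two local choices of yours differ from the paper and are arguably cleaner: you obtain pseudoconvexity of $\Omega$ directly as an intersection of $D$ with a sublevel set of the plurisubharmonic function $-\textsf{Re}\,G$, where the paper runs Sard plus Behnke--Stein; and you prove the hard inclusion of (C) by the rational-convexity argument of Proposition \ref{both_hull_same} (a polynomial zero set through $z_0$ missing $K_{\alpha,\partial}$ must have its branch boundary on $\{\textsf{Re}\,G=-R^2\}$), where the paper argues leaf by leaf with arbitrary polynomials. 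Note only that your contradiction hypothesis should be ``$z_0$ outside the \emph{rational} hull''; since $\widehat{K}_{\mathcal{R}}\subseteq\widehat{K}$ this still yields the polynomial-hull statement you want.

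There is, however, one genuine gap and one muddled step. The gap is the connectedness of $\Omega=D\setminus\widehat{K_{\alpha,\partial}}$, which you dismiss as ``follows from the geometry of $D$''. This is a standing hypothesis of Theorem \ref{key_theorem} and is exactly where the positive-definiteness in Definition \ref{special_barrier} does real work: the paper's Claim \ref{claim_ctd} proves that each slice $\widehat{K_{\alpha,\partial}}\cap(\R^n+i\{y_0\})$ is convex because $x\mapsto\|x\|^2-\alpha(x)-f(x,y_0)$ has positive-definite Hessian, and then builds an explicit path through a deformed tube $X_\eps+iY$ avoiding the hull. Without some such argument the application of Stout's theorem (and hence the identification of the envelope) is not justified, and for a general pluriharmonic obstruction the complement of a sublevel set inside $D$ need not be connected. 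The muddled step is (B): the leaf-sweeping ``continuity principle'' and the Sard/singular-leaf discussion are not what Theorem \ref{key_theorem} provides; that theorem gives the extension in one stroke once you check that $E=\widehat{K_{\alpha,\partial}}$ is compact and $\mathcal{O}(\C^n)$-convex (automatic, since a polynomial hull is polynomially convex), that $\C^n\setminus(\overline{\Omega}\cup E)=\C^n\setminus\overline{D}$ is connected (convexity of $\overline{D}$), that $\Gamma=b\Omega\setminus E$ is a $\mathcal{C}^1$ hypersurface, and the connectedness of $\Omega$ just discussed. You should either carry out the continuity principle honestly on each leaf (including the singular ones) or drop it and verify the hypotheses of Theorem \ref{key_theorem}; as written, (B) asserts the conclusion of that theorem without establishing its most delicate hypothesis.
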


It is important to note that the statement of the above theorem also indicates that $\widehat{K_{\alpha,\partial}}$ is a subset of $K_\alpha$, which will be proved later in Step I (claim \ref{claim_4}).
The first strategy of our proof is to explicitly determine $\widehat{K_{\alpha,\partial}}$. Note that due to the presence of $\alpha$, our construction becomes subtle, although the difficulties can be surmounted in view of the real-analyticity of $\alpha.$	
\begin{proof} 
\textbf{Step I: Explicit description of $\widehat{K_{\alpha,\partial}}$.} \\

\vspace*{-.3cm} 
 
Given that $K_{\alpha} := \{x+iy \in \R^n+i \overline{Y} : ||x||^2\leq \alpha(x)+f(x,y)\}\subset \C^n.$
 We consider the following holomorphic function $$h(z_1,z_2,\ldots, z_n)=\text{exp}{\Big(\sum_{j=1}^{n}z_j^2-F_{\alpha}(z)\Big)},$$ where $F_{\alpha}$ is the holomorphic function defined in Definition \ref{good_alpha}. Observe that $h$ has modulus $|h|=\text{exp}\big({||x||^2-||y||^2-\big(\alpha(x)+f(x,y)\big)}\big)$.  
 Next for $s\in \R$, we consider the level sets of $|h|$ as follows $$\mathcal{H}_{s}^{\alpha}:=\{z=x+iy\in\C^{n}:||x||^2-||y||^2-\big(\alpha(x)+f(x,y)\big)=s\}.$$ 

Observe that $\mathcal{H}_{s}^{\alpha}$'s are real hypersurfaces of $\C^n$ except possibly at the set of singularities $\textsf{sing}(\mathcal{H}_{s}^{\alpha})$. Moreover the set $\textsf{reg}(\mathcal{H}_{s}^{\alpha})$  is Levi flat for each $s\in \R$. This is because for all $s \in \R$ 
 $$\mathcal{H}_s^{\alpha}=\Big\{z\in\C^{n}: \rho(z,\overline{z}):=\sum_{j=1}^{n}\Big(\frac{z_j+\overline{z_j}}{2}\Big)^2-\sum_{j=1}^{n}\Big(\frac{z_j-\overline{z_j}}{2i}\Big)^2-\textsf{Re}F_{\alpha}(z)=s\Big\}.$$
Due to being real part of holomorphic function the Laplacian  $\frac{\partial^2(\textsf{Re}F_{\alpha})}{\partial z_j\partial \overline{z_k}}(\xi)=0$ with $1\leq j,k\leq n$, and for $X_{\xi}=\sum_{j =1}^{n} a_j \frac{\partial}{\partial z_j}\Big\rvert_{\xi} \in T_{\xi}^{(1,0)}b \big(\textsf{reg}(\mathcal{H}_s^{\alpha})\big)$ the following sesquilinear form $$\mathcal{L}(X_{\xi}, X_{\xi}):=\sum \limits_{j,k =1}^{n} {a_j}\overline{a_k} \frac{\partial^2\rho}{\partial {z_j}\partial \overline{z}_k} (\xi)$$ is zero too. Therefore by Theorem \ref{foli} (\cite{bogg}) the set $\textsf{reg}(\mathcal{H}_s^{\alpha})$
is foliated by a {1-parameter} family of complex submanifolds of complex dimension $(n-1)$. It is nevertheless important to note that for $t\in \R$ and $s\in \R$, there exists the following disjoint complex analytic sets
\begin{equation}\label{comp_ana_sets}
\mathcal{V}_{{s}}^{\alpha,t}=\{z\in \C^n: \sum\limits_{j=1}^{n}z_j^2-F_{\alpha}(z)=s+it\}
\end{equation}
such that $\cup_{t\in \R}{}\mathcal{V}_{{s}}^{\alpha,t}=\mathcal{H}_{{s}}^{\alpha}.$ To verify this, note that any $z_0\in \mathcal{H}_{s}^{\alpha}$ belongs to $\mathcal{V}_{s}^{\alpha,\textsf{Im}\Psi(z_0)}$, where $\Psi(z):=\sum_{j=1}^{n}z_j^2-F_{\alpha}(z).$ The other inclusion is obvious as $\mathcal{V}_{s}^{\alpha,t} \subset \mathcal{H}_{s}^{\alpha},$ for all $t\in \R.$\\

Next, denote the following set $$\mathcal{A}_{\alpha}=\{z=x+iy\in \C^n: ||x||^2-||y||^2-(\alpha(x)+f(x,y))+R^2\leq 0\}.$$

Our next objective is to show that $\mathcal{A}_{\alpha} \cap \overline{D} =\widehat{K_{\alpha,\partial}}.$ 
We present the proof by distributing it among the following three technical claims.

\begin{claim}\label{claim_1}
For every $z_0=x_0+iy_0 \in \mathcal{A}_{\alpha} \cap \overline{D} $ it implies $\mathcal{H}_{s_0}^{\alpha}\cap \big(X+ibY\big)\subset {K_{\alpha, \partial}}$, where $s_0=||x_0||^2-||y_0||^2- (\alpha(x_0)+f(x_0,y_0))$. 
\end{claim}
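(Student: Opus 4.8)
The plan is to prove the claim by a direct substitution, exploiting the single fact that the ``$+R^2$'' appearing in the definition of $\mathcal{A}_{\alpha}$ is precisely calibrated to cancel the term $\|y\|^2$ when $y$ lies on $bY$. First I would unwind the hypothesis $z_0 \in \mathcal{A}_{\alpha}$: by the definition of $\mathcal{A}_{\alpha}$ and of $s_0$, this membership is exactly the inequality $s_0 + R^2 \leq 0$, i.e. $s_0 \leq -R^2$. This is the only information about $z_0$ that the argument will use; in particular, the conclusion depends on $z_0$ only through the value $s_0$, so the role of $z_0$ is merely to fix which level set $\mathcal{H}_{s_0}^{\alpha}$ we examine.

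Next I would take an arbitrary point $w = u + iv \in \mathcal{H}_{s_0}^{\alpha} \cap (X + ibY)$ and decode the two membership conditions. Membership in $\mathcal{H}_{s_0}^{\alpha}$ reads $\|u\|^2 - \|v\|^2 - \big(\alpha(u) + f(u,v)\big) = s_0$. Membership in $X + ibY$ means $u \in X$ and $v \in bY$; since $Y = B_R(0)$, its boundary is the sphere $\{\|y\| = R\}$, so this forces $\|v\|^2 = R^2$ exactly. Substituting $\|v\|^2 = R^2$ into the hypersurface equation gives $\|u\|^2 - R^2 - \big(\alpha(u) + f(u,v)\big) = s_0 \leq -R^2$, whereupon the two occurrences of $R^2$ cancel and we are left with $\|u\|^2 \leq \alpha(u) + f(u,v)$.

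Finally I would read off the conclusion. The last inequality, together with $v \in bY \subset \overline{Y}$, is exactly the defining condition for $w \in K_{\alpha}$ recorded in (\ref{kalpha}). As we also have $w \in X + ibY$ by assumption, it follows that $w \in K_{\alpha} \cap (X + ibY) = K_{\alpha,\partial}$, which is precisely what was to be shown.

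I do not anticipate a genuine obstacle here: the entire content of the claim is the cancellation of $\|v\|^2$ against $R^2$ on the boundary sphere, and it reduces to a one-line computation once the definitions are unwound. The only point worth emphasizing is that the argument uses $Y = B_R(0)$ in an essential way, so that $bY$ is a sphere of fixed radius $R$; this is exactly the hypothesis of Theorem \ref{1st_theo}, and it is what makes the constant shift $R^2$ in $\mathcal{A}_{\alpha}$ the correct one. Were $Y$ a general convex domain, $\|v\|^2$ would no longer be constant on $bY$ and this clean cancellation would fail.
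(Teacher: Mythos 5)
Your proposal is correct and is essentially the paper's own argument: the paper phrases it as a proof by contradiction (assuming a point of $\mathcal{H}_{s_0}^{\alpha}\cap(X+ibY)$ lies outside $K_{\alpha,\partial}$ and deriving $-R^2 < s_0 \leq -R^2$), but the underlying computation is exactly your cancellation of $\|v\|^2=R^2$ against the $+R^2$ in the definition of $\mathcal{A}_{\alpha}$. Your direct version, including the observation that only $s_0\leq -R^2$ is used and that $v\in bY\subset\overline{Y}$ is needed for membership in $K_{\alpha}$, is a clean equivalent.
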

\begin{claimproof}
Since $z_0 =x_0+iy_0\in \mathcal{A}_{\alpha} \cap \overline{D}$, it follows
\begin{equation}\label{claim_eq_1}
	s_0=||x_0||^2-||y_0||^2-\big(\alpha(x_0)+f(x_0,y_0)\big)\leq -R^2.
\end{equation}
Suppose the claim is not true, and let ${z'=x'+iy' \in \mathcal{H}_{s_0}^{\alpha}\cap (X+ibY)}$ be such that $z'\notin K_{\alpha, \partial}.$ It therefore implies
\begin{equation}\label{claim_eq_2}
||y'||^2=R^2 \text{ and } ||x'||^2>\alpha(x')+f(x',y'),
\end{equation}
Since $z'=x'+iy'\in \mathcal{H}_{s_0}^{\alpha}$, it implies 
\begin{equation}\label{claim_eq_3}
	||x'||^2-||y'||^2-\big(\alpha(x')+f(x',y')\big)=s_0.
\end{equation}
Using equations (\ref{claim_eq_1}), (\ref{claim_eq_2}) and (\ref{claim_eq_3}), it implies, 
$$-R^2=-||y'||^2<s_0\leq -R^2. $$
This leads to a contradiction, and hence $\mathcal{H}_{s_0}^{\alpha}\cap \big(X+ibY\big)\subset K_{\alpha, \partial}$.
\end{claimproof}

\begin{claim}\label{claim_3}
$\mathcal{A}_{\alpha} \cap \overline{D} \subset \widehat{K_{\alpha,\partial}}.$
\end{claim}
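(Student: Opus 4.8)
The plan is to realize each point $z_0=x_0+iy_0\in\mathcal{A}_\alpha\cap\overline{D}$ as lying on one of the complex analytic leaves $\mathcal{V}_{s_0}^{\alpha,t_0}$ of (\ref{comp_ana_sets}) and then to run the maximum-modulus principle (Lemma \ref{key_max_lemma_var}) along that leaf, the key point being that its relative boundary inside $\overline{D}$ is forced into $K_{\alpha,\partial}$. Concretely, recall $\Psi(z)=\sum_{j=1}^n z_j^2-F_\alpha(z)$, and set $s_0=\textsf{Re}\,\Psi(z_0)=||x_0||^2-||y_0||^2-(\alpha(x_0)+f(x_0,y_0))$ and $t_0=\textsf{Im}\,\Psi(z_0)$. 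Then $z_0\in V:=\mathcal{V}_{s_0}^{\alpha,t_0}=\Psi^{-1}(s_0+it_0)$, which is a closed, pure $(n-1)$-dimensional analytic subvariety of $\C^n$ (nonempty, and of dimension $\geq 1$ since $n\geq 2$). Membership of $z_0$ in $\mathcal{A}_\alpha$ is exactly the inequality $s_0\leq -R^2$.

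First I would show that $V$ cannot meet the lateral boundary $bX+i\overline{Y}$. Indeed, every $w=u+iv\in V\subset\mathcal{H}_{s_0}^\alpha$ satisfies $||u||^2-||v||^2-(\alpha(u)+f(u,v))=s_0\leq -R^2$; on the other hand the convex initial domain hypothesis (Definition \ref{convex-initial-domain}, that is $K_\alpha\cap(bX+i\overline{Y})=\emptyset$) forces $||u||^2>\alpha(u)+f(u,v)$ whenever $u\in bX$ and $v\in\overline{Y}$, so that $||u||^2-||v||^2-(\alpha(u)+f(u,v))>-||v||^2\geq -R^2\geq s_0$, a contradiction. Hence $V\cap(bX+i\overline{Y})=\emptyset$. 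The very same inequality, applied to $z_0$ itself, shows that $\mathcal{A}_\alpha$ misses $bX+i\overline{Y}$; consequently any point $z_0\in\mathcal{A}_\alpha\cap bD$ must have $||y_0||=R$ with $x_0\in X$, and then $||x_0||^2\leq\alpha(x_0)+f(x_0,y_0)$ gives $z_0\in K_\alpha\cap(X+ibY)=K_{\alpha,\partial}\subset\widehat{K_{\alpha,\partial}}$ directly. Thus only interior points $z_0\in\mathcal{A}_\alpha\cap D$ require further work.

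For such an interior $z_0$, let $G$ be the connected component of the relatively open set $V\cap D$ containing $z_0$. Since $\overline{D}$ is compact and $V$ is closed in $\C^n$, the closure $\overline{G}\subset V\cap\overline{D}$ is compact, so $G\Subset V$ is a domain in the variety. Its relative boundary obeys $bG\subset V\cap bD$, and because $V$ avoids $bX+i\overline{Y}$ this is contained in $V\cap(X+ibY)$; invoking Claim \ref{claim_1} (valid since $V\subset\mathcal{H}_{s_0}^\alpha$) gives $bG\subset K_{\alpha,\partial}$. Applying Lemma \ref{key_max_lemma_var} with ambient pseudoconvex domain $\C^n$ to an arbitrary polynomial $p$ then yields $|p(z_0)|\leq\max_{\overline{G}}|p|=\max_{bG}|p|\leq\max_{K_{\alpha,\partial}}|p|$, whence $z_0\in\widehat{K_{\alpha,\partial}}$ and the inclusion follows.

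I expect the main obstacle to be the first step, namely certifying that the leaf $V$ stays clear of the lateral boundary $bX+i\overline{Y}$, since this is precisely where the membership hypothesis $s_0\leq -R^2$ and the convex initial domain condition must be used in tandem; everything after that is a clean application of the maximum principle. A secondary technical point to keep honest is that $G$ is a genuine relatively compact domain in $V$, so that Lemma \ref{key_max_lemma_var} applies even through possible singular points of $V$, together with the separate (and easy) disposal of those boundary points of $\mathcal{A}_\alpha\cap\overline{D}$ that already lie in $bD$.
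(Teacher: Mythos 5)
Your proposal is correct and follows essentially the same route as the paper: decompose the level set $\mathcal{H}_{s_0}^{\alpha}$ into the analytic leaves $\mathcal{V}_{s_0}^{\alpha,t}$, use the convex initial domain condition to show the relevant leaf misses $bX+i\overline{Y}$, invoke Claim \ref{claim_1} to trap the leaf's boundary in $K_{\alpha,\partial}$, and conclude via the maximum-modulus principle of Lemma \ref{key_max_lemma_var}. Your treatment is in fact slightly more careful than the paper's in two minor respects (taking $G$ to be the connected component of $V\cap D$ containing $z_0$, and disposing separately of points of $\mathcal{A}_{\alpha}\cap bD$), but these are refinements of the same argument, not a different one.
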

\begin{claimproof}
	To prove the inequality $\mathcal{A}_{\alpha} \cap \overline{D} \subset \widehat{K_{\alpha, \partial}}$ we first consider an element $\tilde{x}+i\tilde{y} \in \mathcal{A}_{\alpha} \cap \overline{D}$. Setting $\tilde{s}=||\tilde{x}||^2-||\tilde{y}||^2- (\alpha(\tilde{x})+f(\tilde{x},\tilde{y}))$, it implies $\tilde{z}=\tilde{x}+i\tilde{y}\in\mathcal{H}_{\tilde{s}}^{\alpha}$ and $\tilde{s}\leq -R^2.$ By Claim \ref{claim_1} it therefore implies ${\mathcal{H}_{\tilde{s}}^{\alpha}\cap \big(X+ibY\big)\subset K_{\alpha, \partial}.}$ 
	Moreover, if there is any $\hat{z}=\hat{x}+i\hat{y}\in \mathcal{H}_{\tilde{s}}^{\alpha} \cap (bX+i\overline{Y})$, it implies ${||\hat{x}||^2-\big(\alpha(\hat{x})+f(\hat{x},\hat{y})\big)+(R^2-||\hat{y}||^2)\leq 0.}$ Since $R^2-||\hat{y}||^2\geq 0$, it must imply $||\hat{x}||^2-(\alpha(\hat{x})+f(\hat{x},\hat{y}))\leq 0.$ Consequently, $\hat{z}\in K_{\alpha}\cap (bX+i\overline{Y})$ contradicts the equation (\ref{cvx_initial}). Therefore
	\begin{equation}\label{h_s_and_bdX}
	\mathcal{H}_{\tilde{s}}^{\alpha} \cap (bX+i\overline{Y})=\emptyset.
	\end{equation}
Note that for $t\in \R$, there exists the following disjoint complex analytic sets $\mathcal{V}_{\tilde{s}}^{\alpha,t}$ as described in equation (\ref{comp_ana_sets}), such that $\cup_{t\in \R}{}\mathcal{V}_{\tilde{s}}^{\alpha,t}=\mathcal{H}_{\tilde{s}}^{\alpha}.$  Thus, for some $\tilde{t}\in \R$ it implies $\tilde{z}\in \mathcal{{V}}_{\tilde{s}}^{\alpha,\tilde{t}}$ and $\mathcal{V}_{\tilde{s}}^{\alpha,\tilde{t}}\cap (X+ibY) \subset K_{\alpha, \partial}.$ Moreover, by equation (\ref{h_s_and_bdX}), it follows $\mathcal{V}_{\tilde{s}}^{\alpha,\tilde{t}} \cap (bX+i\overline{Y})=\emptyset.$ Next we apply the maximum modulus principle on the relatively compact domain  $G_{\tilde{s}}^{\alpha,\tilde{t}}:=\mathcal{V}_{\tilde{s}}^{\alpha,\tilde{t}}\cap (X+iY)$ of $\mathcal{V}_{\tilde{s}}^{\alpha,\tilde{t}}$ that contains $\tilde{z}$. Thus using Lemma \ref{key_max_lemma_var} and equation (\ref{h_s_and_bdX}) for any holomorphic polynomial $P:\C^n \to \C$, it implies 
	$$|P(\tilde{z})|\leq \max_{G_{\tilde{s}}^{\alpha,\tilde{t}}}|P|=\max_{bG_{\tilde{s}}^{\alpha,\tilde{t}}}|P|\leq \max_{\mathcal{V}_{\tilde{s}}^{\alpha,\tilde{t}}\cap (X+ibY)}|P|\leq \max_{{K_{\alpha, \partial}}} |P|.$$
	Which implies $\tilde{z}=\tilde{x}+i\tilde{y}\in \widehat{K_{\alpha,\partial}},$ and proves Claim \ref{claim_3}.
\end{claimproof}

\begin{claim}\label{claim_2}
$\widehat{K_{\alpha, \partial}}=\mathcal{A}_{\alpha}\cap\overline{D}.$
\end{claim}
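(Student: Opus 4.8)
The plan is to prove Claim \ref{claim_2}, namely the reverse inclusion $\widehat{K_{\alpha,\partial}} \subset \mathcal{A}_{\alpha}\cap\overline{D}$, since Claim \ref{claim_3} already establishes $\mathcal{A}_{\alpha}\cap\overline{D}\subset \widehat{K_{\alpha,\partial}}$. To this end I would first exhibit a holomorphic function (a polynomial approximant to $h$) that separates points outside $\mathcal{A}_{\alpha}$ from $K_{\alpha,\partial}$. The natural candidate is built from $h(z)=\exp(\sum_{j=1}^n z_j^2 - F_\alpha(z))$, whose modulus is $\exp(||x||^2-||y||^2-(\alpha(x)+f(x,y)))$. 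On $K_{\alpha,\partial}$ we have $||y||^2=R^2$ and $||x||^2\leq \alpha(x)+f(x,y)$, so $|h|\leq e^{-R^2}$ there. For a point $z_0=x_0+iy_0\in\overline{D}$ lying \emph{outside} $\mathcal{A}_\alpha$, by definition $||x_0||^2-||y_0||^2-(\alpha(x_0)+f(x_0,y_0))+R^2>0$, which says exactly that $|h(z_0)|>e^{-R^2}\geq \max_{K_{\alpha,\partial}}|h|$.

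The technical obstacle is that $h$ is holomorphic but need not be a polynomial, whereas the polynomial hull $\widehat{K_{\alpha,\partial}}$ is defined through polynomials only. Since $\alpha$ is a good $\mathcal{C}^\omega$ function, $F_\alpha:\C^n\to\C$ is entire, hence so is $h$; I would therefore invoke uniform approximation of entire functions by their Taylor polynomials on the compact set $K_{\alpha,\partial}\cup\{z_0\}$. More precisely, let $P_N$ denote the $N$-th partial sum of the Taylor series of $h$ about the origin. Because $h$ is entire and the set is compact, $P_N\to h$ uniformly on $K_{\alpha,\partial}\cup\{z_0\}$. Since $|h(z_0)| > \max_{K_{\alpha,\partial}}|h|$ is a strict inequality, for $N$ large enough we retain $|P_N(z_0)| > \max_{K_{\alpha,\partial}}|P_N|$. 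This polynomial $P_N$ witnesses that $z_0\notin\widehat{K_{\alpha,\partial}}$, giving the contrapositive of the desired inclusion: every point of $\overline{D}$ not in $\mathcal{A}_\alpha$ is excluded from the hull, hence $\widehat{K_{\alpha,\partial}}\subset \mathcal{A}_\alpha\cap\overline{D}$ (the containment in $\overline{D}$ is automatic once one notes $\widehat{K_{\alpha,\partial}}\subset \overline{D}$, which itself follows because $\overline{D}$ is convex, hence polynomially convex, and contains $K_{\alpha,\partial}$).

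I expect the main subtlety to lie not in the approximation step but in cleanly handling the boundary case $|h(z_0)| = e^{-R^2}$ and in verifying that the hull stays inside $\overline{D}$. For the first point, the strictness is guaranteed precisely by working with points \emph{outside} the closed set $\mathcal{A}_\alpha$, where the defining inequality is strict; points on $b\mathcal{A}_\alpha$ belong to $\mathcal{A}_\alpha$ and need no treatment. For the second, since $D=X+iY$ with $X$ and $Y$ both convex, $\overline{D}$ is a closed convex set in $\C^n\cong\R^{2n}$ and therefore polynomially convex, so $\widehat{K_{\alpha,\partial}}\subset\widehat{\overline{D}}=\overline{D}$. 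Combining the two inclusions from Claim \ref{claim_3} and the argument above yields $\widehat{K_{\alpha,\partial}}=\mathcal{A}_\alpha\cap\overline{D}$, completing the explicit description of the hull in Step I.
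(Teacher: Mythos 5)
Your proposal is correct and takes essentially the same route as the paper: both arguments use the entire function $h(z)=\exp\bigl(\sum_{j=1}^n z_j^2-F_\alpha(z)\bigr)$ together with the strict inequality $|h(z_0)|>e^{-R^2}\geq\max_{K_{\alpha,\partial}}|h|$ to exclude points of $\overline{D}\setminus\mathcal{A}_\alpha$ from the hull, with the other inclusion supplied by Claim \ref{claim_3}. The only difference is that you make explicit two routine points the paper leaves implicit, namely the uniform Taylor-polynomial approximation needed to pass from the entire function $h$ to actual polynomials, and the containment $\widehat{K_{\alpha,\partial}}\subset\overline{D}$ via polynomial convexity of the closed convex set $\overline{D}$.
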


\begin{claimproof}
For the other direction, we consider ${z'\in \overline{D}\setminus (\mathcal{A}_{\alpha} \cap \overline{D})}$, and the holomorphic function $h(z)=\text{exp}{\big(\sum_{j=1}^{n}z_j^2-F_{\alpha}(z)\big)}$ defined earlier. Notice that if any $\zeta=x+iy\in K_{\alpha,\partial}$, then $||x||^2-\big(\alpha(x)+f(x,y)\big)\leq 0$ and $||y||=R$. Due to $z'\notin \mathcal{A}_{\alpha}$, it implies $||x'||^2-||y'||^2-\big(\alpha(x')+f(x',y')\big)>-R^2$. Therefore,
\begin{align*}
	|h(z')| &= \text{exp}\big(||x'||^2-||y'||^2-\big(\alpha(x')+f(x',y')\big)\big)\\
	&>e^{-R^2}= \max_{\zeta\in K_{\alpha,\partial}}\text{exp}\big(||x'||^2-\big(\alpha(x')+f(x',y')\big)-||R||^2\big)\\
	&= \max_{\zeta\in K_{\alpha,\partial}}|h|.
\end{align*}
This shows $\mathcal{A}_{\alpha} \cap \overline{D}= \widehat{K_{\alpha,\partial}}$, and hence the Claim \ref{claim_2} is proved.
\end{claimproof}

The containment of $\widehat{K_{\alpha,\partial}}$ inside $K_{\alpha}$ follows from the  next claim.
\begin{claim}\label{claim_4}
$\widehat{K_{\alpha,\partial}}\subsetneq K_{\alpha}.$
\end{claim}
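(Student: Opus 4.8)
The plan is to read off the result from the identity $\widehat{K_{\alpha,\partial}}=\mathcal{A}_{\alpha}\cap\overline{D}$ that has just been established in Claim \ref{claim_2}, and then to compare the two inequalities defining $\mathcal{A}_{\alpha}\cap\overline{D}$ and $K_{\alpha}$. I will split the proof into the inclusion $\widehat{K_{\alpha,\partial}}\subseteq K_{\alpha}$ and its strictness. For the inclusion, take $z=x+iy\in\widehat{K_{\alpha,\partial}}=\mathcal{A}_{\alpha}\cap\overline{D}$. Then $y\in\overline{Y}$, so $\|y\|\le R$, while membership in $\mathcal{A}_{\alpha}$ reads
\[
\|x\|^2-\|y\|^2-\bigl(\alpha(x)+f(x,y)\bigr)+R^2\le 0,
\]
that is, $\|x\|^2-\bigl(\alpha(x)+f(x,y)\bigr)\le \|y\|^2-R^2\le 0$. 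In particular $\|x\|^2\le\alpha(x)+f(x,y)$ with $y\in\overline{Y}$, which is exactly the defining condition of $K_{\alpha}$ in (\ref{kalpha}); hence $z\in K_{\alpha}$ and $\widehat{K_{\alpha,\partial}}\subseteq K_{\alpha}$.

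For strictness it suffices to exhibit one point of $K_{\alpha}$ lying outside $\mathcal{A}_{\alpha}$. The plan is to fix some $y_0\in Y$ with $\|y_0\|<R$ for which the slice $K_{\alpha}^{y_0}=\{x\in\R^n:\|x\|^2\le\alpha(x)+f(x,y_0)\}$ is nonempty, and then to move to its relative boundary. Since $\alpha$ is a special barrier, the Hessian matrix in (\ref{matrix_cond}) is positive definite, so $x\mapsto\|x\|^2-\bigl(\alpha(x)+f(x,y_0)\bigr)$ is strictly convex and coercive; thus $K_{\alpha}^{y_0}$ is a nonempty compact convex set (cf. Remark \ref{remark_strcvx}) admitting a relative boundary point $x_0$ with $\|x_0\|^2=\alpha(x_0)+f(x_0,y_0)$ (the unique minimizer, if the slice is a singleton). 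Then $z_0=x_0+iy_0\in K_{\alpha}$, whereas
\[
\|x_0\|^2-\|y_0\|^2-\bigl(\alpha(x_0)+f(x_0,y_0)\bigr)+R^2=R^2-\|y_0\|^2>0,
\]
so $z_0\notin\mathcal{A}_{\alpha}$ and therefore $z_0\notin\widehat{K_{\alpha,\partial}}$. This yields $z_0\in K_{\alpha}\setminus\widehat{K_{\alpha,\partial}}$ and proves the strict inclusion.

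The hard part will be the very existence of such a $y_0$, i.e. the statement that the compact fence $K_{\alpha}$ is not entirely contained in its top face $X+ibY$ (equivalently $K_{\alpha}\neq K_{\alpha,\partial}$); everything above is then routine. I expect to handle it as follows. If $K_{\alpha}$ has nonempty interior in $\C^n$, then, since $K_{\alpha}\subseteq\R^n+i\overline{Y}=\{\|y\|\le R\}$, every interior point $x_1+iy_1$ automatically has $\|y_1\|<R$, and its slice $K_{\alpha}^{y_1}$ is the required interior-level slice. Otherwise $K_{\alpha}$ degenerates to the zero set of the strictly plurisubharmonic function $\|x\|^2-\textsf{Re}F_{\alpha}$, and here I would use that $f(x,0)=0$ (because $\textsf{Re}F_{\alpha}(x)=\alpha(x)$ on $\R^n$), so that the $y_0=0$ slice is $\{x:\|x\|^2\le\alpha(x)\}$; whenever this slice is nonempty we again obtain a point at level $\|y_0\|=0<R$. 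The only genuinely delicate possibility is that $K_{\alpha}$ is nonempty yet confined to $\{\|y\|=R\}$, which would force $K_{\alpha}=K_{\alpha,\partial}$ and hence $\mathcal{E}(S_{\alpha})=D\setminus K_{\alpha}=S_{\alpha}$, i.e. the already-pseudoconvex situation excluded by the hypotheses on the fence; ruling this case out by the genuineness of the compact fence produced by a special barrier is the one point I would argue with care.
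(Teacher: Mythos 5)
Your argument is essentially the paper's: the inclusion $\widehat{K_{\alpha,\partial}}\subseteq K_{\alpha}$ is obtained exactly as you do, by combining Claim \ref{claim_2} with $R^2-\|y\|^2\geq 0$ on $\overline{D}$, and your witness for strictness is the same point $z'=x'+iy'$ with $\|x'\|^2=\alpha(x')+f(x',y')$ and $y'\in B_R(0)$ that the paper exhibits parenthetically. The one issue you flag --- whether such a witness exists at all --- is not addressed in the paper either, and it is a real one: in the degenerate case $K_{\alpha}=\emptyset$ (which the hypotheses of Theorem \ref{1st_theo} do not exclude; cf.\ the barrier $\alpha_{-}$ in Section \ref{sec_4}, whose compact fence is empty) the strict inclusion fails outright, both sides being empty, so no argument can ``rule this case out.'' This does not affect Theorem \ref{1st_theo}, which only uses the non-strict inclusion; and in the non-degenerate case your reduction to a nonempty slice at some $\|y_0\|<R$, using $f(x,0)=0$ and the strict convexity of $x\mapsto\|x\|^2-\alpha(x)-f(x,y_0)$, is a correct and more careful way to produce the witness than the paper's one-line remark.
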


\begin{claimproof}
Choose any $x^*+iy^* \in \widehat{K_{\alpha,\partial}}$, and notice from Claim \ref{claim_2} that $||x^*||^2-||y^*||^2-\alpha(x^*)-f(x^*,y^*)+R^2\leq 0.$ Since on $\overline{D}$ the quantity $R^2-||y^*||^2\geq 0$ it must therefore imply $||x^*||^2-\alpha(x^*)-f(x^*,y^*)\leq 0.$ This guarantees that $x^*+iy^* \in K_{\alpha}.$ It is worth mentioning that the inclusion is proper can readily be seen from the construction (e.g., choosing a point $z'=x'+iy' \in K_{\alpha}$ satisfying $||x'||^2-\alpha(x')-f(x',y')=0$ and $y'\in B_R(0)$, will be sufficient to conclude ${z'\notin \widehat{K_{\alpha,\partial}}}$). This proves the claim.
\end{claimproof}

Having established the explicit description of $\widehat{K_{\alpha,\partial}}$, we now proceed to discuss the next step.

\noindent \textbf{Step II: Holomorphic extension up to $D\setminus \widehat{K_{\alpha,\partial}}$.}\\

 \vspace*{-.3cm} 

First we note that by convexity, $\widehat{K_{\alpha,\partial}}$ is contained in the compact convex set
\begin{equation}\label{ch_contained}
	\textsf{ch}(\{x\in \R^n: x+iy \in K_{\alpha,\partial} \text{ for some } y\in b{Y} \})+i\overline{Y},
\end{equation}
where $\textsf{ch}$ stands for the convex hull. By the assumption (\ref{cvx_initial}), it implies
\begin{equation}\label{eps_gurrenty}
	{\widehat{K_{\alpha,\partial}} \cap (b X+i\overline{Y}) =\emptyset.}
\end{equation} 
Denote $\Omega_{\alpha}:=D\setminus \widehat{K_{\alpha,\partial}}$. We observe that $D$ is a bounded 
pseudoconvex (infact a convex) domain in $\C^n$, but not of class $\mathcal{C}^2$. But in our situation, we don't need the $\mathcal{C}^2$-boundary (unlike Lemma \ref{cvx-lem}). To prove the holomorphic extension up to $D\setminus \widehat{K_{\alpha,\partial}}$, we use the following well-known theorem.

\begin{theorem}[\cite{st-81}, \cite{Lupa-86}, \cite{chir-st}; Theorem 5.2.11 of \cite{stout}]\label{key_theorem}
Consider $E$ as a compact, $\mathcal{O}(\mathcal{M})$-convex subset of the Stein manifold $\mathcal{M}$, and let $\Omega$ be a bounded domain in $\mathcal{M}$. Assume that both $\Omega\setminus E$ and $\mathcal{M}\setminus (\overline{\Omega}\cup E)$ are connected, and let $ \Gamma:= b\Omega \setminus E$ be a real hypersurface of class $\mathcal{C}^1$.
Then for every continuous CR function $f$ on $\Gamma$, there exists a function $F$ holomorphic on $\Omega \setminus E$ and continuous on $\Gamma \cup (\Omega \setminus E)$, such that $F|_{\Gamma}=f$.
\end{theorem}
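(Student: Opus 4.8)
\emph{Strategy.} This is a global analytic-continuation theorem of Bochner--Hartogs type in the presence of a compact obstruction $E$, and my plan is to realize $F$ as the solution of a $\overline\partial$-problem on $\mathcal M$ whose data is manufactured from $f$ and whose behaviour near $E$ is controlled by the hull hypothesis. An equivalent route, and the one actually taken in the cited works \cite{st-81,Lupa-86,chir-st}, produces $F$ directly by integrating $f$ against a Cauchy--Fantappi\`e kernel adapted to $E$; I will sketch the $\overline\partial$-skeleton and indicate where the kernel construction performs the same task. Two structural facts are used repeatedly: since $\mathcal M$ is Stein we have solvability of $\overline\partial$ and Oka--Weil approximation, and since $E$ is $\mathcal{O}(\mathcal M)$-convex it admits a fundamental system of holomorphically convex neighborhoods together with global holomorphic functions peaking off $E$.

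\emph{Construction of the $\overline\partial$-data.} First extend the continuous CR function $f$ to a $\mathcal C^1$ function $\tilde f$ on a neighborhood of $\overline\Gamma$ in $\mathcal M$ (a routine smoothing reduces the merely continuous case to the $\mathcal C^1$ case). Define a $(0,1)$-form $\omega$ on $\mathcal M\setminus E$ by $\omega=\overline\partial\tilde f$ on $\Omega$ near $\Gamma$ and $\omega=0$ on $\mathcal M\setminus\overline\Omega$. Viewing the current $\overline\partial(\chi_\Omega\tilde f)$, the boundary-layer term it produces along the $\mathcal C^1$ hypersurface $\Gamma$ vanishes \emph{precisely} when $f$ is CR; hence $\omega$ is a genuine continuous, $\overline\partial$-closed $(0,1)$-form on $\mathcal M\setminus E$ whose support lies in $\overline\Omega\setminus E$.

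\emph{Solving $\overline\partial$ with control off $E$.} Solve $\overline\partial u=\omega$ on $\mathcal M\setminus E$, choosing the solution furnished by the hull hypothesis: because $E$ is $\mathcal{O}(\mathcal M)$-convex in the Stein manifold $\mathcal M$, the problem is solvable and admits a solution $u$ that is holomorphic wherever $\omega=0$ and that vanishes at the ``end'' of the outer region $\mathcal M\setminus(\overline\Omega\cup E)$ lying away from $E$. Since $\mathcal M\setminus(\overline\Omega\cup E)$ is \emph{connected}, the identity theorem propagates this vanishing, so that $u\equiv0$ on the entire outer region. In the kernel formulation the same conclusion is read off directly: the Cauchy--Fantappi\`e kernel is modified near $E$ using the peaking functions provided by $\mathcal{O}(\mathcal M)$-convexity, so that $z\mapsto\int_\Gamma f\,K(z,\zeta)$ is holomorphic for every $z\notin E$ and vanishes identically on the outer component. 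Setting $F:=\tilde f-u$ on $\Omega\setminus E$ gives $\overline\partial F=\overline\partial\tilde f-\omega=0$ there, so $F$ is holomorphic on $\Omega\setminus E$ and continuous up to $\Gamma$.

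\emph{Boundary values and the main obstacle.} Since $u$ is continuous on $\mathcal M\setminus E$ and vanishes on the outer region, its trace on $\Gamma$ from the outside is $0$, whence $F|_\Gamma=\tilde f|_\Gamma-0=f$; connectedness of $\Omega\setminus E$ with the identity theorem makes the extension single-valued and independent of the auxiliary choices. I expect the genuine difficulty to be exactly the middle step---securing a solution of $\overline\partial u=\omega$ that vanishes on the connected outer component---which is the whole content of the theorem: without $\mathcal{O}(\mathcal M)$-convexity of $E$ the solution can fail to vanish (the obstruction ``traps'' the cohomology class), and without connectedness of $\mathcal M\setminus(\overline\Omega\cup E)$ the vanishing near the $E$-free end would not propagate across the region. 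Making this step rigorous is precisely what Theorem 5.2.11 of \cite{stout}, together with its sources \cite{st-81,Lupa-86,chir-st}, accomplishes via the kernel adapted to the hull of $E$.
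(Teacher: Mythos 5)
The paper does not prove this statement at all: it is imported verbatim from the literature (Theorem 5.2.11 of Stout's book \cite{stout}, going back to \cite{st-81}, \cite{Lupa-86}, \cite{chir-st}) and used as a black box in Step II of Theorem \ref{1st_theo}. So there is no internal proof to compare against, and your text must be judged as a standalone argument. As such it is an outline with a genuine gap at its center, and you say so yourself: the step ``solve $\overline\partial u=\omega$ on $\mathcal{M}\setminus E$ with a solution that vanishes on the outer region'' is asserted by appeal to the hull hypothesis and then, in your closing sentence, explicitly outsourced to the very theorem being proved. This is circular. Note moreover that the naive formulation of that step is false as stated: $\mathcal{M}\setminus E$ is in general not Stein and $H^{0,1}(\mathcal{M}\setminus E)$ need not vanish (take $\mathcal{M}=\C^2$ and $E=\{0\}$, which is $\mathcal{O}(\C^2)$-convex, yet $H^1(\C^2\setminus\{0\},\mathcal{O})$ is infinite-dimensional), so one cannot simply ``solve $\overline\partial$'' there and hope to arrange vanishing afterwards. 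What the cited proofs actually do is either (a) work with $\overline\partial$-cohomology with a prescribed support family (closed sets compact modulo $E$) and a Serre-type duality argument, where $\mathcal{O}(\mathcal{M})$-convexity of $E$ enters to kill precisely the relevant cohomology group, or (b) build a global Cauchy--Fantappi\`e kernel adapted to $E$ out of the functions furnished by the hull hypothesis and verify holomorphicity and the outer vanishing directly from the kernel. Either route is the substance of the theorem, and neither is sketched in enough detail in your proposal to constitute a proof.

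Two secondary inaccuracies should also be fixed if you develop this further. First, the boundary-layer current $\tilde f\,\overline\partial\chi_\Omega$ does \emph{not} vanish when $f$ is CR; what the CR condition gives is that this layer current is $\overline\partial$-closed, equivalently that the bulk form $\chi_\Omega\overline\partial\tilde f$ (extended by zero across $\Gamma$) is $\overline\partial$-closed in the sense of currents---your parenthetical ``vanishes precisely when $f$ is CR'' conflates these. Relatedly, $\omega$ as you define it is left undefined on the part of $\Omega$ away from $\Gamma$ where $\tilde f$ does not live; one needs a cutoff $\chi\equiv 1$ near $\Gamma$ supported where $\tilde f$ is defined, sets $\omega=\overline\partial(\chi\tilde f)$, and then $F=\chi\tilde f-u$. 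Second, the reduction of merely continuous CR data on a $\mathcal{C}^1$ hypersurface to the $\mathcal{C}^1$ case is not ``routine smoothing'': convolution does not preserve the CR condition on a hypersurface, and this is exactly why the original proofs run the kernel argument directly on continuous data via jump formulas rather than regularizing first. Finally, be aware that near the set where $\overline{\Gamma}$ meets $E$ the form $\omega$ need not extend $\overline\partial$-closed across $E$, which is why the support family in approach (a) is unavoidable and why your ``identity theorem propagates the vanishing'' argument, while fine once a suitable $u$ exists, cannot substitute for producing it.
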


For the reader's convenience, we set $\mathcal{M}=\C^n, E= \widehat{K_{\alpha,\partial}}$, and $\Omega=\Omega_{\alpha}$ in the preceding theorem. However, to avoid abuse of an notation, we will keep our original notation of $\Omega_{\alpha}$ throughout. Next, we list our immediate observations as follows.
\begin{itemize}	
\item[(i)] $ \Gamma= b\Omega_{\alpha} \setminus\widehat{K_{\alpha,\partial}}$ is a real hypersurface of class $\mathcal{C}^1$.
 
\item[(ii)] By applying \cite[Theorem 1.3.11]{stout}, we obtain  $\widehat{K_{\alpha,\partial}}=(\widehat{K_{\alpha,\partial}})_{\mathsf{psh}(\C^n)}.$ Next, due to the Stein property of $\C^n$, and using \cite[Proposition 2.7.7]{JP1}, it implies  $$\big(\widehat{K_{\alpha,\partial}}\big)_{\mathsf{psh}(\C^n)}=\big(\widehat{K_{\alpha,\partial}}\big)_{\mathcal{O}(\C^n)}.$$ Consequently, $$\widehat{E}_{\mathcal{O}(\C^n)}=\widehat{E}=\widehat{\widehat{K_{\alpha,\partial}}}=\widehat{K_{\alpha,\partial}}=E.$$
Therefore, $E$ is compact $\mathcal{O}(\C^n)$-convex. \\

\item[(iii)] We see that $\C^n\setminus (\overline{\Omega}_{\alpha}\cup E)= \C^n\setminus \overline{D}$. Due to that $\overline{D}$ is convex and that the complement of a convex set in $\C^n$ is connected, the set $\C^n\setminus \overline{D}$, and hence $\C^n\setminus (\overline{\Omega}_{\alpha}\cup E)$ is connected.
\end{itemize}
We are yet to prove the connectedness of $\Omega_{\alpha}\setminus E$ before we can apply Theorem \ref{key_theorem}. Therefore, we proceed with our next claim.
\begin{claim}\label{claim_ctd}
For $\Omega_{\alpha}$ and $E$ described above, the set $\Omega_{\alpha}\setminus E$ is connected.
\end{claim}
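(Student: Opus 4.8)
The plan is to reduce the claim to an elementary connectivity statement about a set fibred over the ball $Y=B_R(0)$. By Claim~\ref{claim_2} we have $E=\widehat{K_{\alpha,\partial}}=\mathcal{A}_\alpha\cap\overline{D}$, so, writing $z=x+iy$ and putting
$$g_y(x):=\|x\|^2-\big(\alpha(x)+f(x,y)\big),$$
the definition of $\mathcal{A}_\alpha$ gives, since $D\cap\mathcal{A}_\alpha=D\cap E$,
$$\Omega_\alpha\setminus E=D\setminus E=D\setminus\mathcal{A}_\alpha=\big\{\,x+iy\in D:\ g_y(x)>\|y\|^2-R^2\,\big\}.$$
I would then consider the coordinate projection $\pi:D\setminus E\to Y$, $\pi(x+iy)=y$, which is continuous, surjective onto its image, and \emph{open} (it is the restriction to the open set $D\setminus E$ of the open map $\C^n\to\R^n$, $x+iy\mapsto y$). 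The whole argument then rests on understanding the fibres of $\pi$.

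For fixed $y\in Y$ the fibre is $\pi^{-1}(y)\cong\{x\in X:\ g_y(x)>\|y\|^2-R^2\}=X\setminus C_y$, where $C_y:=\{x\in X:\ g_y(x)\le\|y\|^2-R^2\}$. Two observations control $C_y$. First, because $\alpha$ is a special barrier with respect to $Y$, the Hessian in $x$ of $g_y$ is positive definite for every $y\in Y$; this is exactly the matrix condition \eqref{matrix_cond}, so $g_y$ is strictly convex and its sublevel set $C_y$ is convex. Second, the assumption \eqref{eps_gurrenty}, namely $\widehat{K_{\alpha,\partial}}\cap(bX+i\overline{Y})=\emptyset$, forces $\overline{C_y}\subset X$: for $x\in bX$ and $y\in\overline{Y}$ one has $(x,y)\notin E$, that is $g_y(x)>\|y\|^2-R^2$, so $bX\cap C_y=\emptyset$ and, $g_y$ being continuous, $\overline{C_y}$ is a compact subset of the open set $X$. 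In particular $C_y$ is a proper subset, so $X\setminus C_y\neq\emptyset$ and hence $\pi$ is onto $Y$.

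With this in hand I would invoke two elementary lemmas. \emph{(a) Topological lemma:} for $n\ge 2$, if $X\subseteq\R^n$ is open and convex and $C\Subset X$ is compact and convex, then $X\setminus C$ is path-connected. Indeed, choose $\epsilon>0$ so that the closed $\epsilon$-neighbourhood $C^\epsilon$ of $C$ satisfies $C^\epsilon\subset X$; then $C^\epsilon$ is compact convex with $\partial C^\epsilon\cong S^{n-1}\subset X\setminus C$. For $p,q\in X\setminus C$ the segment $[p,q]$ lies in $X$ by convexity; if it meets $C$ it meets $C^\epsilon$ in a subsegment $[a',b']$ with $a',b'\in\partial C^\epsilon$, and replacing $[a',b']$ by any arc of $\partial C^\epsilon$ (path-connected because $n\ge2$) produces a path from $p$ to $q$ inside $X\setminus C$. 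Applied to $C=C_y$, this shows every fibre of $\pi$ is connected. \emph{(b) Fibration lemma:} a continuous open surjection with connected base and connected fibres has connected total space; for if the total space were $U\sqcup V$ with $U,V$ open, nonempty, and disjoint, then $\pi(U),\pi(V)$ would be open sets covering the connected $Y$, hence meeting at some $y_0$, so $\pi^{-1}(y_0)$ would meet both $U$ and $V$, contradicting connectedness of the fibre. Taking base $Y=B_R(0)$ (connected) and fibres $X\setminus C_y$ (connected, nonempty) yields the connectedness of $\Omega_\alpha\setminus E=D\setminus E$.

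I expect the main obstacle to be lemma \emph{(a)} together with the verification that the "hole" $C_y$ stays compactly inside $X$: the convexity of $C_y$ is immediate from the special-barrier condition, and its compact containment is precisely where the geometric hypothesis \eqref{eps_gurrenty} (equivalently \eqref{cvx_initial}) is used, so one must be careful that the excluded convex region never reaches $bX$. Once that is secured, the rerouting argument in dimension $n\ge2$ and the fibration lemma are purely formal, and no further use of the pluriharmonicity of $\log|h|$ is needed for this claim.
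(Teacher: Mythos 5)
Your argument is correct, and it reaches the conclusion by a genuinely different topological route than the paper, while resting on the same two geometric facts. The paper proves connectedness by an explicit four-arc concatenation: it first shrinks $X$ to a deformed convex domain $X_{\epsilon}$ still satisfying $\widehat{K_{\alpha,\partial}}\cap(bX_{\epsilon}+i\overline{Y})=\emptyset$, joins $x_1+iy_1$ to $x_2+iy_1$ inside the slice $X_{\epsilon}+i\{y_1\}$ (using exactly your observation that each slice $S_{y_0}$ of the hull is convex because the Hessian \eqref{matrix_cond} is positive definite), and then detours through the connected ``vertical boundary'' $bX_{\epsilon}+iY$ to move from $y_1$ to $y_2$. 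You replace the detour and the deformation by the open-map/connected-fibre lemma applied to the projection $x+iy\mapsto y$ onto $Y$: the geometric input is identical (convexity of each $C_y$ from the special-barrier condition, and compact containment $\overline{C_y}\subset X$ from \eqref{eps_gurrenty}), but the argument treats all fibres uniformly and is arguably cleaner; what the paper's route buys instead is an explicit path, which is reused pictorially in Figure 1. Two small points to tighten in your write-up, neither of which is a genuine gap: in lemma (a) the endpoints $a',b'$ of $[p,q]\cap C^{\epsilon}$ lie on $\partial C^{\epsilon}$ only when $p,q\notin C^{\epsilon}$, so you should additionally take $\epsilon$ smaller than the distance from $\{p,q\}$ to $C$ (or first move $p,q$ out of $C^{\epsilon}$ within $X\setminus C$); and the implication ``$x+iy\notin E\Rightarrow g_y(x)>\|y\|^2-R^2$'' for $x\in bX$, $y\in\overline{Y}$ uses that such a point lies in $\overline{D}$, since $E=\mathcal{A}_{\alpha}\cap\overline{D}$ rather than $\mathcal{A}_{\alpha}$ itself --- this holds but deserves a word.
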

\begin{claimproof}
Since by construction $\Omega_{\alpha}\setminus E=\Omega_{\alpha}$, our claim therefore requires us to prove the connectedness of $\Omega_{\alpha}$. Choose any two distinct points $x_1+iy_1$ and $x_2+iy_2$ in $\Omega_{\alpha}$. As by construction $\widehat{K_{\alpha,\partial}}$ is contained in a compact convex set as in equation (\ref{ch_contained}), it follows from equation (\ref{eps_gurrenty}) that there exists a slightly deformed convex domain $X_{\epsilon}\subset X\subset \R^n$ (see Figure \ref{Salpha}) containing both $x_1$ and $x_2$. Moreover, $X_{\eps}$ can be chosen so that $( X_{\epsilon}+i{Y})\setminus\widehat{K_{\alpha,\partial}} \subset \Omega_{\alpha}$ and 
\begin{equation}\label{deformed_k_cond}
\widehat{K_{\alpha,\partial}} \cap (bX_{\epsilon}+i\overline{Y})=\emptyset.
\end{equation}
Denote $D_{\eps}:=X_{\epsilon}+i{Y}$, and  observe the following.
\begin{itemize}
\item[(i)] For all $y_0\in Y$, the ``{slice}" defined by $S_{y_0}:=\widehat{K_{\alpha,\partial}}\cap (\R^n+i\{y_0\})$ is convex. To verify this, set ${\delta(x,y)=||x||^2-||y||^2-(\alpha(x)+f(x,y))+R^2}$ for all $x,y \in \R^n$,
and note that $$S_{y_0}=\{x\in \R^n: \delta(x,y_0)\leq0\}\cap \overline{D}.$$
Since $\alpha $ is a special barrier, the Hessian matrix $\nabla^2\delta(\textbf{x},y_0)$ of $\delta(x,y_0)$ given by the matrix mentioned in equation (\ref{matrix_cond}), is positive definite for all $\textbf{x} \in X$ (hence for all $\textbf{x}\in X_{\eps}).$  This proves that $\delta(.,y_0)$ is a convex function, and hence its sublevel set $S_{y_0}$ is convex.\\
	
\item[(ii)]  By the previous argument, both the sets $\C^n\setminus S_{y_1}$ and $\C^n\setminus S_{y_2}$ are connected, as both are complements of the convex sets in $\C^n$. Due to equation (\ref{deformed_k_cond}), it implies $S_{y_j}\cap (bX_{\epsilon}+i\overline{Y})=\emptyset$ for all $j=1,2.$  Therefore, there is a path $\gamma_{1}\subset (X_{\eps}+i\{y_1\})\cap \Omega_{\alpha}$ that joins $x_1+iy_1$ and $x_2+iy_1$. \\

\item[(iii)] Since $bX_{\eps}+iY\subset \Omega_{\alpha}$, there exists a continuous curve $\gamma_{2}:[0,1]\to \C^n$ with $\gamma_{2}(\left[0,1\right))\subset ({X_{\eps}}+i\{y_1\})\cap \Omega_{\alpha}$, that joins $x_2+iy_1$ to some point $\gamma_{2}(1)=\zeta_1\in (bX_{\epsilon}+iY)\cap {\Omega_{\alpha}}$. Similarly, there exists a continuous curve $\gamma_{3}:[0,1]\to \C^n$ with $\gamma_{3}(\left[0,1\right))\subset ({X_{\eps}}+i\{y_2\})\cap \Omega_{\alpha}$, joining $x_2+iy_2$ to some point $\gamma_{3}(1)=\zeta_2\in (bX_{\epsilon}+iY)\cap {\Omega_{\alpha}}$.\\

\item [(iv)] Due to equation (\ref{deformed_k_cond}), and that the boundary $bX_{\eps}$ of the convex set $X_{\eps}$ is connected, it implies that the ``{vertical boundary}" $bX_{\epsilon}+iY$ of $D_{\epsilon}$ is connected. Therefore there exists a continuous curve $\gamma_{4}\subset bX_{\epsilon}+iY \subset \Omega_{\alpha}$ joining $\zeta_1$ and $\zeta_2.$

\begin{figure}[!htb]
	\centering
	\vspace*{-5.1cm}
	\def\svgwidth{\linewidth}
	\fontsize{12pt}{1em}
	\scalebox{.9}{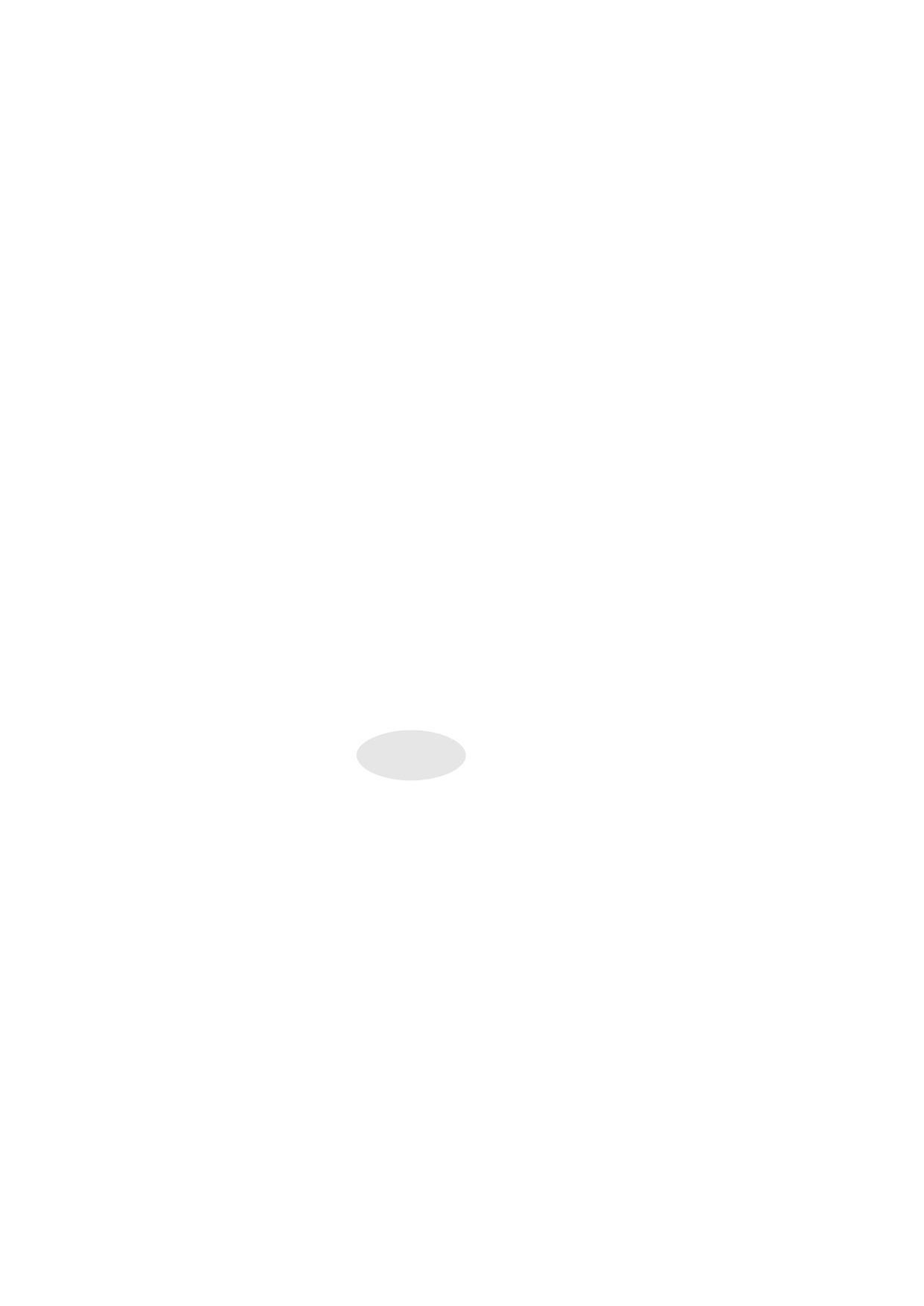}
	\vspace*{-6.0cm}
	\caption{Geometry of the domain $S_{\alpha}$ in $\C^2$}
	\label{Salpha}
\end{figure}
\end{itemize}

Finally, we define the continuous path $\gamma:[0,1]\to \Omega_{\alpha}$ given by the following concatenation $$\gamma:=\gamma_{1}*\gamma_{2}*\gamma_{4}*\gamma^{-1}_{3},$$
which connects $x_1+iy_1$ to $x_2+iy_2$. This completes the proof of Claim \ref{claim_ctd}.
\end{claimproof}\\

Hence by Theorem \ref{key_theorem},  every continuous CR function on $\Gamma =b\Omega_{\alpha}\setminus \widehat{K_{\alpha,\partial}}$ have holomorphic extension up to $\Omega_{\alpha}=D\setminus \widehat{K_{\alpha,\partial}}.$ This completes the proof of Step II.\\

\noindent \textbf{Step III: Pseudoconvexity of $D\setminus \widehat{K_{\alpha,\partial}}$ and the final envelope.}\\

\vspace*{-.3cm} 
 
From step II, recall that \begin{align*}
	\Omega_{\alpha}&:=D\setminus \widehat{K_{\alpha,\partial}}=D \cap \{\delta(x,y)>0\},
\end{align*} 
where as earlier, $\delta(x,y):=||x||^2-||y||^2-(\alpha(x)+f(x,y))+R^2$ for $x,y\in \R^n$. Due to Sard's theorem \cite[Theorem 6.1]{milnor}, the set of all
critical values of the function $\delta$ is of measure zero in $\R$. Therefore, there exists a decreasing sequence $\{r_n\}_{n\in \N}$ of real numbers with $r_n\downarrow 0$ as $n\uparrow \infty$, such that the sets ${\Omega_{\alpha,r_n}:=D\cap \{\delta(x,y)>r_n\}}$ has no critical points at the boundary $b\Omega_{\alpha,r_n}$. Clearly, $\Omega_{\alpha,r_n}$ is an intersection of a domain $\{\delta(x,y)>r_n\}$ with Levi-flat boundary and a pseudoconvex domain $D$. By \cite[Corollary 3.4.7]{kra}, this implies $\Omega_{\alpha,r_n}$ is pseudoconvex. Due to that $\{r_n\}_{n\in \N}$ is decreasing, it implies $\Omega_{\alpha,r_n}\subset \Omega_{\alpha,r_{n+1}}$ for all $r_n\in \R.$ Finally, applying {Behnke-Stein} theorem (see \cite{beh-ste}) on the increasing union of pseudoconvex domains $\Omega_{\alpha,r_n}$, we conclude that $\Omega_{\alpha}=\cup_{\{r_n:n\in \N\}}\Omega_{\alpha,r_n}$ is pseudoconvex. This completes the proof of the theorem.
\end{proof}

From Step II of the above theorem, one can observe that the proof offers more than what is stated in the Theorem \ref{1st_theo}. This means, it yields a holomorphic extension starting with the CR data prescribed on the boundary $\Gamma$ mentioned above. To avoid repetition of the proof, at this point, we therefore formulate the observation as a seperate result.

\begin{theorem}
	Let $\alpha, f, X,Y, S_{\alpha}$ and $K_{\alpha}$ be as in Theorem \ref{1st_theo}. Then every continuous CR function defined on  $\Gamma'=bS_{\alpha}\setminus K_{\alpha}$ has holomorphic extension up to $D\backslash\widehat{K_{\alpha, \partial}},$ where  $\widehat{K_{\alpha,\partial}}$ denotes the polynomial hull of ${K_{\alpha,\partial}}:=K_{\alpha}\cap (X+ibY).$ 
\end{theorem}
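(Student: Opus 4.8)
The plan is to show that the boundary piece $\Gamma'=bS_{\alpha}\setminus K_{\alpha}$ on which the CR data is prescribed coincides, as a set, with the hypersurface $\Gamma=b\Omega_{\alpha}\setminus\widehat{K_{\alpha,\partial}}$ that already appears in Step II of the proof of Theorem \ref{1st_theo}; once this identification is made, the conclusion is immediate from that step and no new analytic work is needed. Since $S_{\alpha}=D\setminus K_{\alpha}$ and $\Omega_{\alpha}=D\setminus\widehat{K_{\alpha,\partial}}$ with $\widehat{K_{\alpha,\partial}}\subsetneq K_{\alpha}$ (Claim \ref{claim_4}), both boundaries are obtained from $bD$ by deleting the trace of a compact set: the fence part of $bS_{\alpha}$ lying in the closed set $K_{\alpha}$ (respectively the part of $b\widehat{K_{\alpha,\partial}}$ lying in $\widehat{K_{\alpha,\partial}}$) is removed when we subtract $K_{\alpha}$ (respectively $\widehat{K_{\alpha,\partial}}$). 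Hence it suffices to prove $\Gamma'=\Gamma=bD\setminus K_{\alpha,\partial}$.

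First I would compute the trace of $K_{\alpha}$ on $bD$. Writing $bD=(bX+i\overline{Y})\cup(\overline{X}+ibY)$, the convex-initial-domain condition (\ref{cvx_initial}) gives $K_{\alpha}\cap(bX+i\overline{Y})=\emptyset$, so $K_{\alpha}$ meets $bD$ only along the horizontal part, and therefore $K_{\alpha}\cap bD=K_{\alpha}\cap(X+ibY)=K_{\alpha,\partial}$. Consequently $\Gamma'=bS_{\alpha}\setminus K_{\alpha}=bD\setminus K_{\alpha,\partial}$.

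Next I would compute the trace of $\widehat{K_{\alpha,\partial}}$ on $bD$. By Claim \ref{claim_2} we have the explicit description $\widehat{K_{\alpha,\partial}}=\mathcal{A}_{\alpha}\cap\overline{D}$, and by (\ref{eps_gurrenty}) one has $\widehat{K_{\alpha,\partial}}\cap(bX+i\overline{Y})=\emptyset$; thus $\widehat{K_{\alpha,\partial}}$ also meets $bD$ only along the horizontal part $X+ibY$. There $||y||=R$, so the defining inequality $||x||^2-||y||^2-(\alpha(x)+f(x,y))+R^2\leq 0$ of $\mathcal{A}_{\alpha}$ collapses to $||x||^2\leq\alpha(x)+f(x,y)$, which is exactly the defining inequality of $K_{\alpha,\partial}$. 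Hence $\widehat{K_{\alpha,\partial}}\cap bD=K_{\alpha,\partial}$, and therefore $\Gamma=b\Omega_{\alpha}\setminus\widehat{K_{\alpha,\partial}}=bD\setminus K_{\alpha,\partial}=\Gamma'$.

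Finally, with $\Gamma'=\Gamma$ identified as the same $\mathcal{C}^1$ real hypersurface (this regularity is observation (i) of Step II), the extension follows verbatim from Step II: the data $\mathcal{M}=\C^n$, $E=\widehat{K_{\alpha,\partial}}$, $\Omega=\Omega_{\alpha}$ were already shown there to satisfy the hypotheses of Theorem \ref{key_theorem} (namely $E$ is compact and $\mathcal{O}(\C^n)$-convex, and both $\Omega_{\alpha}\setminus E$ and $\C^n\setminus(\overline{\Omega}_{\alpha}\cup E)$ are connected), so every continuous CR function on $\Gamma'=\Gamma$ extends to a function holomorphic on $\Omega_{\alpha}=D\setminus\widehat{K_{\alpha,\partial}}$. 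The only substantive point, and thus the main (if modest) obstacle, is the set-theoretic identification $\Gamma'=\Gamma$ --- in particular the observation that $K_{\alpha}$ and its trimmed polynomial hull $\widehat{K_{\alpha,\partial}}$ leave the \emph{same} trace $K_{\alpha,\partial}$ on $bD$; beyond this there is no analytic content not already supplied by Step II.
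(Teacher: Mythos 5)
Your proof is correct and follows the paper's own route: the paper obtains this theorem directly from Step~II of the proof of Theorem~\ref{1st_theo} (the application of Theorem~\ref{key_theorem} with $E=\widehat{K_{\alpha,\partial}}$ and $\Omega=\Omega_{\alpha}$), exactly as you do. Your only addition is the explicit verification that $\Gamma'=bS_{\alpha}\setminus K_{\alpha}$ and $\Gamma=b\Omega_{\alpha}\setminus\widehat{K_{\alpha,\partial}}$ both equal $bD\setminus K_{\alpha,\partial}$ --- a set-theoretic identification the paper leaves implicit --- and your computation of the two traces on $bD$ via (\ref{cvx_initial}), (\ref{eps_gurrenty}) and Claim~\ref{claim_2} is accurate.
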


As mentioned earlier, our theorem is a sharp generalization of the following well-known result.	
\begin{corollary}[Jarnicki/Pflug, \cite{JP2}]
The envelope of holomorphy of the domain $\{x \in \R^n: r_1<||x||<r_2\}+i\{y\in \R^n:||y||<r_3\}$ is given by $$\{x+iy\in \C^n:||y||^2<||x||^2-(r_1^2-r_3^2)\}.$$ In particular, the envelope is schlicht.
\end{corollary}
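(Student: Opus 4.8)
The plan is to realize the Jarnicki--Pflug truncated tube as a special domain $S_\alpha$ and then read off its envelope directly from Theorem \ref{1st_theo}. First I would set $Y=B_{r_3}(0)$, so that the radius in Theorem \ref{1st_theo} is $R=r_3$, and take the constant special barrier $\alpha(x):=r_1^2$. This is a legitimate special barrier with respect to any convex set by Example \ref{ex_special_bar}(i), and its special augmenting function vanishes, $f\equiv 0$. For the convex initial domain I would choose $X=B_{r_2}(0)$, which is bounded, strictly convex, and has $\mathcal{C}^2$ boundary. Since $f\equiv 0$, the compact fence is $K_\alpha=\{x\in\R^n:\|x\|\le r_1\}+i\,\overline{B_{r_3}(0)}$, and because $r_1<r_2$ the disjointness requirement (\ref{cvx_initial}), namely $K_\alpha\cap(bX+i\overline{Y})=\emptyset$, holds. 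With these choices $D=X+iY=B_{r_2}(0)+iB_{r_3}(0)$, and
$$S_\alpha=D\setminus K_\alpha=\{x\in\R^n:r_1<\|x\|<r_2\}+i\{y\in\R^n:\|y\|<r_3\},$$
which is exactly the truncated tube in the statement.

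Next I would invoke Theorem \ref{1st_theo} to conclude $\mathcal{E}(S_\alpha)=D\setminus\widehat{K_{\alpha,\partial}}$, where $K_{\alpha,\partial}=K_\alpha\cap(X+ibY)$, so that the only remaining task is to identify $\widehat{K_{\alpha,\partial}}$ explicitly. With the present data one computes $K_{\alpha,\partial}=\{x:\|x\|\le r_1\}+i\{y:\|y\|=r_3\}$, which is precisely the compact set $K=K_x+ibY$ appearing in Proposition \ref{both_hull_same}. Hence I would simply quote that proposition to obtain
$$\widehat{K_{\alpha,\partial}}=\{x+iy\in\C^n:\|y\|\le r_3,\ \|x\|^2-\|y\|^2\le r_1^2-r_3^2\}.$$
Alternatively, the same formula follows by specializing Claim \ref{claim_2} (that $\widehat{K_{\alpha,\partial}}=\mathcal{A}_\alpha\cap\overline{D}$) to $\alpha\equiv r_1^2$, $f\equiv 0$, $R=r_3$.

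Finally, subtracting this hull from $D$ and using that every point of $D$ already satisfies $\|y\|<r_3$ gives
$$\mathcal{E}(S_\alpha)=\{x+iy\in\C^n:\|x\|<r_2,\ \|y\|<r_3,\ \|y\|^2<\|x\|^2-(r_1^2-r_3^2)\},$$
which recovers the envelope of Theorem \ref{JP2theorem}; schlichtness is immediate since the right-hand side is an ordinary domain in $\C^n$. Since all geometric hypotheses reduce to elementary inclusions, I do not expect a genuine obstacle here: the only points needing care are confirming that the constant $r_1^2$ qualifies as a special barrier, so that Theorem \ref{1st_theo} applies verbatim, and matching $K_{\alpha,\partial}$ with the compact set treated in Proposition \ref{both_hull_same}. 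Everything else is a direct substitution into the already-established description of the envelope.
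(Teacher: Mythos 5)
Your proposal is correct and follows essentially the same route as the paper, whose entire proof of this corollary is the one-line substitution $\alpha(x)=r_1^2$, $X=B_{r_2}(0)$, $R=r_3$ into Theorem \ref{1st_theo}; you have simply made explicit the verifications (special barrier, vanishing augmenting function, the disjointness condition (\ref{cvx_initial}), and the identification of $\widehat{K_{\alpha,\partial}}$ via Claim \ref{claim_2} or Proposition \ref{both_hull_same}) that the paper leaves implicit.
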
	
\begin{proof}
Put $\alpha(x)=r_1^2$, $X=\{x\in \R^n: ||x||<r_2^2\}$ and $R=r_3$ in Theorem \ref{1st_theo}.
\end{proof}  
	\begin{corollary}
	Let $r_3,\eps \in \R^+,\delta_j\in \R$, and $C:=\{\sum_{j=1}^{n}(x_j-\frac{\delta_j}{2})^2= \frac{1}{4}{\sum_{j=1}^{n}\delta_j^2}\}$. Assume $r_2>(l+\eps)$, where $l=\max_{x\in C}||x||$.
	Then the envelope of holomorphy of the truncated tube domain $D_0:=\{x \in \R^n: \sum_{j=1}^{n}\delta_jx_j<||x||^2<r_2^2\}+i\{||y||<r_3\}$ is given by $$ \Big\{x+iy\in B_{r_2}(0)+iB_{r_3}(0): ||x||^2-||y||^2>\sum\limits_{j=1}^{n}\delta_jx_j-r_3^2\Big\}.$$ In particular, the envelope is schlicht.
\end{corollary}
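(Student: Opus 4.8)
The plan is to realize $D_0$ as a special domain $S_{\alpha}$ for a suitable \emph{linear} special barrier and then read off its envelope directly from Theorem \ref{1st_theo}. Take $\alpha(x):=\sum_{j=1}^{n}\delta_j x_j$, which by Example \ref{ex_special_bar}(ii) is a special barrier with respect to any convex set in $\R^n$ and whose special augmenting function $f$ is identically zero. Set $Y=B_{r_3}(0)$ (so $R=r_3$) and $X=B_{r_2}(0)$. With these choices the compact fence of Definition \ref{compact-fence} becomes
$$K_{\alpha}=\{x+iy\in\R^n+i\overline{Y}:||x||^2\leq \textstyle\sum_{j=1}^{n}\delta_j x_j\},$$
and completing the square, $||x||^2-\sum_{j}\delta_j x_j=\sum_{j}(x_j-\tfrac{\delta_j}{2})^2-\tfrac{1}{4}\sum_j\delta_j^2$, so the $x$-projection of $K_{\alpha}$ is exactly the closed ball bounded by the sphere $C$. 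Since points of $X+iY$ automatically satisfy $||y||<r_3$, removing $K_{\alpha}$ only imposes $||x||^2>\sum_j\delta_j x_j$, and one checks immediately that $S_{\alpha}=(X+iY)\setminus K_{\alpha}$ coincides with $D_0$.

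Next I would verify the two hypotheses of Theorem \ref{1st_theo}. The barrier condition is already granted by Example \ref{ex_special_bar}(ii). The remaining point is that $X=B_{r_2}(0)$ is a \emph{convex initial domain} in the sense of Definition \ref{convex-initial-domain}, that is, the disjointness (\ref{cvx_initial}), namely $K_{\alpha}\cap(bX+i\overline{Y})=\emptyset$. With both hypotheses in place, Theorem \ref{1st_theo} yields $\mathcal{E}(S_{\alpha})=D\setminus\widehat{K_{\alpha,\partial}}$, where $D=X+iY$ and $K_{\alpha,\partial}=K_{\alpha}\cap(X+ibY)$, and in particular the envelope is schlicht.

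It then remains only to compute $\widehat{K_{\alpha,\partial}}$ explicitly, for which I would invoke Claim \ref{claim_2} from the proof of Theorem \ref{1st_theo}: $\widehat{K_{\alpha,\partial}}=\mathcal{A}_{\alpha}\cap\overline{D}$, where for the present data
$$\mathcal{A}_{\alpha}=\{x+iy\in\C^n:||x||^2-||y||^2-\textstyle\sum_{j=1}^{n}\delta_j x_j+r_3^2\leq 0\}.$$
Intersecting with $\overline{D}=\overline{B_{r_2}(0)}+i\overline{B_{r_3}(0)}$ and then deleting this set from $D$ turns the defining inequality into its strict opposite, so a point $x+iy$ of $X+iY$ survives in the envelope precisely when $||x||^2-||y||^2>\sum_j\delta_j x_j-r_3^2$. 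This is exactly the asserted description of $\mathcal{E}(D_0)$, completing the proof.

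The only genuinely non-formal step, and the one I expect to be the main obstacle, is the disjointness check of the previous paragraph; everything else is a substitution into Theorem \ref{1st_theo} and Claim \ref{claim_2}. This is precisely where the geometric hypothesis $r_2>l+\eps$ is used: every point of the $x$-projection of $K_{\alpha}$ lies in the ball bounded by $C$ and hence has norm at most $l=\max_{x\in C}||x||$, whereas every point of $bX$ has norm $r_2>l$, so the two sets cannot meet; the margin $\eps$ merely ensures a strict gap. Thus the fence $K_{\alpha}$ never reaches the outer boundary sphere $||x||=r_2$, which is the one condition in the definition of a convex initial domain that is not automatic here, and I expect no further difficulty.
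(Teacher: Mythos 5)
Your proposal is correct and takes essentially the same route as the paper: realize $D_0$ as the special domain for the linear barrier $\alpha(x)=\sum_j\delta_jx_j$ with zero augmenting function, use the completed square and the hypothesis $r_2>l+\eps$ to verify the convex-initial-domain condition $K_{\alpha}\cap(bX+i\overline{Y})=\emptyset$, and then apply Theorem \ref{1st_theo}. Your additional explicit identification of $\widehat{K_{\alpha,\partial}}$ via Claim \ref{claim_2} just fills in a detail the paper leaves implicit.
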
	
\begin{proof}
	Put $\alpha(x)=\sum_{j=1}^{n}\delta_jx_j$, $X=\{x\in \R^n: ||x||<r_2^2\}$ and $Y=B_{r_3}(0)$ in Theorem \ref{1st_theo}. Clearly $\alpha$ is a special barrier with respect to $Y$ and with zero augmenting function (Example \ref{ex_special_bar}). Thus  ${K_{\alpha} = \{x: ||x||^2\leq \sum_{j=1}^{n}\delta_jx_j\}+i\overline{Y}}$. Denote $K_{\alpha,x}=\{x\in \R^n: x+iy \in K_{\alpha} \text{ for some } y\in \overline{Y}\}.$ Observe that $K_{\alpha,x}$ is the non-shaded region drawn in Figure \ref{ex_circles}, that contains all points bounded by the $(n-1)$-sphere of radius $\sqrt{\frac{1}{4}{\sum_{j=1}^{n}\delta_j^2}}$ and centred at $(\frac{\delta_1}{2},\ldots, \frac{\delta_n}{2})$.	
	\begin{figure}[!htb]
		\centering
		\vspace*{-3.8cm}
		\def\svgwidth{\linewidth}
		\fontsize{11pt}{1em}
		\scalebox{.75}{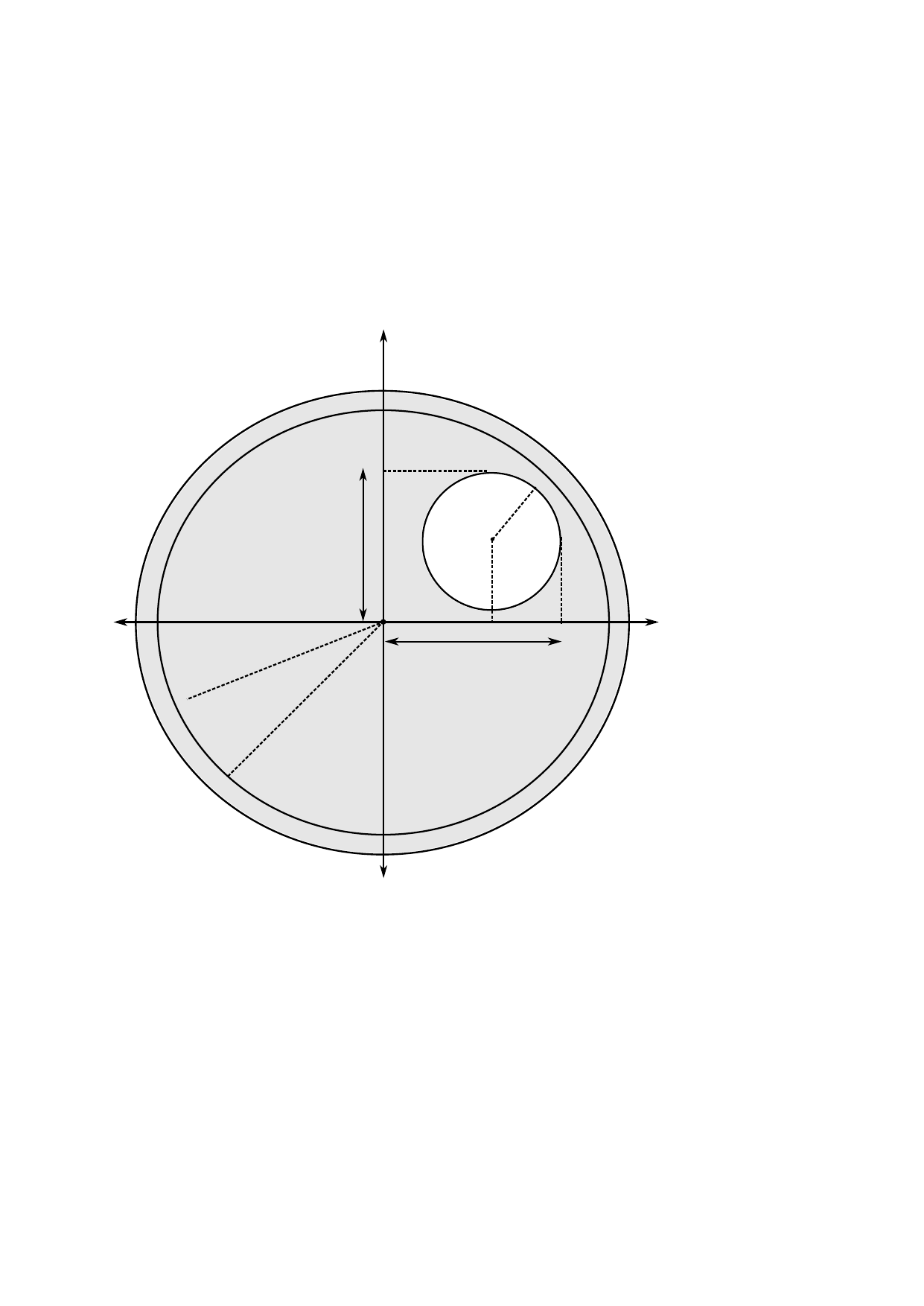}
		\vspace*{-5.4cm}
		\caption{Geometry of the domain $\textsf{Re}(S_{\alpha})$ in $\R^2$}
		\label{ex_circles}
	\end{figure}	
	Observe that the circle $C_{l+\eps/2}$ centred at origin and of radius $(l+\eps/2)$ doesn't intersect $C$. Thus with $r_2>(l+\eps)$ we obtain $ {K_{\alpha} \cap \big(b B_{r_2}(0)+iB_{r_3}(0)\big) =\emptyset}$ (equation (\ref{cvx_initial})), and hence $X$ becomes a convex initial domain. Therefore by Theorem \ref{1st_theo} the envelope of holomorphy of the special domain $(X+iY)\setminus K_{\alpha}=D_0$
	provides the desired result. In particular, the envelope is schlicht.
\end{proof}
	
Before ending this section we note that the final envelope in Theorem \ref{1st_theo} is obtained by deletion of the polynomially convex set $\widehat{K_{\alpha,\partial}}$ from $D.$ In general, a polynomially convex set can possess wild singular structure. However in our situation, with an assumption of $f$ being independent of mixed $xy$-term (as defined earlier in Proposition \ref{lem_crit}), the set $\widehat{K_{\alpha,\partial}}$ can be written as a disjoint union of $\mathcal{C}^{\infty}$ real hypersurface with exceptions at the isolated singularities.

\begin{proposition}\label{lem_crit2}
	Let $\alpha, f, X, Y, \mathcal{H}_{s}^{\alpha},K_{\alpha, \partial}, D $ and $F_{\alpha}$ as in Theorem \ref{1st_theo}. Then it implies 
	\begin{equation}\label{s-R2}
	\widehat{K_{\alpha, \partial}}=\mathcal{A}_{\alpha}\cap \overline{D}=\Big(\cup_{s\leq -R^2}\mathcal{H}_{s}^{\alpha}\Big)\cap \overline{D}.
	\end{equation}
Moreover $f$ is independent of the mixed $xy$-term, then each $\mathcal{H}_{s}^{\alpha} \cap D$ in the union is a real hypersurface of class $\mathcal{C}^{\infty}$ excepts possibly at the set of the isolated singularities.
\end{proposition}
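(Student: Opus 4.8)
The plan is to derive the displayed identity from a fact already established inside the proof of Theorem~\ref{1st_theo}, and then to obtain the smoothness statement by a Morse-theoretic analysis of the (pluriharmonic) function cutting out the hypersurfaces $\mathcal{H}_s^{\alpha}$. Throughout set $\phi(z):=||x||^2-||y||^2-\big(\alpha(x)+f(x,y)\big)$ for $z=x+iy$, so that by the very definitions $\mathcal{H}_s^{\alpha}=\{\phi=s\}$ and $\mathcal{A}_{\alpha}=\{z:\phi(z)+R^2\le 0\}=\{\phi\le -R^2\}$.

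First I would record the chain (\ref{s-R2}). The left-hand equality $\widehat{K_{\alpha,\partial}}=\mathcal{A}_{\alpha}\cap\overline{D}$ is precisely Claim~\ref{claim_2}, proved within Theorem~\ref{1st_theo}, so nothing further is needed there. For the remaining equality I would simply note that the family $\{\mathcal{H}_s^{\alpha}\}_{s\in\R}$ is the partition of $\C^n$ into level sets of $\phi$; hence the sublevel set $\mathcal{A}_{\alpha}=\{\phi\le -R^2\}$ equals $\bigcup_{s\le -R^2}\mathcal{H}_s^{\alpha}$, and intersecting with $\overline{D}$ gives $\mathcal{A}_{\alpha}\cap\overline{D}=\big(\bigcup_{s\le -R^2}\mathcal{H}_s^{\alpha}\big)\cap\overline{D}$. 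This part is purely formal and uses only the definitions.

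For the smoothness assertion I would exploit that $\phi$ is pluriharmonic: since $||x||^2-||y||^2=\textsf{Re}\big(\sum_{j=1}^n z_j^2\big)$ and $\alpha(x)+f(x,y)=\textsf{Re}\,F_{\alpha}(z)$, we have $\phi=\textsf{Re}\,\Psi$ with $\Psi(z)=\sum_{j=1}^n z_j^2-F_{\alpha}(z)$, the holomorphic function already appearing in (\ref{comp_ana_sets}). A point of $\mathcal{H}_s^{\alpha}=\{\phi=s\}$ fails to be a regular point of the hypersurface exactly when $\nabla\phi$ vanishes there, i.e. at a critical point of $\phi$. Thus it suffices to show that $\phi$ is a Morse function on $\R^n+iY$: its critical points are then non-degenerate, hence isolated by \cite[Corollary~2.3]{milnor}, and $\mathcal{H}_s^{\alpha}\cap D$ is a $\mathcal{C}^{\infty}$ real hypersurface away from this isolated set.

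The verification of the Morse condition is where the hypothesis that $f$ carries no mixed $xy$-terms is used, and it is the only delicate point. Writing $f(x,y)=f_1(x)+f_2(y)$, the partial $\partial\phi/\partial y_k=-2y_k-\partial f_2/\partial y_k$ is independent of $x$, so every mixed block $\partial^2\phi/\partial x_j\partial y_k$ of the real Hessian vanishes. Pluriharmonicity gives $\partial^2\phi/\partial z_j\partial\overline{z_k}=0$, which through the same Wirtinger computation that produced (\ref{wirti}) yields $\partial^2\phi/\partial y_j\partial y_k=-\,\partial^2\phi/\partial x_j\partial x_k$. Hence $\nabla^2\phi$ is block-diagonal of the form $\mathrm{diag}(A,-A)$, where $A=\big(\partial^2(||x||^2-(\alpha+f))/\partial x_j\partial x_k\big)$ is exactly the matrix (\ref{matrix_cond}); as $\alpha$ is a special barrier, $A$ is positive definite. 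Therefore $\det(\nabla^2\phi)=\det(A)\det(-A)=(-1)^n(\det A)^2\neq 0$ at every point, so $\phi$ is Morse and the conclusion follows. The crux, and the reason the argument closes without any extra hypothesis, is that in contrast to the compact-fence function $\sigma=||x||^2-(\alpha+f)$ of Proposition~\ref{lem_crit}, the defining function $\phi$ of $\mathcal{H}_s^{\alpha}$ contains the additional $-||y||^2$ and is genuinely pluriharmonic, which forces the exact relation between the two diagonal blocks and lets positive definiteness of $A$ alone settle non-degeneracy.
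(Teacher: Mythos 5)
Your proof is correct and follows essentially the same route as the paper: the first equality is Claim~\ref{claim_2}, the second is the formal decomposition of the sublevel set $\{\phi\le -R^2\}$ into level sets, and the smoothness claim is obtained by showing $\mu=\phi$ is Morse via the vanishing of the mixed $xy$-blocks (from $f=f_1+f_2$) together with the relation $\partial^2\phi/\partial y_j\partial y_k=-\partial^2\phi/\partial x_j\partial x_k$, so that $\nabla^2\phi=\mathrm{diag}(A,-A)$ with $A$ the positive definite matrix (\ref{matrix_cond}) and the critical points are isolated by \cite[Corollary 2.3]{milnor}. Your derivation of the block relation directly from the pluriharmonicity of $\phi=\textsf{Re}\,\Psi$ is in fact slightly cleaner than the paper's back-reference to (\ref{wirti}), and your closing observation that the extra $-||y||^2$ term is what makes the two diagonal blocks exact negatives of one another is accurate.
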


	\begin{proof}
	A simple computation along with Claim \ref{claim_2} shows the equality in equation (\ref{s-R2}). Set $\mu(x,y):=||x||^2-||y||^2-\big(\alpha(x)+f(x,y)\big)$ and $f(x,y)=f_1(x)+f_2(y)$ for all $x,y\in\R^n.$ Note that the non-degeneracy of a critical point $p:=(p_1,p_2)\in D$ of $\mu$ follows from the matrix mentioned in equation (\ref{non_deg_matrix}) replacing $\sigma$ by $\mu$, and using the identity (\ref{wirti}). To verify this, we finally obtain the following identities for $1\leq j,k\leq n$.

$$\frac{\partial^2\mu}{\partial x_j\partial {y_k}}(p_1,p_2)     
=\Big(\frac{\partial}{\partial x_j}\big(-2y_k-\frac{\partial f_2}{\partial y_k}\big)\Big)(p_1,p_2)=0,$$
and 
$$\frac{\partial^2\mu}{\partial y_j\partial {x_k}}(p_1,p_2)
=\Big(\frac{\partial}{\partial y_j}\big(2x_k-\frac{\partial (\alpha+f_1)}{\partial x_k}\big)\Big)(p_1,p_2)=0.$$

Therefore as earlier in Proposition \ref{lem_crit}, the matrix described in equation (\ref{non_deg_matrix}) is non-singular. Finally by \cite[Corollary 2.3]{milnor} all the non-degenerate critical points are isolated and the result follows for $\mathcal{H}_{s}^{\alpha}\cap D$.
\end{proof}


\section{Schlichtness of the envelope via Stout's theorem on analytic continuation}\label{sec_6}	

In this section, we state and prove two theorems concerning the envelope of holomorphy of truncated tube domains. The theorems presented below generalize Theorem \ref{JP2theorem} in $\C^n$ respectively for $n\geq 2$ and $n\geq 3$. 

\begin{theorem}\label{2nd_theo}
	Let $X,Y\Subset \R^n~(n\geq 2)$ be convex domains, and set ${D=X+iY}$. Denote ${H:=H_x+ibY}$, where $H_x\subset X$ is a compact set such that ${(X\setminus H_x)+ibY}$ is connected. Let $E:=b\widehat{H}\setminus\textsf{int}_{\overline{D}}(\widehat{H})$ be given by $$E=\{z\in \overline{D}: \textsf{Re}(F_j(z))=0 \text{ for all }1\leq j\leq m\},$$ for some holomorphic functions $F_j:\C^n \to \C$ with $1\leq j\leq m$. If $\widehat{H}\subset H_x+i\overline{Y}$, then the envelope of holomorphy of the truncated tube domain $\mathcal{T}:=(X\setminus H_x)+iY$ is schlicht, and it is given by $$\mathcal{D}:=\{z\in D: \delta_j\textsf{Re}(F_j(z))>0 \text{ for all } 1\leq j\leq m\},$$
for some choice of $\delta_j\in\{ -1,1\}$ with $j\in \{1,2,\ldots, m\}.$
\end{theorem}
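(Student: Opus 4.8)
The plan is to deduce the result from Stout's analytic-continuation theorem (Theorem \ref{stout_theo}, condition (i), which is available since $n\geq 2$), played out on the candidate envelope $\mathcal{D}$ itself. Concretely, I take the ambient space to be $\C^n$, the bounded domain of holomorphy to be $\mathcal{D}$, the exceptional boundary set to be $E$, and $\Omega=\mathcal{T}$. The first task is to check that $\mathcal{D}$ really is a bounded domain of holomorphy. Boundedness is clear since $\mathcal{D}\subset D$ and $D=X+iY$ is bounded. For pseudoconvexity, note that each $F_j$ is entire, so every $-\delta_j\,\textsf{Re}(F_j)$ is pluriharmonic, hence plurisubharmonic; therefore each set $\{\delta_j\,\textsf{Re}(F_j)>0\}$ is pseudoconvex, and $\mathcal{D}=D\cap\bigcap_{j=1}^m\{\delta_j\,\textsf{Re}(F_j)>0\}$ is an intersection of pseudoconvex open sets with the convex (hence pseudoconvex) set $D$. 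A bounded connected pseudoconvex open set in $\C^n$ is a domain of holomorphy, so once connectedness of $\mathcal{D}$ is secured (discussed below) this step is complete. The signs $\delta_j\in\{-1,1\}$ are pinned down by the requirement $\mathcal{T}\subset\mathcal{D}$: since $\widehat{H}\subset H_x+i\overline{Y}$ and every $z=x+iy\in\mathcal{T}$ has $x\notin H_x$, the tube is disjoint from $\widehat{H}$ and from $E\subset b\widehat{H}$, and one lets $\delta_j$ record the sign of $\textsf{Re}(F_j)$ on the chamber of $D\setminus\bigcup_j\{\textsf{Re}(F_j)=0\}$ containing $\mathcal{T}$.

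The technical heart is the $\mathcal{O}(\overline{\mathcal{D}})$-convexity of $E$, and here the hypothesis that $E$ is cut out by real parts of \emph{entire} functions is exactly what makes the argument clean, mirroring the alternative proof of Theorem \ref{JP2theorem} in Section \ref{sec_3}. Fix $p\in\overline{\mathcal{D}}\setminus E$. Because $p\in\overline{\mathcal{D}}$ we have $\delta_j\,\textsf{Re}(F_j(p))\geq 0$ for every $j$, and because $p\notin E$ there is an index $j_0$ with $\textsf{Re}(F_{j_0}(p))\neq 0$; combining these gives $\delta_{j_0}\,\textsf{Re}(F_{j_0}(p))>0$. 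Consider the entire function $h_0(z):=\exp\bigl(\delta_{j_0}F_{j_0}(z)\bigr)$, whose modulus is $|h_0(z)|=\exp\bigl(\delta_{j_0}\,\textsf{Re}(F_{j_0}(z))\bigr)$. On $E$ all real parts vanish, so $|h_0|\equiv 1$ there, whereas $|h_0(p)|=\exp\bigl(\delta_{j_0}\,\textsf{Re}(F_{j_0}(p))\bigr)>1$. Setting $f:=h_0/h_0(p)\in\mathcal{O}(\C^n)\subset\mathcal{O}(\overline{\mathcal{D}})$ gives $f(p)=1$ and $\|f\|_E=1/|h_0(p)|<1$. By Definition \ref{ODRD-cvx} this proves $E$ is $\mathcal{O}(\overline{\mathcal{D}})$-convex, so hypothesis (i) of Theorem \ref{stout_theo} holds.

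It remains to install the topological-geometric hypotheses of Theorem \ref{stout_theo} and read off the conclusion. I would verify that $E\subset b\mathcal{D}$ is compact (it is closed in the compact set $\overline{D}$, and lies in $b\mathcal{D}$ because its points are limits of points of $\mathcal{D}$ just outside $\widehat{H}$), that $\mathcal{T}$ is connected (immediate from the hypothesis that $(X\setminus H_x)+ibY$ is connected together with convexity of $Y$), and that $\mathcal{T}\cup(b\mathcal{D}\setminus E)$ is a neighborhood of $b\mathcal{D}\setminus E$ in $\overline{\mathcal{D}}$, i.e. that $\mathcal{T}$ contains a one-sided collar of the outer boundary. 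This last point uses $\widehat{H}\subset H_x+i\overline{Y}$: over $X\setminus H_x$ the sets $\mathcal{D}$ and $\mathcal{T}$ coincide, so near every point of the outer boundary the interior of $\overline{\mathcal{D}}$ is already contained in $\mathcal{T}$, while the inner boundary adjacent to $\widehat{H}$ is precisely $E$. Granting these, Theorem \ref{stout_theo}(i) yields that every $f\in\mathcal{O}(\mathcal{T})$ extends holomorphically and, since $\mathcal{T}$ is connected, that $\mathcal{E}(\mathcal{T})=\mathcal{D}$; as $\mathcal{D}$ is a univalent domain in $\C^n$, the envelope is schlicht.

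The main obstacle I anticipate is this geometric bookkeeping rather than the function theory. One must (a) show the sign pattern $(\delta_j)$ is globally consistent, i.e. that $\mathcal{T}$ lies in a single chamber of the hypersurface arrangement $\bigcup_j\{\textsf{Re}(F_j)=0\}$ inside $D$, and that with this choice $\mathcal{T}\subset\mathcal{D}$; (b) identify $b\mathcal{D}\cap D$ with $E$ and $b\mathcal{D}\cap bD$ with the genuine outer boundary, so that the collar condition is not spoiled by stray interior components of $b\mathcal{D}$ near $\widehat{H}$ arising when some but not all $\delta_j\,\textsf{Re}(F_j)$ vanish (this is the delicate point for $m>1$, and is where the identity $E=b\widehat{H}\setminus\textsf{int}_{\overline{D}}(\widehat{H})$ must be used to control the full interior boundary); and (c) establish connectedness of $\mathcal{D}$, for which I would argue, in the spirit of Claim \ref{claim_ctd} of Theorem \ref{1st_theo}, that any point of $\mathcal{D}$ can be joined to $\mathcal{T}$ through the complement of $\widehat{H}$, using that $\widehat{H}$ sits in the slab $H_x+i\overline{Y}$ and that $n\geq 2$ prevents this removal from disconnecting $D$. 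The interplay between possibly several defining functions $F_j$ and the single compact fence $\widehat{H}$ is the subtle ingredient, and the containment $\widehat{H}\subset H_x+i\overline{Y}$ is what ultimately confines the genuine extension region $\mathcal{D}\setminus\mathcal{T}$ over the hole $H_x$.
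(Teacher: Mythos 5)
Your proposal follows essentially the same route as the paper: establish pseudoconvexity of the candidate domain $\mathcal{D}$, prove $\mathcal{O}(\overline{\mathcal{D}})$-convexity of $E$ via the entire function $\exp(\delta_{j_0}F_{j_0})$ normalized to take the value $1$ at the test point, and then invoke Theorem \ref{stout_theo}(i) with $\Omega=\mathcal{T}$ to conclude $\mathcal{E}(\mathcal{T})=\mathcal{D}$. The only (cosmetic) divergence is that you get pseudoconvexity directly from the pluriharmonicity of $-\delta_j\textsf{Re}(F_j)$ and sublevel sets of plurisubharmonic functions, whereas the paper exhausts $\mathcal{D}_{F_j}$ by Sard-regular sublevel domains with Levi-flat boundary and applies Behnke--Stein; both are valid, and the delicate points you flag (sign consistency, the collar condition, connectedness of $\mathcal{D}$) are treated no more explicitly in the paper's own argument.
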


\begin{proof}
Notice that the choice of $\delta_j$'s mentioned above depends on which side $\widehat{H}$ lies. For instance, if $\widehat{H} \subset \{z\in \overline{D}: \textsf{Re}(F_j(z))\leq0 \text{ for all }1\leq j\leq m\}$, then in the definition of $\mathcal{D}$, we take $\delta_j=1$ for all $j\in  \{1,2,\ldots, m\}$. Central idea of the proof relies on an application of Stout's theorem on analytic continuation (Theorem \ref{stout_theo}). Since $E \subset b\mathcal{D}$, the first step is to establish both the pseudoconvexity of $\mathcal{D}$, and  $\mathcal{O}(\overline{\mathcal{D}})$-convexity of $E.$ \\
	\vspace*{-.3cm} 
	
	\noindent\textbf{Step I: Proving $\mathcal{D}$ is pseudoconvex.}\\
	
	\vspace*{-.3cm} 
	
	Due to Sard's theorem \cite[Theorem 6.1]{milnor}, the set of all
	critical values of $\delta_j\textsf{Re}(F_j)$ has Lebesgue measure zero in $\R$ for all $1\leq j\leq m$. Thus for all fixed ${j\in \{1,2,\ldots, m\}}$, there exists a decreasing sequence $\{s_n^j\}_{n\in \N}$ of real numbers with $s_n^j\downarrow 0$ as $n \uparrow \infty$, such that the following sets 
	$$\mathcal{D}_{F_j,s_n^j}:=\{z \in D: \delta_j\textsf{Re}(F_j(z))>s_n^j\}$$ has no critical points at $b \mathcal{D}_{F_j,s_n^j}$. Since $\mathcal{D}_{F_j,s_n^j}$ is an intersection of the domain ${\{z \in \C^n: \delta_j\textsf{Re}(F_j(z))>s_n^j\}}$ with Levi-flat boundary\footnote{The Laplacian is zero due to being the real part of a holomorphic function $F_j$.} and the pseudoconvex domain $D$, it is pseudoconvex by \cite[Corollary 3.4.7]{kra}. Moreover, as ${\mathcal{D}_{F_j,s_n^j}\subset \mathcal{D}_{F_j,s_{n+1}^j}}$ for all $n\in \N,$ we apply Behnke-Stein theorem (see \cite{beh-ste}) to the increasing union of pseudoconvex domains $\mathcal{D}_{F_j,s_n^j}$ to conclude that for each ${j\in \{1,2,\ldots,m\}}$ the following domain $$\mathcal{D}_{F_j}:=\cup_{\{s_n^j:n\in \N\}}\mathcal{D}_{F_j,s_n^j}=\{z \in D: \delta_j\textsf{Re}(F_j(z))>0\}$$ is pseudoconvex. Finally, the pseudoconvexity of the domain $\mathcal{D}= \cap_{j=1}^m \mathcal{D}_{F_j}$ follows from {\cite[Corollary 3.4.7]{kra}}.\\

	\noindent	\textbf{Step II:  Showing $E$ is $\mathcal{O}(\overline{\mathcal{D}})$-convex.}\\
	
	\vspace*{-.3cm} 
	
	For a given $p=x_0+iy_0\in \overline{\mathcal{D}}\setminus E$, there exists a ${k\in \{1,2,\ldots,m\}}$ such that $\delta_k\textsf{Re}(F_k(p))=\eps>0$.  
	Define the entire function ${h_{F_k}(z):=\frac{\exp(\delta_kF_k(z))}{\exp(\epsilon)}}$, and notice that  $$||h_{F_k}||_{E}:=\sup_{z\in E}|h_{F_k}(z)|= \frac{1}{\exp(\epsilon)}<1.$$
Since $|h_{F_k}(p)|=1$, it follows $h_{F_k}(p)=\exp(i\theta')$ for some $\theta'\in \R$. Notice that the  composite function $\Phi_{F_k}:\C^n \to \C$ defined by $\Phi_{F_k}(z):=\exp(-i\theta')h_{F_k}(z)$ for $z\in \C^n,$ is entire. Moreover, $$\Phi_{F_k}(p):=\exp(-i\theta')h_{F_k}(p)=\exp(-i\theta')\exp(i\theta')=1.$$ As the modulus is invariant under the unimodular rotation map, it implies $$||\Phi_{F_k}||_{E}=\sup\limits_{z\in E}|\Phi_{F_k}(z)|=\sup\limits_{z\in E}|h_{F_k}(z)|<1.$$ Hence $E$ is  $\mathcal{O}(\overline{\mathcal{D}})$-convex by Definition \ref{ODRD-cvx}.\\

	\begin{figure}[!htb]
		\centering
		\vspace*{-4.6cm}
		\def\svgwidth{\linewidth}
		\fontsize{12pt}{1em}
		\scalebox{.75}{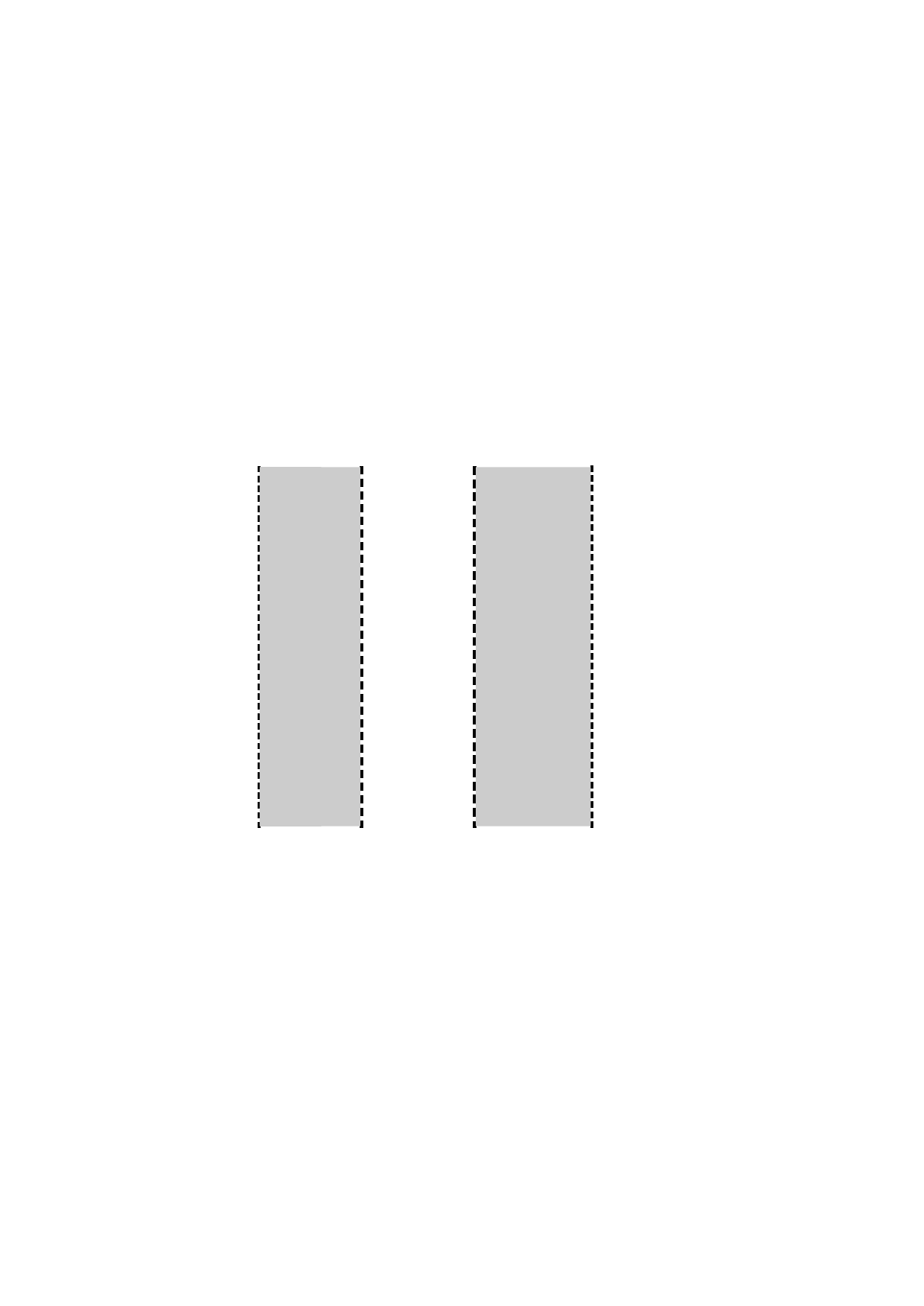}
		\vspace*{-4.9cm}
		\caption{Geometry of the domain $\mathcal{D}\subset \C^n$ in Theorem \ref{2nd_theo}.}
		\label{reF_theo}
	\end{figure}

	\noindent \textbf{Step III: Proving $\mathcal{E}(\mathcal{T})=\mathcal{D}$.}\\ 
	
	\vspace*{-.3cm} 
	
	Note that due to $X\setminus H_x$ being an image of a continuous (``projection") map from the connected set $(X\setminus H_x)+ibY$, it is connected. The connectedness of ${\mathcal{T}={(X\setminus H_x)+iY}}$ therefore follows due to $Y$ being convex (and hence connected). Next the condition $\widehat{H}\subset H_x+i\overline{Y}$ suggests that 
	$\mathcal{T}=(X\setminus H_x)+iY$ is an open set such that $\mathcal{T} \cup (b\mathcal{D}\setminus E)$ is a
	neighborhood of $ b \mathcal{D}\setminus E$ in $\overline{\mathcal{D}}$ (see Figure \ref{reF_theo}). Therefore by Theorem \ref{stout_theo} we conclude that the envelope of holomorphy $\mathcal{E}(\mathcal{T})=\mathcal{D}$. In particular, the envelope is schlicht.
\end{proof}

\begin{remark}
	One could obtain Theorem \ref{2nd_theo} using \cite[Theorem 1]{por-thie} as well. Note that $bD\setminus H$ would become connected under our hypothesis. To verify this, note that 
	\begin{equation}\label{bD_minus_H}
		bD\setminus H=(bX+i\overline{Y})\cup ((X\setminus H_x)+ibY).
	\end{equation}
Due to the convexity of $X,Y$ together with our hypothesis, both sets on the right-hand side are connected. First to prove the CR($bD$)-convexity of $H$, suppose there exists a point $p_1+ip_2\in \widehat{H}\cap (bD\setminus H)$.  Since by hypothesis $\widehat{H}\subset H_x+i\overline{Y}$, it implies $p_1+ip_2\in (H_x+i\overline{Y})\cap (bD\setminus H).$ Using equation (\ref{bD_minus_H}), it follows either $p_1\in H_x\cap bX$ or $p_1\in H_x\cap (X\setminus H_x)$, neither of which can occur. Since $\widehat{H} \supset \widehat{H}_{\mathcal{A}(D)}$, it follows that 
\begin{equation}\label{cr_cond_D}
\widehat{H}_{\mathcal{A}(D)} \cap bD =H.
\end{equation}
Thus $H$ is CR($bD$)-convex. To apply the aforementioned theorem, we can arrange a slightly deformed pseudoconvex domain $D_{\eps}\subset D$ with $\mathcal{C}^2$-boundary  satisfying $bD_{\eps}\cap D \subset \mathcal{T}$, such that $bD_{\eps}\setminus H$ remains connected (see Lemma \ref{cvx-lem}). Since $\widehat{H}_{\mathcal{A}(D_{\eps})}\subset \widehat{H}_{\mathcal{A}(D)}$, it follows from equation (\ref{cr_cond_D}) that $\widehat{H}_{\mathcal{A}(D_{\eps})} \cap bD_{\eps} =H,$ and hence  $H$ is CR($bD_{\eps}$)-convex. If  ${\widehat{H} \subset \{z\in \overline{D}: \delta_j\textsf{Re}(F_j(z))\leq0 \text{ for all }1\leq j\leq m\}}$ for some choice of $\delta_j\in\{ -1,1\}$ with $j\in \{1,2,\ldots, m\},$ then it follows $\mathcal{D}\subset D\setminus\widehat{H}\subset D\setminus \widehat{H}_{\mathcal{A}(D_{\eps})}.$ Since by the aforementioned theorem, every function holomorphic on $\mathcal{T}$ has holomorphic extension up to $D\setminus \widehat{H}_{\mathcal{A}(D_{\eps})},$ and that $\mathcal{D}$ is pseudoconvex (see Step I), the envelope $\mathcal{E}({\mathcal{T}})=\mathcal{D}=D\setminus \widehat{H}$.
\end{remark}
It is important to note that one could replace the hypothesis $\widehat{H}$ by $\widehat{H}_{\mathcal{R}}$ in Theorem \ref{2nd_theo} to obtain schlicht envelope. The proof of the theorem remains the same as earlier even when $b\widehat{H}_{\mathcal{R}} \setminus\textsf{int}_{\overline{D}}(\widehat{H}_{\mathcal{R}})$ is written as a common zero locus of finitely many holomorphic polynomials. However as a special case we address this in an easy corollary of Theorem \ref{2nd_theo} obtained by putting $m=1$.
\begin{corollary}
	Let $X,Y$ be two bounded convex domains in $\R^n$ with $n\geq 2$, and set $D=X+iY$. Denote $H=H_x+ibY$, where $H_x\subset X$ be any compact set such that $X\setminus H_x$ is connected. If $$E:=b\widehat{H}_{\mathcal{R}} \setminus\textsf{int}_{\overline{D}}(\widehat{H}_{\mathcal{R}})=\{z\in \overline{D}: \textsf{Re}(F(z))=0\}$$ for some holomorphic function $F:\C^n \to \C$, and if $\widehat{H}_{\mathcal{R}}\subset H_x+i\overline{Y},$ then the envelope of holomorphy of the truncated tube domain $\mathcal
	T_{F}:=(X\setminus H_x)+iY$ is schlicht, and is given by $$\{z\in X+iY:\delta\textsf{Re}(F(z))>0\},$$ for some $\delta\in \{-1,1\}$ depending upon the orientation of $E.$
\end{corollary}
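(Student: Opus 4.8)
The plan is to specialize the $\widehat{H}_{\mathcal{R}}$-variant of Theorem \ref{2nd_theo} to the case $m=1$, retaining the three-step structure of that proof. I would set $\mathcal{D}:=\{z\in D:\delta\,\textsf{Re}(F(z))>0\}$, where the sign $\delta\in\{-1,1\}$ is fixed so that $\widehat{H}_{\mathcal{R}}\subset\{z\in\overline{D}:\delta\,\textsf{Re}(F(z))\leq 0\}$; this choice is available precisely because $E=b\widehat{H}_{\mathcal{R}}\setminus\textsf{int}_{\overline{D}}(\widehat{H}_{\mathcal{R}})$ is the zero set of the single function $\textsf{Re}(F)$ and $\widehat{H}_{\mathcal{R}}$ lies entirely on one side of it. The goal is then to verify that $\mathcal{D}$ is pseudoconvex and that $E$ is $\mathcal{O}(\overline{\mathcal{D}})$-convex, after which Stout's theorem (Theorem \ref{stout_theo}) delivers the envelope.

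First I would establish the pseudoconvexity of $\mathcal{D}$ exactly as in Step I of Theorem \ref{2nd_theo}: since $\textsf{Re}(F)$ is pluriharmonic, the level hypersurfaces $\{\delta\,\textsf{Re}(F)=s\}$ are Levi-flat, so by Sard's theorem I can pick $s_n\downarrow 0$ avoiding critical values, making each $\mathcal{D}_{s_n}:=D\cap\{\delta\,\textsf{Re}(F)>s_n\}$ a smoothly bounded intersection of the convex (hence pseudoconvex) domain $D$ with a Levi-flat-bounded domain; \cite[Corollary 3.4.7]{kra} then gives pseudoconvexity of each $\mathcal{D}_{s_n}$, and Behnke--Stein (\cite{beh-ste}) applied to the increasing union $\mathcal{D}=\cup_n\mathcal{D}_{s_n}$ yields the pseudoconvexity of $\mathcal{D}$. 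Next, for the $\mathcal{O}(\overline{\mathcal{D}})$-convexity of $E$, I would repeat Step II with the single function $F$: given $p\in\overline{\mathcal{D}}\setminus E$ with $\delta\,\textsf{Re}(F(p))=\epsilon>0$, the entire function $\Phi(z):=\exp(-i\theta')\exp(\delta F(z))/\exp(\epsilon)$, with $\theta'$ chosen so that $\Phi(p)=1$, satisfies $\|\Phi\|_E<1$ because $|\exp(\delta F)|=\exp(\delta\,\textsf{Re}(F))\leq 1$ on $E$.

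Finally I would check the remaining hypotheses of Stout's theorem. Connectedness of $\mathcal{T}_F=(X\setminus H_x)+iY$ follows from the assumption that $X\setminus H_x$ is connected together with the convexity (hence connectedness) of $Y$, and the inclusion $\widehat{H}_{\mathcal{R}}\subset H_x+i\overline{Y}$ guarantees that $\mathcal{T}_F\cup(b\mathcal{D}\setminus E)$ is a neighborhood of $b\mathcal{D}\setminus E$ in $\overline{\mathcal{D}}$. Applying part (i) of Theorem \ref{stout_theo}, valid in all dimensions $n\geq 2$ since $E$ is $\mathcal{O}(\overline{\mathcal{D}})$-convex, I conclude $\mathcal{E}(\mathcal{T}_F)=\mathcal{D}$, which is schlicht.

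The point I expect to matter most, and which justifies keeping the dimension restriction at $n\geq 2$ rather than $n\geq 3$, is that although the hypothesis is phrased in terms of the rational hull $\widehat{H}_{\mathcal{R}}$ (which would ordinarily suggest the $\mathcal{R}(\overline{D})$-convexity condition (ii) of Theorem \ref{stout_theo}, requiring $n\geq 3$), the explicit description $E=\{\textsf{Re}(F)=0\}$ lets me establish the stronger $\mathcal{O}(\overline{\mathcal{D}})$-convexity directly, thereby invoking condition (i) instead. The only genuine care required is in fixing the orientation $\delta$ coherently with the side on which $\widehat{H}_{\mathcal{R}}$ lies, so that the outer boundary $b\mathcal{D}\setminus E$ is indeed captured by $\mathcal{T}_F$; everything else is a routine specialization of Theorem \ref{2nd_theo}.
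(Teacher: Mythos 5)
Your proposal is correct and follows essentially the same route as the paper: the paper obtains this corollary precisely by observing that the proof of Theorem \ref{2nd_theo} goes through verbatim with $\widehat{H}$ replaced by $\widehat{H}_{\mathcal{R}}$ and then setting $m=1$, which is exactly the specialization you carry out (pseudoconvexity of $\mathcal{D}$ via Sard/Levi-flatness/Behnke--Stein, $\mathcal{O}(\overline{\mathcal{D}})$-convexity of $E$ via the $\exp(\delta F)$ peak function, and then Stout's theorem, condition (i)). Your closing remark that the explicit description $E=\{\textsf{Re}(F)=0\}$ is what permits invoking condition (i) of Stout's theorem at $n\geq 2$, rather than the rational-convexity condition (ii) requiring $n\geq 3$, is exactly the point the paper relies on.
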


Next we have two simple corollaries with concise proofs, the details of which are not too technical.
\begin{corollary}
Let $X,Y$ be two bounded convex domains in $\R^n$ with $n\geq 2$, and set $D=X+iY$. Denote $H_x:=\{x\in \R^n: \alpha(x)+f(x)\leq 0\}$ for some special barrier $\alpha$ and its special augmenting function $f$ that doesn't depend on $y$. Then the envelope of holomorphy of the truncated tube domain $(X\setminus H_x)+iY$ is schlicht, and is given by $$\{x+iy\in D: \alpha(x)+f(x)>0\}.$$ 
\end{corollary}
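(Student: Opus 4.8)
The plan is to read this off from Theorem \ref{2nd_theo} with $m=1$, taking for the single holomorphic function $F_1:=F_{\alpha}$ the entire function attached to the special barrier $\alpha$ through $\textsf{Re}F_{\alpha}(x+iy)=\alpha(x)+f(x,y)$ (Definition \ref{good_alpha}). The decisive feature is the hypothesis that the special augmenting function does not depend on $y$: writing $g(x):=\alpha(x)+f(x)$, we obtain $\textsf{Re}F_{\alpha}(x+iy)=g(x)$ for all $x,y\in\R^n$, so the pluriharmonic function $\textsf{Re}F_{\alpha}$ is constant along every fibre $\{x\}+i\R^n$. In particular, with $H=H_x+ibY$ and $H_x=\{x\in\R^n:g(x)\le 0\}$, one has $H\subset\{z\in\C^n:\textsf{Re}F_{\alpha}(z)\le 0\}$, the maximum value $0$ being attained exactly on $bH_x+ibY$.

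First I would check the hypotheses of Theorem \ref{2nd_theo}. Connectedness of $(X\setminus H_x)+ibY$ follows since $X\setminus H_x=X\cap\{g>0\}$ is convex (a $y$-independent pluriharmonic function is affine, so $\{g>0\}$ is a half-space) and $bY$ is connected. For the hull I would use the entire function $e^{F_{\alpha}}$, whose modulus $|e^{F_{\alpha}(z)}|=e^{g(x)}$ depends only on $x$; by the definition of the polynomial hull this forces $\widehat H\subset\{z:\textsf{Re}F_{\alpha}(z)\le 0\}$, and intersecting with $\overline D=\overline X+i\overline Y$ gives $\widehat H\subset\{x\in\overline X:g(x)\le 0\}+i\overline Y\subset H_x+i\overline Y$, which is precisely the inclusion required by Theorem \ref{2nd_theo}. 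The same $y$-independence then identifies the free boundary, $E=b\widehat H\setminus\textsf{int}_{\overline D}(\widehat H)=\{z\in\overline D:\textsf{Re}F_{\alpha}(z)=0\}$, so the representation of $E$ demanded there holds with $m=1$ and $F_1=F_{\alpha}$.

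Since $H$ lies on the side $\{\textsf{Re}F_{\alpha}\le 0\}$, the correct sign is $\delta_1=+1$, and Theorem \ref{2nd_theo} then delivers at once that $\mathcal{E}\big((X\setminus H_x)+iY\big)$ is schlicht and equals $\{z\in D:\textsf{Re}F_{\alpha}(z)>0\}=\{x+iy\in D:\alpha(x)+f(x)>0\}$. The only genuinely delicate point is the hull computation: one must verify that $E$ is exactly the zero set $\{\textsf{Re}F_{\alpha}=0\}$ and that no spurious boundary pieces arise from the lateral boundary $bX+i\overline Y$, which is controlled precisely by the inclusion $\widehat H\subset H_x+i\overline Y$. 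Once this is in place, the fact that $\textsf{Re}F_{\alpha}$ is a single $y$-independent pluriharmonic (indeed affine) cylinder collapses the geometry to one level family and makes the remaining verifications routine; in fact this last observation gives a transparent alternative route, since it exhibits $(X\setminus H_x)+iY$ as a convex, hence pseudoconvex, tube domain that must coincide with its own envelope of holomorphy.
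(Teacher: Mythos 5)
Your main argument coincides with the paper's: both reduce the corollary to Theorem \ref{2nd_theo} with $m=1$ and $F_1=F_{\alpha}$, and both rest on the observation that a $y$-independent real part of an entire function is affine $\R$-linear (via the pluriharmonicity identity $\frac{\partial^2 u}{\partial x_j\partial x_k}+\frac{\partial^2 u}{\partial y_j\partial y_k}=0$), which makes $H_x$ a half-space and hence gives both the connectedness of $X\setminus H_x$ and the inclusion $\widehat H\subset H_x+i\overline Y$. The one step that you and the paper both treat lightly is the identification $E=b\widehat H\setminus\textsf{int}_{\overline D}(\widehat H)=\{z\in\overline D:\textsf{Re}F_{\alpha}(z)=0\}$: the inclusion $\widehat H\subset\{\textsf{Re}F_{\alpha}\le 0\}$ does follow from $e^{F_{\alpha}}$ as you say, but the reverse containment needed to pin down $E$ is not immediate, since the leaves $\{c\cdot z=\mathrm{const}\}$ have boundary inside $\overline D$ that also meets the lateral boundary $bX+i\overline Y$ and not only $H$; the paper merely cites ``earlier discussions in Section \ref{sec_5}'', which were carried out in a different setting ($Y$ a ball, $K_{\alpha}$ cut out by $||x||^2\le\alpha+f$). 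Your closing remark is therefore the real payoff, and it is a cleaner argument than the one the paper gives: once $g=\alpha+f$ is affine, $X\setminus H_x=X\cap\{g>0\}$ is convex, so $(X\setminus H_x)+iY$ is a convex, hence pseudoconvex, tube domain that coincides set-theoretically with the claimed envelope $\{x+iy\in D: g(x)>0\}$ and is consequently its own schlicht envelope of holomorphy. That observation requires no hull computation at all, closes the gap just mentioned, and I would lead with it rather than relegate it to a parenthetical alternative.
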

\begin{proof}
Denote $H=H_x+ibY$, and note that  there exists a $F_{\alpha}\in \mathcal{O}(\C^n)$ is such that $\textsf{Re}(F_{\alpha}(x+iy))=\alpha(x)+f(x)$ for all $x,y\in \R^n.$ Moreover, from our earlier discussions in Section \ref{sec_5}, it follows $\widehat{H}=\{z\in \overline{D}: \textsf{Re}(F_{\alpha}(z))\leq0\}.$  
 Note that if $\textsf{Re}(F)$ depends only on $x$, then using Cauchy-Riemann equations one can show that $F$ is affine $\R$-linear\footnote{i.e. there exists $(c_1,\ldots, c_n)\in \R^n$ and $d\in \R$ such that $f(z_1,\ldots,z_n)=c_1z_1+\ldots+c_nz_n+d$.}. This means the real Hessian of $\textsf{Re}(F)$ is identically zero everywhere, proving the convexity of $H_x$ (and hence both $\widehat{H}\subset H_x+i\overline{Y}$ and the connectedness of $X\setminus H_x$). Hence by Theorem \ref{2nd_theo} the result follows. 
\end{proof}

\begin{corollary}
Let $r_1, r_2, r_3\in \R^+$, $r_1<r_2$, and $h:\C^n \to \C$ be a holomorphic function such that $\textsf{Re}(h(x+iy))=h_1(x)-h_2(y)$ for all $x+iy\in \C^n$ and some functions $h_j:\R^n\to \R^+\cup \{0\}$ with $j=1,2$. Let ${X:=\{x\in \R^n: h_1(x)<r_2^2\}}$, $H_x:=\{x\in \R^n: h_1(x)\leq r_1^2\}$, and ${Y:=\{y\in \R^n: h_2(y)<r_3^2\}}$ be bounded and convex, and set $D=X+iY$. If $r_1^2-r_3^2$ is a regular value of $\textsf{Re}(h)$, then the envelope of holomorphy of ${(X\setminus H_x)+iY}$ is schlicht, and is given by $$\{x+iy\in D: h_1(x)-h_2(y)> r_1^2-r_3^2\}.$$
\end{corollary}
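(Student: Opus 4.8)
The plan is to obtain this corollary as the special case $m=1$ of Theorem \ref{2nd_theo}. I would first set $F:=h-(r_1^2-r_3^2)$, which is entire and satisfies $\textsf{Re}\,F(x+iy)=h_1(x)-h_2(y)-(r_1^2-r_3^2)$, so that the asserted envelope is precisely $\mathcal{D}=\{z\in D:\textsf{Re}\,F(z)>0\}$. The cheap hypotheses are recorded first: $X$ and $Y$ are bounded convex by assumption, while $H_x=\{h_1\le r_1^2\}$ is closed and bounded, hence compact; since $r_1<r_2$ one has $h_1\le r_1^2<r_2^2$ on $H_x$, so $H_x\Subset X$ and $H_x\cap bX=\emptyset$. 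Moreover $X\setminus H_x$ is connected because $n\ge2$ and one removes a compact convex set from a convex domain, and $bY$ is connected (it is homeomorphic to $S^{n-1}$), so the product $(X\setminus H_x)+ibY$ is connected.

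The core of the argument is the explicit computation of the polynomial hull of $H=H_x+ibY=\{x+iy:h_1(x)\le r_1^2,\ h_2(y)=r_3^2\}$; I would prove
$$\widehat{H}=N:=\{x+iy\in\overline{D}:h_1(x)-h_2(y)\le r_1^2-r_3^2\}=\{z\in\overline{D}:\textsf{Re}\,F(z)\le0\}.$$
For $\widehat{H}\subseteq N$, observe that $\textsf{Re}\,F=h_1(x)-r_1^2\le0$ on $H$, so $|e^{F}|\le1$ there; since $\overline{D}=\overline{X}+i\overline{Y}$ is compact and convex, hence polynomially convex, and $e^{F}\in\mathcal{O}(\C^n)$ is a locally uniform limit of polynomials, any $z_0$ with $\textsf{Re}\,F(z_0)>0$ obeys $|e^{F(z_0)}|>1\ge\max_H|e^{F}|$ and is excluded, giving $\widehat{H}\subseteq\overline{D}\cap\{\textsf{Re}\,F\le0\}=N$. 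For the reverse inclusion I would mimic Proposition \ref{both_hull_same}: for $z_0\in N$ (the case $z_0\in bD$ being immediate, since such a point already lies in $H$), pass to the pure $(n-1)$-dimensional level variety $V:=\{F=F(z_0)\}$ and the component $G$ of $V\cap D$ containing $z_0$, and apply the variety maximum principle of Lemma \ref{key_max_lemma_var}.

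The decisive geometric point in that reverse inclusion is that $bG\subseteq V\cap b\overline{D}$ is trapped inside $H$. Indeed, a boundary point with $x\in bX$ would force $h_1(x)=r_2^2$ and hence $\textsf{Re}\,F=r_2^2-h_2(y)-(r_1^2-r_3^2)\ge r_2^2-r_1^2>0$, contradicting $V\subseteq\{\textsf{Re}\,F\le0\}$; thus every boundary point has $y\in bY$, which together with $\textsf{Re}\,F\le0$ yields $h_1(x)\le r_1^2$, i.e.\ $x+iy\in H$. Consequently $|p(z_0)|\le\max_{bG}|p|\le\max_H|p|$ for every polynomial $p$, so $z_0\in\widehat{H}$ and $\widehat{H}=N$. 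This step is where I expect the main difficulty, and the strict inequality $r_1<r_2$ is exactly what keeps $bG$ away from $bX$.

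With $\widehat{H}=N$ established, the two outstanding hypotheses of Theorem \ref{2nd_theo} are short. Since $h_2\le r_3^2$ on $\overline{D}$, every point of $N$ satisfies $h_1(x)\le(r_1^2-r_3^2)+h_2(y)\le r_1^2$, so $\widehat{H}=N\subseteq H_x+i\overline{Y}$. To identify $E=b\widehat{H}\setminus\textsf{int}_{\overline{D}}(\widehat{H})$ I would use $\textsf{int}(\overline{D})=D$ (valid as $X,Y$ are convex domains) and the regular-value hypothesis, which makes $\textsf{Re}\,F$ change sign across $\{\textsf{Re}\,F=0\}$: a direct computation then gives $\textsf{int}_{\overline{D}}(N)=\{z\in\overline{D}:\textsf{Re}\,F<0\}$ and hence $E=\{z\in\overline{D}:\textsf{Re}\,F(z)=0\}$, exactly the form demanded by the theorem. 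Since $\widehat{H}$ lies on the side $\{\textsf{Re}\,F\le0\}$, the sign convention forces $\delta_1=1$, and Theorem \ref{2nd_theo} with $m=1$, $F_1=F$ yields that $\mathcal{E}(\mathcal{T})$ is schlicht and equals $\{x+iy\in D:h_1(x)-h_2(y)>r_1^2-r_3^2\}$, as claimed.
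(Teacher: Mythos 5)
Your proposal is correct and follows essentially the same route as the paper: both compute $\widehat{H}=\{z\in\overline{D}:h_1(x)-h_2(y)\le r_1^2-r_3^2\}$ by combining the maximum principle on the level varieties of $h$ (for the inclusion of the sublevel set into the hull) with the separating entire function $e^{h}/e^{r_1^2-r_3^2}$ (for the reverse inclusion), then use the regular-value hypothesis to identify $E$ and invoke Theorem \ref{2nd_theo} with $\delta=1$. Your write-up merely supplies more of the routine verifications (connectedness, $\widehat{H}\subset H_x+i\overline{Y}$, the trapping of the variety boundary in $H$) that the paper leaves implicit.
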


\begin{proof}
Observe that in this case the polynomial hull $\widehat{H}$ of $H:=H_{x}+ibY$ is given by $A:=\{x+iy\in \overline{D}: h_1(x)-h_2(y)\leq r_1^2-r_3^2\}$, where $D=X+iY$. To verify this, as earlier note that every point $z_0=x_0+iy_0 \in A$ there is the following level set $$\mathcal{H}_{\textsf{Re}(h(z_0))}:=\{z\in \C^n: \textsf{Re}(h(z))=\textsf{Re}(h(z_0))\}$$ containing $z_0$.  
Since $bY\subset h_2^{-1}(r_3^2)$, by similar techniques used in Claim \ref{claim_1}, one can show that $\mathcal{H}_{\textsf{Re}(h(z_0))} \cap (X+ibY)\subset (H_{x}+ibY).$ By maximum modulus principle (see e.g. Lemma [\ref{key_max_lemma_var}, \ref{gun-ross}]) this yields ${A\subset \widehat{H}}.$ Other inclusion is straightforward due to the entire holomorphic function ${e^{h(z)}}/{e^{r_1^2-r_3^2}}$ which seperates points outside of $A$ from $H$. Since $r_1^2-r_3^2$ is a regular value of $\textsf{Re}(h)$, it implies 
$$E:=b\widehat{H}\setminus\textsf{int}_{\overline{D}}(\widehat{H}) =\{x+iy\in \overline{D}: \textsf{Re}\big(h(z)-r_1^2-r_3^2\big)=0\}.$$ Therefore the result follows due to Theorem \ref{2nd_theo} with the choice of $\delta=1.$
\end{proof}

Following Theorem \ref{haz-por_sch_2}, it is customary to ask about a sufficient condition in dimension three or higher which guarantees the schlichtness of the the envelope of holomorphy of truncated tube domains. As stated earlier in [\textbf{(iv)}, page \pageref{part:iv}] of Section \ref{sec_1}, we next establish the following theorem.

\begin{theorem}\label{3rd_theo}
Let $X$ and $Y$ be any two bounded convex domains in $\R^n, ~{n\geq 3}$.
Denote $K=K_x+ibY$, where $K_x\subset X$ is a compact convex set. Let the following two conditions are satisfied.
	
\begin{itemize}
	\item [(i)] The set $b\widehat{K}_{\mathcal{R}}\cap D$ is a Levi-flat hypersurface of class $\mathcal{C}^2$ in $\C^n$.
	
	\item [(ii)] Every point $p\in N:=\textsf{int}_{\overline{D}}(\widehat{K}_{\mathcal{R}})$  
	lies within a purely $(n-1)$-dimensional analytic variety $V_p$ that doesn't intersect the set ${E:=b\widehat{K}_{\mathcal{R}}\setminus N}$.
\end{itemize}	
Then the envelope of holomorphy of the truncated tube domain $T:=(X\setminus K_x)+iY$ is given by $D\setminus\widehat{K}_{\mathcal{R}},$where $D=X+iY.$ In particular, the envelope is schlicht.
\end{theorem}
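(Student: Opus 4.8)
The plan is to take the candidate envelope to be $\mathcal{D}:=D\setminus\widehat{K}_{\mathcal{R}}$ and to realize it as the output of Stout's analytic continuation theorem (Theorem \ref{stout_theo}) in its codimension-one, rationally convex form, i.e. item (ii), which is exactly the version requiring $n\geq 3$. Concretely, in the notation of that theorem I would let the ambient domain of holomorphy be $\mathcal{D}$, the interior open set be $T=(X\setminus K_x)+iY$, and the exceptional boundary set be $E=b\widehat{K}_{\mathcal{R}}\setminus N$. As preliminary reductions I would check that $T\subset\mathcal{D}$ (equivalently that the $x$-projection of $\widehat{K}_{\mathcal{R}}$ inside $D$ stays within $K_x$, which one verifies as in the spherical model of Proposition \ref{both_hull_same}), that $T$ is connected (since $K_x$ is compact convex and $X$ convex, $X\setminus K_x$ is connected for $n\geq 2$, and $Y$ is convex), and that $T\cup(b\mathcal{D}\setminus E)$ is a one-sided neighborhood of the outer boundary $b\mathcal{D}\setminus E$ in $\overline{\mathcal{D}}$. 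There then remain two substantial points to establish before Stout's theorem applies: that $\mathcal{D}$ is a bounded domain of holomorphy, and that $E$ is $\mathcal{R}(\overline{\mathcal{D}})$-convex.

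\textbf{Step I (pseudoconvexity of $\mathcal{D}$).} Here condition (i) does the work. Along the outer part $b\mathcal{D}\setminus E$ the boundary agrees with $bD$, which is convex and hence pseudoconvex; along the inner boundary $b\widehat{K}_{\mathcal{R}}\cap D$, which by hypothesis is a Levi-flat $\mathcal{C}^2$ hypersurface, $\mathcal{D}$ is pseudoconvex because a Levi-flat hypersurface is weakly pseudoconvex from either side. To globalise this and to handle the corner where the two pieces meet, I would follow Step III of the proof of Theorem \ref{1st_theo}: choose a $\mathcal{C}^2$ defining function for the Levi-flat part, use Sard's theorem (\cite[Theorem 6.1]{milnor}) to extract a decreasing sequence of regular values $r_k\downarrow 0$ producing smooth pseudoconvex approximants $\mathcal{D}_{r_k}\Subset\mathcal{D}$ (each the intersection of the pseudoconvex domain $D$ with a domain bounded by a Levi-flat, hence pseudoconvex, level hypersurface, via \cite[Corollary 3.4.7]{kra}), and pass to the increasing union by the Behnke--Stein theorem (\cite{beh-ste}). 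This gives pseudoconvexity of $\mathcal{D}$, hence that it is a bounded domain of holomorphy; already this yields schlichtness and the inclusion $\mathcal{E}(T)\subseteq\mathcal{D}$.

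\textbf{Step II ($\mathcal{R}(\overline{\mathcal{D}})$-convexity of $E$).} This is where condition (ii) enters, and where I expect the main obstacle to lie. The set $E=b\widehat{K}_{\mathcal{R}}\setminus N$ is a ``shell'' whose hole is the hull interior $N$; a round shell is never rationally convex, so the real content of condition (ii) is that every point trapped behind $E$ lies on a purely $(n-1)$-dimensional analytic variety $V_p$ missing $E$, i.e. that $E$ is convex with respect to varieties of dimension $n-1$ in the sense of Definition \ref{var_cvx}. The task is to upgrade this geometric separation to the functional separation demanded by Definition \ref{ODRD-cvx}: for each $p\in\overline{\mathcal{D}}\setminus E$ produce $h\in\mathcal{O}(\overline{\mathcal{D}})$ with $h(p)=0$ and $h|_E\neq 0$. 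For $p$ lying genuinely outside $\widehat{K}_{\mathcal{R}}$ I would separate explicitly, using either the defining rational functions of the hull or the normalised entire comparison functions $z\mapsto\exp\big(\sum_{j=1}^{n}z_j^2\big)$ already exploited in Proposition \ref{both_hull_same} and in the alternative proof of Theorem \ref{JP2theorem}; the delicate points are those sitting just outside the cap $E$, whose separating varieties must tuck underneath the cap into the region controlled by condition (ii). For these I would combine the varieties $V_p$ with the maximum-modulus principle on subvarieties (Lemma \ref{key_max_lemma_var}), exactly as in Step I of Theorem \ref{1st_theo}, to certify that the associated function does not vanish on $E$. Granting $\mathcal{R}(\overline{\mathcal{D}})$-convexity of $E$, all hypotheses of Theorem \ref{stout_theo}(ii) hold, and it delivers holomorphic extension of every $f\in\mathcal{O}(T)$ together with the identification $\mathcal{E}(T)=\mathcal{D}=D\setminus\widehat{K}_{\mathcal{R}}$.

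The hardest part, and the step I would scrutinise most, is precisely the passage in Step II from the \emph{analytic} (not algebraic) varieties furnished by condition (ii) to the ``holomorphic near $\overline{\mathcal{D}}$'' functional criterion of $\mathcal{R}(\overline{\mathcal{D}})$-convexity; this is exactly the juncture at which the restriction $n\geq 3$ becomes unavoidable, through Stout's codimension-one criterion. As a final consistency check I would recover Theorem \ref{JP2theorem}: for $X,Y$ balls and $K_x$ a ball, Proposition \ref{both_hull_same} gives $\widehat{K}_{\mathcal{R}}=\widehat{K}$ with the explicit description, its boundary is Levi-flat so (i) holds, and the level sets of $\exp\big(\sum_{j=1}^{n}z_j^2\big)$ supply the varieties required in (ii), so that Theorem \ref{3rd_theo} specialises to the Jarnicki--Pflug envelope.
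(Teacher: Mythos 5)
Your overall architecture coincides with the paper's: take $\Omega=D\setminus\widehat{K}_{\mathcal{R}}$ as the candidate envelope, obtain pseudoconvexity from the Levi-flatness in condition (i) (your Sard/Behnke--Stein smoothing is, if anything, more careful than the paper, which simply reads pseudoconvexity off the boundary decomposition $b\Omega=(b\widehat{K}_{\mathcal{R}}\setminus N)\cup(bD\setminus\widehat{K}_{\mathcal{R}})$), check connectedness and the neighborhood condition, and feed everything into Theorem \ref{stout_theo}(ii). Up to that point the two arguments agree.

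The genuine gap is exactly where you said you would scrutinise, and it is a missing theorem rather than a missing computation. The paper's Step II proves (Claim \ref{claim_var_dim_n-1}) that $E$ is convex with respect to purely $(n-1)$-dimensional subvarieties of $\C^n$ in the sense of Definition \ref{var_cvx}, by a three-case analysis of $\C^n\setminus E=(\overline{D}\setminus\widehat{K}_{\mathcal{R}})\cup(\C^n\setminus\overline{D})\cup N$: points off the hull are handled by the polynomials defining $\widehat{K}_{\mathcal{R}}$; points of $N$ by hypothesis (ii); and points of $\C^n\setminus\overline{D}$ --- a case absent from your proposal --- by a Hahn--Banach separating functional $\ell$ and the zero locus of $e^{\langle z-z_0,v_\ell\rangle}-1$. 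It then invokes Stout's theorem (\cite[p.~164]{stout}) that for compact subsets of $\C^n$ convexity with respect to $(n-1)$-dimensional varieties \emph{coincides} with rational convexity, and rational convexity gives $\mathcal{R}(\overline{\Omega})$-convexity by the remark following Definition \ref{ODRD-cvx}. Your substitute --- combining the varieties $V_p$ with the maximum-modulus principle on subvarieties ``to certify that the associated function does not vanish on $E$'' --- is not yet an argument: condition (ii) hands you an analytic variety, not a function in $\mathcal{O}(\overline{\Omega})$, and the passage from the one to the other is precisely the content of Stout's equivalence (second Cousin problem on $\C^n$ plus approximation). Without citing or reproving it, Step II does not close. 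A secondary confusion: the points you describe as ``trapped behind $E$'' are the points of $N$, and these do not lie in $\overline{\Omega}$ at all, so they are irrelevant to the relative criterion of Definition \ref{ODRD-cvx}; they matter only for the \emph{global} rational convexity of $E$ that the paper actually establishes, and it is there --- not among the points of $\overline{\Omega}\setminus E$ --- that hypothesis (ii) is consumed.
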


\begin{proof}
As earlier, the central idea of the proof follows from Stout's theorem on analytic continuation with the only difference of applying the idea that relies on the rational convexity of $E$.
Denote $\Omega=D\setminus \widehat{K}_{\mathcal{R}}$, and note that $\Omega$ is pseudoconvex. This is due to that the boundary $b \Omega= (b\widehat{K}_{\mathcal{R}}\setminus N) \cup (b D \setminus \widehat{K}_{\mathcal{R}})$, and that the boundaries mentioned in the first and second sets in the union are respectively Levi-flat and pseudoconvex. Therefore $\Omega$ is a bounded domain of holomorphy in $\C^n.$ Next consider the following claim which suggests that our assumptions not only ensure the $\mathcal{R}(\overline{D})$-convexity of $E$ for all dimensions $n\geq 2$, but also confirm the convexity with respect to $(n-1)$-dimensional varieties (Definition \ref{var_cvx}). 
	
	\begin{claim}\label{claim_var_dim_n-1}
	$E$ is convex with respect to varieties of dimension $(n-1).$
	\end{claim}
	
	\begin{claimproof}
The set $$\C^n\setminus E= (\overline{D}\setminus \widehat{K}_{\mathcal{R}}) \cup (\C^n\setminus \overline{D})\cup N.$$ We need to show that any point $p\in \C^n\setminus E$ lies within a $(n-1)$-dimensional variety $V_p$ that doesn't intersect $E.$ Due to hypothesis (ii), we only need to show the argument for the remaining two cases. \\

  \noindent\textbf{(i) Points in  $\overline{D}\setminus \widehat{K}_{\mathcal{R}}$:} 
  Due to the rational convexity of $\widehat{K}_{\mathcal{R}}$, every point $p\in  \overline{D}\setminus \widehat{K}_{\mathcal{R}}$ lies on an analytic variety $\mathcal{V}_p$ (zero locus of some holomorphic polynomial) of pure dimension $(n-1)$ 
 such that $\mathcal{V}_p\cap K=\emptyset$. It is even true that $\mathcal{V}_p\cap \widehat{K}_{\mathcal{R}}=\emptyset.$ Otherwise if some $z'\in \mathcal{V}_p\cap \widehat{K}_{\mathcal{R}}$, then due to $\mathcal{V}_p\cap K=\emptyset$, it yields (by definition of rational convexity) that $z'\notin \widehat{K}_{\mathcal{R}}$, a contradiction. Since $E\subset \widehat{K}_{\mathcal{R}}$ it implies $\mathcal{V}_p\cap E=\emptyset$. \\

 \noindent\textbf{(ii) Points in  $\C^n \setminus \overline{D}$ :}  Lets assume $p=x_0+iy_0\in \C^n\setminus \overline{D}=\C^n\setminus (\overline{X}+i\overline{Y}).$ Then $p\in \big((\R^n\setminus \overline{X}) +i\R^n \big)\cup \big(\R^n+i(\R^n\setminus \overline{Y})\big).$ Suppose $p\in (\R^n\setminus \overline{X}) +i\R^n$. Note that due to $\overline{X}$ being convex and compact, and $x_0\notin \overline{X}$, by Hahn-Banach seperation theorem (see \cite[Theorem 3.7]{rob}) for any point $x_0\notin \overline{X}$ there is a real hyperplane that strongly seperates $\overline{X}$ from $x_0$. In other words, there is a linear functional $\ell:\R^n \to \R$ such that $\ell(x_0)=0$ and $\ell(x)<\ell(x_0)-\epsilon$ for all $x\in \overline{X}$, and for some $\eps>0$. Due to Riesz's representation theorem (\cite[p. 351, Th 13.32]{roman}) for any continuous linear functional $\ell$ on a Hilbert space $\R^n$ there is a unique vector $v_{\ell}\in \R^n$ such that $\ell(x)=\langle x,v_{\ell}\rangle$ for all $x\in \R^n.$ Next consider the $(n-1)$-dimensional variety $V_p^h:=\{z\in \C^n: h(z)=0\}$, which is the zero locus of the following entire function $$h(z):= e^{\langle z-z_0, v_{\ell}\rangle}-1 \text{ for all } z\in \C^n.$$
  Clearly $h(z_0)=0$, and thus $z_0\in V_p^h$.  Moreover for all ${z=x+iy \in \overline{X}+i\R^n}$ it implies $$|h(z)|\geq \big||e^{\langle z-z_0, v_{\ell}\rangle}|-1\big| =\big|e^{\langle x-x_0, v_{\ell}\rangle}-1\big|= \big|e^{\ell(x)-\ell(x_0)}-1\big|>0,$$
 due to that the quantity $e^{\ell(x)-\ell(x_0)}<e^{-\eps}<1.$
  This shows $V_p^h\cap \overline{D}=\emptyset$ in this case, and hence $V_p^h\cap E=\emptyset$. The case when $p\in \R^n+i(\R^n\setminus \overline{Y})$ follows similarly by considering $\tilde{\ell}, v_{\tilde{\ell}}$ satisfying $\tilde{\ell}(y_0)=0$ and $\tilde{\ell}(y)<\tilde{\ell}(y_0)-\epsilon$ for all $y\in \overline{Y}$. Moreover, $\tilde{\ell}(y)=\langle y,v_{\tilde{\ell}}\rangle$  for all $y\in \R^n$. Finally, choose the zero locus of the entire holomorphic function $H(z):= e^{\langle -iz+iz_0,v_{\tilde{\ell}}\rangle}-1 \text{ for all } z\in \C^n$ to get the desired conclusion.\\
  
  Hence by Definition \ref{var_cvx} the proof of Claim \ref{claim_var_dim_n-1} follows.
\end{claimproof}

\vspace{.5cm}
Next note that in $\C^n$ the notion of convexity with respect to varieties
of dimension $(n-1)$ coincides with the notion of rational convexity (see \cite[Page 164]{stout}). Observe that $T$ becomes an open set such that $T \cup (b\Omega \setminus E)$ forms a neighborhood of $ b\Omega\setminus E$ in $\overline{\Omega}$, and that $E$ is $\mathcal{R}(\overline{\Omega})$-convex since it is rationally convex.  Hence by Theorem \ref{stout_theo},  the result follows for $n\geq 3$.
\end{proof}

As an application to this theorem we obtain Theorem \ref{JP2theorem} for $n\geq 3$.
\begin{corollary}[Jarnicki/Pflug, \cite{JP2}]\label{jp_corollary_2norm}
	In $\C^n$ with $n\geq 3$, let $r_1,r_2, r_3$ be real numbers such that $r_1<r_2$ and $r_1\neq r_3$; then the envelope of holomorphy of the domain $$\{x \in \R^n: r_1<||x||<r_2\}+i\{y\in \R^n:||y||<r_3\}$$ is given by $$\{z=x+iy\in \C^n: ||x||<r_2, ||y||<r_3,  ||y||^2<||x||^2-(r_1^2-r_3^2)\}.$$ In particular, the envelope is schlicht.
\end{corollary}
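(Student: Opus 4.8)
The plan is to deduce this corollary directly from Theorem \ref{3rd_theo}. I would set $X=\{x\in\R^n:||x||<r_2\}$ and $Y=\{y\in\R^n:||y||<r_3\}$, which are bounded convex domains, and take $K_x=\{x\in\R^n:||x||\le r_1\}$, a compact convex set contained in $X$ since $r_1<r_2$. With $D=X+iY$ and $K=K_x+ibY$, the truncated tube $T:=(X\setminus K_x)+iY$ is exactly the domain in the statement. The first step is to record the rational hull: by Proposition \ref{both_hull_same} one has
$$\widehat{K}_{\mathcal{R}}=\widehat{K}=\{x+iy\in\C^n:||y||\le r_3,\ ||x||^2-||y||^2\le r_1^2-r_3^2\}.$$
It then only remains to check hypotheses (i) and (ii) of Theorem \ref{3rd_theo}, after which the theorem yields $\mathcal{E}(T)=D\setminus\widehat{K}_{\mathcal{R}}$; a direct computation gives $D\setminus\widehat{K}_{\mathcal{R}}=\{||x||<r_2,\ ||y||<r_3,\ ||y||^2<||x||^2-(r_1^2-r_3^2)\}$, which is the claimed envelope, and schlichtness is part of the conclusion of Theorem \ref{3rd_theo}.

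For hypothesis (i), I would observe that inside the open set $D$ the constraint $||y||\le r_3$ is inactive, so $b\widehat{K}_{\mathcal{R}}\cap D=\{x+iy\in D:||x||^2-||y||^2=r_1^2-r_3^2\}$ is a level set of $\textsf{Re}\big(\sum_{j=1}^{n}z_j^2\big)$, the real part of an entire function; hence, away from its singular locus, it is Levi-flat by the same argument as in Step I of Theorem \ref{1st_theo} (the Laplacian of a pluriharmonic function vanishes, so the associated Levi form vanishes, and Theorem \ref{foli} applies). The key point, and precisely where the hypothesis $r_1\neq r_3$ enters, is smoothness: the gradient of $x+iy\mapsto ||x||^2-||y||^2$ vanishes only at the origin, and the origin lies on this level set exactly when $r_1^2-r_3^2=0$. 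Since $r_1\neq r_3$, the level set contains no critical point, so $b\widehat{K}_{\mathcal{R}}\cap D$ is a real-analytic (in particular $\mathcal{C}^2$) Levi-flat hypersurface, establishing (i).

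For hypothesis (ii), I would first pin down $N:=\textsf{int}_{\overline{D}}(\widehat{K}_{\mathcal{R}})$ and $E:=b\widehat{K}_{\mathcal{R}}\setminus N$. Examining the two boundary pieces of $\widehat{K}_{\mathcal{R}}$, namely the ``outer'' piece $\{||x||^2-||y||^2=r_1^2-r_3^2\}$ and the ``cap'' $\{||y||=r_3,\ ||x||\le r_1\}$, one checks that every boundary point with $||x||^2-||y||^2<r_1^2-r_3^2$ admits a $\overline{D}$-neighborhood inside the hull and so lies in $N$; consequently $E$ is contained entirely in the single level set $\{||x||^2-||y||^2=r_1^2-r_3^2\}$. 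Now fix $p=x_0+iy_0\in N$, so that $||x_0||^2-||y_0||^2<r_1^2-r_3^2$, and take
$$V_p:=\Big\{z\in\C^n:\sum_{j=1}^{n}z_j^2=\sum_{j=1}^{n}p_j^2\Big\},$$
the zero locus of a nonconstant polynomial, hence a purely $(n-1)$-dimensional variety through $p$. On $V_p$ the quantity $||x||^2-||y||^2=\textsf{Re}\big(\sum_{j=1}^{n}z_j^2\big)$ is constantly equal to $||x_0||^2-||y_0||^2<r_1^2-r_3^2$, so $V_p$ never attains the level $r_1^2-r_3^2$ and therefore $V_p\cap E=\emptyset$. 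This verifies (ii), equivalently the convexity of $E$ with respect to $(n-1)$-dimensional varieties in the sense of Definition \ref{var_cvx}.

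I expect the main obstacle to be the bookkeeping in hypothesis (ii): correctly separating $N$ from $E$ and confirming that $E$ sits inside the single level hypersurface $\{||x||^2-||y||^2=r_1^2-r_3^2\}$, which is exactly what forces the level varieties $V_p$ through the strictly lower-level points of $N$ to miss $E$. The role of $r_1\neq r_3$ in (i) is the other delicate point, as it is precisely the condition keeping the unique critical point (the origin) off the relevant level set and thereby guaranteeing the $\mathcal{C}^2$ Levi-flatness demanded by Theorem \ref{3rd_theo}.
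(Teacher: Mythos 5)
Your proposal is correct and follows essentially the same route as the paper's proof: both invoke Proposition \ref{both_hull_same} to identify $\widehat{K}_{\mathcal{R}}=\widehat{K}$, verify hypothesis (i) of Theorem \ref{3rd_theo} by noting that the only critical point of $||x||^2-||y||^2$ is the origin, which is excluded from the level set by $r_1\neq r_3$, and verify hypothesis (ii) using the same level varieties $V_p=\{\sum_j z_j^2=\sum_j p_j^2\}$ together with the observation that $E$ sits in the level $r_1^2-r_3^2$ while points of $N$ lie strictly below it. No substantive differences.
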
	
\begin{proof}
Put $X=\{x\in \R^n: ||x||<r_2\}$, $Y=\{y\in \R^n: ||y||<r_3\}$, $D=X+iY$, and the set ${K_x=\{x\in \R^n: ||x||\leq r_1\}}$ in Theorem \ref{3rd_theo}. 
Due to Proposition \ref{both_hull_same} it implies $\widehat{K}=\widehat{K}_{\mathcal{R}}.$ Therefore $E:=b\widehat{K}_{\mathcal{R}}\setminus \textsf{int}_{\overline{D}}(\widehat{K}_{\mathcal{R}})$ is given by
$$\{x+iy\in \overline{D}:||x||^2-||y||^2= (r_1^2-r_3^2)\}\setminus \{x+iy\in \overline{D}:||x||<r_1, ||y||=r_3\}.$$ The only possible  singularity of the function ${||x||^2-||y||^2-(r_1^2-r_3^2)}$ is at the origin which doesn't lie on $b\widehat{K}_{\mathcal{R}}\cap D$ since $r_1- r_3\neq0$. This proves that the set $b\widehat{K}_{\mathcal{R}}\cap D$ is a Levi-flat hypersurface of class $\mathcal{C}^2$ in $\C^n$, and hence yields condition (i) of Theorem \ref{3rd_theo}. For condition (ii), note that every $p=a+ib\in N:= \textsf{int}_{\overline{D}}(\widehat{K}_{\mathcal{R}})$ lies on the $(n-1)$-dimensional analytic variety $$V_{p}:=\Big\{\sum_{j=1}^{n}z_j^2=(||a||^2-||b||^2)+i\textsf{Im}(F(p))\Big\},$$ where $F(z)=\sum_{j=1}^{n}z_j^2$ (see \cite[Example 4.4.(a)]{haz-por}, or [Step I, Theorem \ref{1st_theo}] for details). If there is any $\tilde{z}=\tilde{x}+i\tilde{y}\in V_{p}\cap E$, then due to $p=a+ib\in N$, it implies
$$r_1^2-r_3^2=||\tilde{x}||^2-||\tilde{y}||^2=||a||^2-||b||^2<r_1^2-r_3^2.$$Therefore it yields $V_{p}\cap E=\emptyset,$ and the result follows.
\end{proof}


\section{Open questions}\label{sec_7}

In conclusion, the results proved earlier encourage us to list some questions for future research, which remain open to the best of our knowledge. Throughout, we assume $n\geq 2,$ unless specified otherwise.

\begin{ques}\label{open_qs_1}
Let $S_{\alpha}=(X+iY)\setminus K_{\alpha}$ be the special domain with $Y\subset \R^n$ be any convex domain as in Definition \ref{special_domain}. 
\begin{itemize}
\item [(i)] What sufficient conditions on $Y$ ensure the schlicht envelope of holomorphy $\mathcal{E}(S_{\alpha})$ of
$S_{\alpha}$?
\vspace{0.2cm}

\item [(ii)] What is the description of $\mathcal{E}(S_{\alpha})$ in this case?

\vspace{0.2cm}
\item[(iii)] Is there an example of a good barrier $\alpha$ that is not a special barrier, such that Theorem \ref{1st_theo} fails?
\end{itemize}

\end{ques}
From our discussions earlier, note that parts (i) and (ii) of Question \ref{open_qs_1} already have positive answers when $Y=B_R(0)$ (see Theorem \ref{1st_theo}). It might be interesting to see whether a construction analogous to \cite[Example 4.3]{haz-por} could be used to solve part (iii).

\begin{ques}[see Remark 3 in \cite{JP2}]
Can a result analogous to Corollary \ref{jp_corollary_2norm} be proved for the truncated tube domain $$\{x \in \R^n: r_1<||x||_4<r_2\}+i\{y\in \R^n:||y||_4<r_3\}?$$
\end{ques}
It might be interesting to understand the situation of Theorem \ref{3rd_theo} for $n=2$ so that it also generalize Theorem \ref{JP2theorem}. Moreover, one may ask whether Theorem \ref{3rd_theo} still remains valid when the set $b\widehat{K}_{\mathcal{R}}\cap D$ is not Levi-flat hypersurface of class $\mathcal{C}^2$ in $\C^n$.

\begin{ques}\label{open_qs_3}
Let $X$ and $Y$ be two convex domains in $\R^n$, and let $K\subset \C^n$ be compact. Under what geometric condition on $K$ does the envelope of holomorphy $\mathcal{E}((X+iY)\setminus K)$ become schlicht?
\end{ques}
Question \ref{open_qs_3} has a positive answer in $\C^2$ (Theorem \ref{haz-por_sch_2}),
when $K=K_x+i\overline{Y}$, where $K_x\subset X\subset \R^2$ is strictly convex (see also Theorem \ref{schlicht2d}). It might be interesting to study the same question under a few additional assumptions, for instance, when $K\cap (bX+i\overline{Y})=\emptyset$ or $K\cap (\R^n+i\{y\})$ is convex for all $y\in \overline{Y}.$\\

\textbf{Acknowledgements:} The author would sincerely like to thank Professor Egmont Porten
for several valuable remarks, suggestions, and fruitful discussions that greatly helped to ameliorate the quality of the article. The author would also like to acknowledge anonymous referees for the useful comments and suggestions provided.

\end{document}